\documentclass{article}
\usepackage{graphicx}
\usepackage{circuitikz}
\usepackage{amssymb}
\usepackage{answers}
\usepackage{setspace}
\usepackage{graphicx}
\usepackage{multicol}
\usepackage{mathrsfs}
\usepackage{hyperref}
\usepackage{amsmath,amsthm,amssymb}
\usepackage{tikz}
\usepackage{quiver}
\usepackage{natbib}
\usepackage{array, booktabs}

\usetikzlibrary {arrows.meta,bending,positioning, fit}

 \newcommand{\bbM}{\mathbb{M}}
 
 \newcommand{\bbQ}{\mathbb{Q}}

 \newcommand{\calT}{\mathcal{T}}
 
 \newcommand{\calP}{\mathcal{P}}

 \newcommand{\calC}{\mathcal{C}}

\newcommand{\calI}{\mathcal{I}}
\newcommand{\calJ}{\mathcal{J}}

\newcommand{\tp}{\text{tp}}

\newcommand{\qftp}{\text{qftp}}

\newcommand{\TP}{\text{TP}}
\newcommand{\NSOP}{\text{NSOP}}

\newcommand{\SOP}{\text{SOP}}
\newcommand{\SOPn}{$\text{SOP}_n$}
\newcommand{\TPi}{\text{TP}_1}
\newcommand{\TPii}{\text{TP}_2}

\newcommand{\IFF}{\enspace\text{if and only if}\enspace}

 \newcommand{\vphi}{\varphi}

 \newtheorem{claim}{Claim}
 \newtheorem{question}{Question}
 \newtheorem{fact}{Fact}
 \newtheorem{theorem}{Theorem}

\newtheorem{corollary}{Corollary}
\newtheorem{proposition}{Proposition}

\theoremstyle{definition}
\newtheorem{definition}{Definition}

\theoremstyle{definition}

\theoremstyle{definition}
 \newtheorem{remark}{Remark}

\def\Ind#1#2{#1\setbox0=\hbox{$#1x$}\kern\wd0\hbox to 0pt{\hss$#1\mid$\hss}
\lower.9\ht0\hbox to 0pt{\hss$#1\smile$\hss}\kern\wd0}

\def\ind{\mathop{\mathpalette\Ind{}}}

\def\notind#1#2{#1\setbox0=\hbox{$#1x$}\kern\wd0
\hbox to 0pt{\mathchardef\nn=12854\hss$#1\nn$\kern1.4\wd0\hss}
\hbox to 0pt{\hss$#1\mid$\hss}\lower.9\ht0 \hbox to 0pt{\hss$#1\smile$\hss}\kern\wd0}

\title{On the notion of a patterning property in model theory}
\author{Gabriel Day and Scott Mutchnik\footnote{Scott Mutchnik was supported by the NSF under Grant No. DMS-2303034.}}
\date{June 2026}

\begin{document}

\maketitle

\begin{abstract}

Different kinds of definable patterns in the models of a first-order theory, such as the order property, the tree property, or the ($n$-)strict order property, allow us to distinguish theories according to their logical complexity. The complexity distinctions given by these definable patterns play a central role in model theory. However, a rigorous definition of the notion of a model-theoretic patterning property has yet to be established.

We start by discussing different possible proposals from the literature for making the notion of a model-theoretic patterning property rigorous. Some examples will include the straight definability of \cite{shelah_what_2000}, which will describe properties definable by a pattern of consistency and inconsistency in a formula and its negation, and the poset definability of \cite{garcia2022model}, covering properties definable by interpreting a partial order embedding a given poset. We will also introduce a higher-arity version of straight definability.

In our first main result, we will answer open questions of \cite{bailetti2024walk} and \cite{garcia2022model}, showing that the $n$-strict order property $\mathrm{SOP}_{n}$ is straightly definable and poset definable even for integers $n \geq 4$. This will complete the categorization of all of the classical classification-theoretic properties as straightly definable.

Our other main result will concern properties that are straightly definable without negation: the positively straightly definable properties defined by \cite{bailetti2024walk}. We will show using Saracino's theorem and results of \cite{bodirsky2025taking} that, in any countably categorical theory, implications between positively straightly definable properties must be exhibited at the level of $\exists\forall$-formulas. This will have special consequences under the assumption that $\mathrm{SOP}_{2}$ is equal to $\mathrm{SOP}_{3}$.
    
\end{abstract}

\section{Introduction}

One of the major research programs in model theory is to identify and analyze complexity distinctions between first-order theories. These theories are formal descriptions of arbitrary mathematical structures, such as the real or complex numbers as a ring, the set-theoretic universe, and the Rado random graph. Early efforts in this subfield, \textit{classification theory}, concentrated on characterizing the tamer theories as those with smaller, more organized model classes.\footnote{Though ``simple" is the natural antonym to ``complex," its well-established use as a technical term in model theory, referring to theories which do not have the tree property, leads us to use ``tame" as the loose antonym for complex in this paper. This usage has a long history in model theory. 
} These investigations were quite successful, leading to research achievements such as Shelah's Main Gap Theorem and the isolation of stability as a central model-theoretic property.

However, classification theory has continued long past these early successes. Model theorists have continued to identify properties which, in some sense, separate tame from complex theories. But the underlying notion of complexity at play is not just that between theories with a few ``well-organized" models and those with a large, unclassifiable model class. A common feature to most of the distinctions identified to date is that there is some kind of definable structural pattern appearing in the models of complex theories which never appear in the models of tame theories.\footnote{See \cite{day2025complexity} for an explanation of the conceptions of complexity at play in classification theory, in particular distinguishing the classifiability and patterning senses of complexity.} In particular, these patterns are defined by a particular formula and are thus ``local" properties. The Order Property (OP), Independence Property (IP), and Tree Property (TP) are paradigm instances of such \textit{model-theoretic patterning properties}: properties of theories which indicate that they are complex in virtue of their instantiation of a specific definable structural pattern.


The abundance of these patterning properties leads naturally to the question of what a model-theoretic pattern is. Though there are many clear instances of patterning properties, like OP, IP, and TP, there has been, to this point, no universally agreed upon definition which characterizes all instances of such properties. In this paper, we catalogue several notions which have been used to characterize model-theoretic patterning properties, and evaluate their suitability as universal definitions of this notion. We support our evaluation of this question with several new results, including results answering open problems regarding the scope of these definitions of patterning.

Our central results show that all of the $n$-Strong Order Properties (\SOPn) can be characterized by two different notions of patterning property: that of a \textit{straightly definable property} and a \textit{poset definable property}. This answers open problems of Bailetti (\cite{bailetti2024walk}) and Garcia and Mennuni (\cite{garcia2022model}), respectively. Both of these notions offer plausible definitions of ``patterning property," though some notable open questions remain (see Section \ref{open questions}). 
In addition to settling the open questions on \SOPn, we offer a natural generalization of straight definability which accommodates higher arity patterning properties, such as $\text{IP}_k$ and $\text{FOP}_k$.

In addition to helping answer a fascinating conceptual question into the nature of model-theoretic patterning properties, studying these properties through a common definition allows for the development of results and common techniques which apply to a wide collection of patterning properties. Some of the central open problems of classification theory ask to determine, for particular straightly definable properties, whether those properties are equal or not. Among the subclass of \textit{positively straightly definable properties}, perhaps the most important question of this kind that still remains open (due to \cite{Dzamonja2004}, and mentioned as early as 1999 or 1997; see \cite{shelah_what_2000}), after one of the authors positively resolved the question of whether $\mathrm{SOP}_{1}$ is equal to $\mathrm{SOP}_{2}$ (\cite{Mutchnik2026nsop2}), is whether $\mathrm{SOP}_{2}$ is equal to $\mathrm{SOP}_{3}$. One last goal of this paper is to demonstrate a general phenomenon among equalities of positively straightly definable properties, or more generally, implications between positively straightly definable properties. As we will remark, in the result that every theory with $\mathrm{SOP}_{1}$ has $\mathrm{SOP}_{2}$, we get some control over this implication at the level of formulas: if $\varphi(x, y)$ is a formula that exhibits $\mathrm{SOP}_{1}$, there is a $\exists \forall$-formula in $\varphi(x, y)$ that exhibits $\mathrm{SOP}_{2}$.\footnote{In fact, in this specific case, a Boolean combination of existential formulas in $\varphi(x, y)$ and universal formulas in $\varphi(x, y)$.} We show that, for implications between general positively straightly definable properties, we get a similar form of control \textit{for countably categorical theories}. That is, for $P$ and $Q$ positively straightly definable properties, if every countably categorical theory with $P$ has $Q$, then in any countably categorical theory, if $\varphi(x, y)$ is a formula that exhibits $P$, there is a $\exists \forall$-formula in $\varphi(x, y)$ that exhibits $Q$. This general phenomenon is of particular interest for many specific open problems on implications between positively straightly definable properties. For example, if the answer to whether $\mathrm{SOP}_{3}$ is equal to $\mathrm{SOP}_{2}$ is yes, then  if $\varphi(x, y)$ is a formula that exhibits $\mathrm{SOP}_{2}$, there is a $\exists \forall$-formula in $\varphi(x, y)$ that exhibits $\mathrm{SOP}_{3}$; a similar result is true for the open question, posed by Palacín (\cite{palacin2012omega}), of whether every countably categorical simple theory is low.


\subsection{Outline}

This paper aims to achieve several goals, which may be of interest to distinct groups of readers. This outline should serve to direct readers to the sections of interest for their purposes.

First, in Section \ref{Patterning properties} we offer a self-contained introduction to several important  patterning properties: OP, IP, $\SOP_n$, and TP, in addition to the higher-arity properties $\text{IP}_k$ and $\text{FOP}_k$. These properties serve as indicators for whether or not an attempt to characterize patterning properties in general succeeds. In Section \ref{Definitions of patterning}, we survey the candidate definitions for ``model-theoretic patterning property": interpreting a structure, trace defining a structure,  and straight definition. This section compiles previously-known and new results (proved in Section \ref{New results on SOPn}) to compare the adequacy of these candidate definitions. We also generalize straight definability to the higher-arity context, to encompass patterning properties like $\mathrm{IP}_{k}$ and $\mathrm{FOP}_{k}$. In Section \ref{New results on SOPn} we prove several new results about straight definability and interpretation of posets, in particular, the characterization of $\SOP_n$ via both of these definitions. Finally, in Section \ref{positive definability categoricity}, we examine implications between positively straightly definable properties, showing that they must hold at the level of $\exists\forall$-formulas in countably categorical theories, and discussing the implications for open problems such as whether $\mathrm{SOP}_{2}$ is equal to $\mathrm{SOP}_{3}$. 

Readers interested solely in our new results may skip to Sections \ref{New results on SOPn} and \ref{positive definability categoricity}
referring to previous sections only for the statements of definitions. Readers interested solely in our analysis of how to define ``model-theoretic patterning property" may look to Section \ref{Patterning properties} and \ref{Definitions of patterning}; the chart at the end of Section \ref{New results on SOPn} summarizing which notions of patterning can define which patterns may also be of interest.

\section{Patterning Properties}\label{Patterning properties}

\subsection{Binary patterns}


Before considering different candidates for definitions of what a model-theoretic property is, we first introduce some of the main classification-theoretic properties that we might refer to as patterning properties. We take it as a datum that these properties are united informally by requiring some kind of definable pattern be instantiated in models oof a theory. Specifically, each of the properties we discuss below will be defined according to some combinatorial pattern in a formula in a first-order theory. The presence of such a pattern may be thought of as indicating that the theory contains ``more structure" and is thus, in a sense, more complex. This intuitive judgment is supported by the fact that many of these properties, like OP, IP, and TP, imply that particular niceness conditions fail. We will begin with stability, whose foundational role in classification theory is demonstrated, for instance, through Shelah's proof of the Main Gap Theorem. We will then turn to further properties originated by Shelah, with the goal of classifying the more complex, unstable theories.

The \textit{order property}, of which stability is defined to be the negation, essentially says that there is a formula that linearly orders an infinite set (though the property has several equivalent definitions, or which we offer two).

\begin{definition}
A theory $T$ is said to have the \textit{Order Property}  ($\mathrm{OP}$) if there is a formula $\vphi(x,y)$ in the language of $T$ and tuples $(a_i, b_i)_{i< \omega}$ in a model $M\models T$ such that $$M\models \vphi(a_i,b_j)\enspace\text{if and only if}\enspace i<j.$$ Equivalently, there is a formula $\varphi(x, y)$ with $|x|=|y|$ and a sequence of tuples $\{b_{i}\}_{i <\omega}$ such that $$M\models \vphi(b_i,b_j)\enspace\text{if and only if}\enspace i<j.$$ A theory without the order property is said to be \textit{stable}.

\end{definition}

The following diagram depicts the first two of these equivalent definitions of OP. Solid arrows between a pair of variable tuples indicate that the formula $\vphi$ holds between that pair, while dashed arrows indicates that the negation of $\vphi$ holds between those tuples. For instance, we have $\models\vphi(a_0, b_1)$, and $\models\neg\vphi(a_1,b_0)$.


\begin{center}
\begin{tikzpicture}
    \node(b1){$b_{0}$};
    \node(b2)[right of = b1]{$b_{1}$};
    \node(dots0)[right of =b2]{$\dotsm$};
    \node(empty)[right of =dots0]{$b_i$};
    \node(dots1)[right of =empty]{$\dotsm$};
    \node(satisfied)[below of =empty]{};
    \node (ai)[below of =satisfied]{$a_i$};
    \node (adots1)[left of =ai]{$\dotsm$};
    \node (adots2)[right of =ai]{$\dotsm$};
    \node (a2)[left of =adots1]{$a_1$};
    \node (a1)[left of =a2]{$a_0$};
    \draw[thick, ->] (a1) -- (b2);
    \draw[thick, ->] (a1) -- (empty);
    \draw[thick, ->] (a2) -- (empty);
    \draw[thick, dashed, ->](a2) -- (b1);
    \draw[thick, dashed, ->](ai) -- (b2);
    \draw[thick, dashed, ->](ai) -- (b1);
     
\end{tikzpicture}
\end{center}

The independence property is a natural extension of the order property. While the order property gives a formula selecting the initial segments of some infinite set, the independence property amounts to a combinatorial instance of randomness within a formula, giving a formula selecting \textit{all} of the subsets of some infinite set.

\begin{definition}
A theory $T$ has the \textit{independence property} (IP) if there is a formula $\vphi(x, y)$ in the language of $T$ with the following property: There is a model $M\models T$ such that there are tuples $\{a_i\colon i< \omega\}$ and $\{b_{s}\colon s\subseteq \omega\}$ from $M$ such that \[M\models \vphi(a_i, b_{s})\enspace\text{if and only if} \enspace i\in s.\] 

A theory without $\mathrm{IP}$ is said to be \textit{dependent} or $\mathrm{NIP}$: it satisfies the \textit{negation of the independence property}.



\end{definition}



In the following diagram, as in the previous one, solid lines connect pairs where $\vphi$ holds, while dashed lines indicate pairs where $\vphi$ fails to hold. These encode, respectively, when the indices of $a_i$ tuples are elements of the sets coded by indices of $b_s$ tuples, and when they are not elements of those sets. For instance, since $0\in\{0,i\}$, there is a solid arrow from $a_0$ to $b_{\{0,i\}}$. Meanwhile, since the empty set contains no elements, $1\notin \emptyset$, so there is a dashed arrow from $a_1$ to $b_\emptyset$.

\begin{center}
\begin{tikzpicture}
    \node(b1){$b_{\emptyset}$};
    \node(dots0)[right of =b1]{$\dotsm$};
    \node(empty)[right of =dots0]{$b_{\{0, i \}}$};
    \node(dots1)[right of =empty]{$\dotsm$};
    \node(bi)[right of =dots1]{$b_s$};
    \node (dots2)[right of =bi]{$\dotsm$};
    \node(satisfied)[above of =dots1]{};
    \node (ai)[above of =satisfied]{$a_i$};
    \node (adots1)[left of =ai]{$\dotsm$};
    \node (adots2)[right of =ai]{$\dotsm$};
    \node (a2)[left of =adots1]{$a_1$};
    \node (a1)[left of =a2]{$a_0$};
    \draw[thick, ->] (ai) -- (empty);
    \draw[thick, ->] (a1) -- (empty);
    \draw[dashed, ->](a2) -- (b1);
    \draw[dashed, ->](ai) -- (b1);
    \draw[dashed, ->](a1) -- (b1);

\end{tikzpicture}
\end{center}

Extending the order property in different way, the \textit{strict order property} asserts the existence of a formula defining a family of sets with an ascending chain under strict inclusion. As we will see in Section \ref{interpretation section}, this is equivalent to interpreting a partially ordered set with an infinite chain.

\begin{definition}
    A theory has the \textit{strict order property} (SOP) if there is a formula $\vphi(x,y)$ and a sequence of parameters $(b_i)_{i<\omega}$ such that, for each $i,j<\omega$, \[i<j \IFF \models \exists x(\vphi(x,b_{j})\land\neg\vphi(x,b_i)).\] Otherwise it is $\mathrm{NSOP}$.

    We can visualize some of the sets $\varphi(M, b_{i})$ defined by the formulas $\varphi(x, b_{i})$ as a series of expanding boxes, with $\varphi(M,b_i) \subsetneq \varphi(M,b_j) \iff i < j$.\footnote{The following illustration is drawn from John Baldwin's \textit{Fundamentals of Stability Theory}, \cite{baldwin1988fundamentals}, p90.}
\end{definition}

\includegraphics[scale=.65]{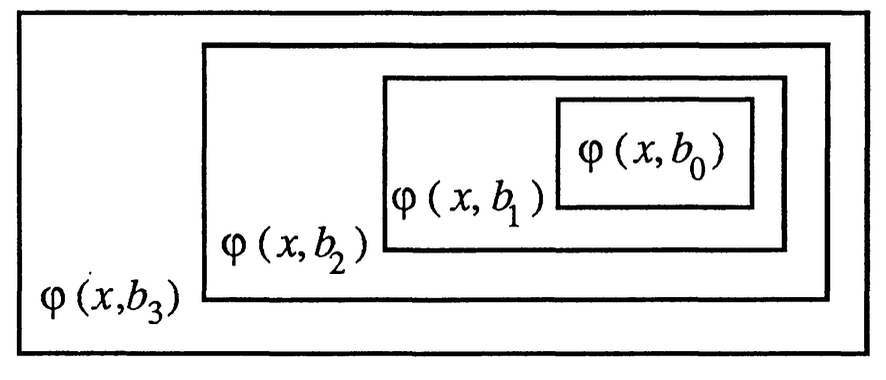}

Note that each of these properties is defined for a binary partitioned formula with two variable tuples $x$ and $y$; these may be labeled \textit{binary patterns}. We will see in Section \ref{higher arity pattern definitions section} that some patternings are defined using formulas with more variable partitions. But first, we will include the definitions of two more (classes of) binary patterns, the first of which, \SOPn, is the subject of our new results in Section \ref{New results on SOPn}, and the second of which, the tree property $\mathrm{TP}$, is the subject of a notable open question about one candidate patterning notion (poset definability), and rules out another (trace definability) as a universal definition.

\begin{definition}\label{definition of sopn}
    Let $n\geq 3$.
    A theory $T$ is said to have the $n$\textit{-Strong Order Property} (\SOPn) if there is a formula $\vphi(x,y)$ with $x$ and $y$ tuples of equal length, and a model $M\models T$ such that

    \begin{enumerate}
        \item The set of formulas $\{\vphi(x_1,x_2), \dotsm, \vphi(x_{n-1},x_n),\vphi(x_n,x_1)\}$
        is inconsistent with $T$;

        \item There is a sequence $(a_i)_{i<\omega}$ from $M$ such that, for all $i<j$, we have $M \models \vphi(a_i,a_j)$.
    \end{enumerate}

   If $T$ does not have $\SOP_n$, then it is said to be $\NSOP_n$.
\end{definition}

We will also give the definitions of $\mathrm{SOP}_{1}$ and $\mathrm{SOP}_{2}$, which are defined differently, in section 5 below.

As the name suggests, each $\mathrm{SOP}_n$ is a strengthening
of OP. A formula witnesses $\mathrm{SOP}_n$ for $T$ along an infinite sequence of elements which are ``linearly ordered" by $\vphi$, just as a formula witnessing OP does.\footnote{The definition of $\mathrm{SOP}_{n}$ includes the requirement that if $i<j$, then $M\models\vphi(a_i,a_j)$, as is required for OP. OP also requires, conversely, that if $i>j$, then $M\models \neg \vphi(a_i,a_j)$. While not stated immediately in the definition of $\SOP_n$, we may assume $\{a_{i}\}_{i < \omega}$ is indiscernible, in which case this property follows from it: if $i>j$ and $M\models \vphi(a_i,a_j)$, then the anti-looping clause (1) would be violated, since we may assume $i = j + (n-1)$, and then $\{\vphi(a_i,a_{i -1}), \dotsm, \vphi(a_{i- k},a_{i - (k + 1)}), \ldots, \vphi(a_j,a_i)\}$ would be satisfied by $M$.} But in addition, clause (1) guarantees that the relation defined by $\vphi$ cannot ``loop back" on itself in $n$ many steps.

The final binary patterning property we will mention, the Tree Property (TP), notably uses a slightly different definitional framework from those above. While OP, IP, and $\mathrm{SOP}_n$ are equivalent to the existence of a definable binary relation with certain properties on a specified set of elements, TP asserts that a particular family of definable sets exists, with a specific ``branching" structure reflecting the relation of a mathematical tree.

In order to define the tree property, we must first offer some notation and definitions for trees. A \textit{tree} is a set $\calT$ partially ordered by a relation $\trianglelefteq$ such that, for any element (or node) $\eta\in \calT$, the set $\{\nu\in \calT\colon \nu\trianglelefteq\eta\}$ is linearly ordered by $\trianglelefteq$. It is often helpful to view trees as collections of functions between ordinals. For instance, $\omega^{<\omega}$, the set of functions $\eta\colon n\to\omega$ for all finite ordinals $n$, is a tree of countable height whose nodes each branch countably. A node in this tree, a function $\eta$, \textit{extends} another, $\nu$, if $\nu$ is the restriction $\eta|_n$ of $\eta$ to some smaller ordinal $n$. When this is the case, we write $\nu\trianglelefteq\eta$. A \textit{path} through this tree is a function $\beta\colon \omega\to\omega$; any restriction $\beta|_n$ of a path to a finite ordinal $n$ is a node in the tree. For $\eta\in \omega^{<\omega}$ and $i<\omega$, $\eta^\smallfrown i$ denotes the function from $n+1\to \omega$ which agrees with $\eta$ on all $j\leq n$, and takes value $i$ on $n+1$.

\begin{definition}\label{tree property}
    A theory $T$ has the \textit{tree property} (TP) if there is a formula $\vphi(x,y)$ in the language of $T$ and a model $M\models T$ which contains tuples $(a_\eta)_{\omega^{<\omega}}$ satisfying the following:

    \begin{enumerate}
        \item For each path $\beta\in\omega^{\omega}$, the set of formulas $\{\vphi(x,a_{\beta|_n})\colon n<\omega\}$ is consistent.

        \item There is a natural number $k$ such that for each $\eta\in\omega^{<\omega}$, any $k$ elements of the set $\{\vphi(x,a_{\eta^\smallfrown \langle i \rangle})\colon i<\omega\}$ is inconsistent.
    \end{enumerate}

A formula $\vphi$ such as this is also said to have the \textit{tree property}. A theory which doesn't have TP is called \textit{simple}. 

In the below visualization, the paths correspond to consistent sets of formulas, while the sets of immediate successors, some of which are denoted by dashed boxes, correspond to sets of formulas any $k$ elements of which are inconsistent. 

    \begin{center}
\begin{tikzpicture}
    \node(a0){$a_0$};
    \node(a1)[right of = a0]{$a_1$};
    \node(a2)[right of =a1]{$a_2$};
    \node(dots1)[right of =a2]{...};
    \node(ai)[right of = dots1]{$a_i$};
    \node (dots2)[right of =ai]{...};
    \node(a01)[above of =a0]{$a_{01}$};
    \node(a00)[left of =a01]{$a_{00}$};
    \node(dots01)[right of =a01]{...};
    \node(ai1)[above of =ai]{$a_{i1}$};
    \node(ai0)[left of =ai1]{$a_{i0}$};
    \node(dotsi1)[right of =ai1]{...};
    \draw[ultra thick](a0) -- (a00);
    \draw[black](a0) -- (a01);
    \draw[black](a0) -- (dots01);
    \draw[black](ai) -- (ai0);
    \draw[black](ai) -- (ai1);
    \draw[black](ai) -- (dotsi1);
    \node(a001)[above of =a00]{$a_{001}$};
    \node(a000)[left of =a001]{$a_{000}$};
    \node(dots001)[right of =a001]{...};
    \draw[black](a00) -- (a000);
    \draw[ultra thick](a00) -- (a001);
    \draw[black](a00) -- (dots001);
    \node(ai01)[above of =ai0]{$a_{i01}$};
    \node(ai00)[left of =ai01]{$a_{i00}$};
    \node(dotsi01)[right of =ai01]{...};
    \draw[black](ai0) -- (ai00);
    \draw[black](ai0) -- (ai01);
    \draw[black](ai0) -- (dotsi01);
    \node(vdots1)[above of =a001]{$\vdots$};
    \node(vdots2)[above of =ai01]{$\vdots$};
    \draw[ultra thick, ->](a001) -- (vdots1); 
    \node[draw,dashed,fit=(ai00) (ai01) (dotsi01)] {};
    \node[draw,dashed,fit=(ai0) (dotsi1)] {};
    
\end{tikzpicture}
\end{center}

In fact, \cite{shelah1990classification} shows that this is equivalent to the tree property with $k = 2$, which is necessary to show that the tree property is (positively) straightly definable.

\end{definition}

\subsection{Higher arity patterns}\label{higher arity pattern definitions section}

All of the patterning properties discussed so far are defined for a partitioned formula with two arguments: the variable tuples $x$ and $y$ appearing in $\vphi(x,y)$ witnessing OP, IP, TP, etc. Yet at least since \cite{shelah2007definable}, some model theorists have investigated higher arity analogues of some of these properties; i.e., properties defined for formulas with more variable tuples which may be substituted independently from one another. Thus a formula $\vphi(x,y,z)$ might witness a higher arity patterning property, where both the variables $y$ and $z$ are substituted for parameters independently of one another. 

While these higher arity properties have not garnered as much scholarly attention as the paradigmatic properties discussed above, they are subject of increasing interest in the model theory and combinatorics literature (see, for a sample, \cite{hempel2016n}, \cite{chernikov2020hypergraph}, and \cite{terry2021higher}). Thus, as we discuss candidate definitions of patterning property below, it will be a superlative on a definition's success if it can capture these higher arity properties. To this end, we offer a natural generalization of straight definability, a successful notion for capturing binary patternings, which is able to capture higher arity patterns as well.

By far the most studied higher arity pattern is the following generalization of the independence property, known as $\text{IP}_k$.

\begin{definition}\label{IPk definition}
    For $k\geq 1$, A formula $\vphi(x, y_0, \dots, y_{k-1})$ has the $k$-\textit{Independence Property}, $\text{IP}_k$, if there are sequences of tuples $(a_{0,i})_{i<\omega},\dotsm, (a_{k-1,i})_{i< \omega}$ and tuples $(b_s)_{s\subseteq\omega^k}$ such that 

    \[\models \vphi(b_s, a_{0,i_0},\dotsm, a_{k-1,i_{k-1}})\IFF (i_0, \dotsm ,i_{k-1})\in s.\]

\end{definition}

Note that $\text{IP}_1$ is precisely IP. 

While $\text{IP}_k$ has certainly gotten the most attention of any higher arity patterning property, it is not the only one being studied. In \cite{abd2025higher}, the authors Abd Aldaim, Conant, and Terry introduced the \textit{functional order property}, a higher-arity analogue of OP.

\begin{definition}\label{FOPk defn}
    For $k\geq 1$, a formula $\vphi(x, y_0,\dotsm, y_{k-1})$ has the $k$-\textit{functional order property}, $\text{FOP}_k$, if there are sequences $(a_{0,i})_{i<\omega},\dotsm, (a_{k-1,i})_{i < \omega}$ and $(b_f)_{f\colon \omega^{k-1}\to\omega}$ such that 

    \[\models \vphi (b_f, a_{0,i_0},\dotsm, a_{k-1,i_{k-1}})\IFF i_{k-1}\leq f(i_0,\dotsm, i_{k-2}).\]

\end{definition}

\section{Definitions of patterning}\label{Definitions of patterning}

We have included a suite of patterning properties, both binary and higher arity. A central question of this paper is whether there is a precise mathematical definition which is able to capture all of them, showing these patterning properties to all be members of a common species. We begin by outlining two options demonstrating partial success, namely \textit{interpretation} and \textit{trace definition}, before turning to \textit{straight definition}, the candidate with the best claim to universality.

There are really two definitions needed in order to make the notion of a patterning property precise---first, what a pattern is, and second, what it means for a theory or model to ``include" or ``be patterned by" a given pattern, as witnessed by some formula. The candidate notions of interpretation and trace definition offer definitions of patterning property reliant on the assumption that patterns are first-order structures. The final option, straight definability, relies on a definition of patterns as combinatorial rules determining the intersections of definable sets. After presenting each of these definitions, we will explore their possibilities and limitations as precisifications of the notion of a patterning property.

\subsection{Interpretation}\label{interpretation section}

A first pass definition of a pattern is simply a structure (such as a linear order or random graph), the ``inclusion" of which in other structures indicates complexity. Temporarily, we will assume that by ``pattern" we simply mean structure (until Section \ref{straight definition section}). The most classical model-theoretic notion of ``including" one structure in another (of a potentially distinct language) is interpretation. 

When an $L$-structure $M$ interprets an $L'$-structure $N$, the domain of $N$ is defined in $M^k$ by an $L$-formula, and all of the pieces of language in $L'$ which give $N$ its structure are reproduced using defining $L$-formulas. More precisely, given these assumptions, we define interpretation as follows:

\begin{definition}\label{interpretation definition}
    An \textit{interpretation} of $N$ in $M$ is a tuple $\Gamma$ consisting of three items: 
    \begin{enumerate}
        \item An $L$-formula $\vphi_D(x_0,\dotsm, x_{k-1})$ defining a set which serves as the domain of $N$ in $M^k$, for some natural number $k\geq 1$; 

        \item For each atomic formula $\sigma(y_0,\dotsm, y_{m-1})$ of $L'$, an $L$-formula $\vphi_\sigma(\overline{x_0},\dotsm, \overline{x_{m-1}})$, where each $\overline{x_i}$ is a $k$-tuple;

        \item An injective map $\tau\colon N \to \vphi_D(M^k)/E$, for $E$ the equivalence relation on $\vphi_D(M^k)$ defined by $\varphi_{y_1=y_2}(\overline{x_1}, \overline{x_2})$. 
    \end{enumerate} 

These items must be such that, for each atomic $L'$-formula $\sigma$ and for all $a_0,\dotsm, a_{n-1}\in N$, $N\models \sigma(a_0,\dotsm, a_{n-1}) \IFF M\models \vphi_\sigma(\tilde{\tau(a_0)},\dotsm,\tilde{\tau(a_{n-1})})$, for $\tilde{\tau(a_{i})}$ any choice of representative of $\tau(a_{i})$.
\end{definition} 


At first glance, interpretation may appear to be an ineffective definition for patterning properties. OP, for instance, is not equivalent to interpreting an infinite linear order, because OP does not require that the domain set of tuples which are ``linearly ordered" by the formula witnessing OP be definable---the definition simply asserts that such a set exists. The random graph demonstrates this distinction, since it has OP, yet does not interpret an infinite linear order, since no domain set is definable. However, more nuanced uses of interpretation do capture many paradigmatic patterning properties.

In \cite{garcia2022model}, Garcia and Mennuni showed that many patterning properties can be characterized by interpretation, and in particular, by interpreting posets. A \textit{partially-ordered set}, or \textit{poset}, is a structure $(\Sigma, <)$, where the binary relation $<$ is irreflexive, asymmetric, and transitive on the domain set $\Sigma$. The first noted model theoretic patterning to be characterized by posets was the Strict Order Property (SOP). SOP is equivalent to the interpretation of a poset with an infinite chain. A poset having an infinite chain means that it embeds the poset $(\omega, <)$, where $<$ gives $\omega$ its standard strict linear order. This example gave the basis for Garcia and Mennuni's definition for $\Sigma$-SOP for an arbitrary poset $\Sigma$. 

\begin{definition}
    Suppose $(\Sigma, <)$ is a poset. Then a theory $T$ has $\Sigma$-SOP if there exists a partitioned formula $\vphi(x,y)$ and a set of tuples from $M\models T$, $\{a_s\colon s\in\Sigma\}$, where $|a_s|=|y|$ for all $s$, such that 
    \[M\models \forall x(\vphi(x,a_s)\to\vphi(x,a_t))\IFF s\leq t.\]

\end{definition}



As noted in \cite{garcia2022model}, $\Sigma$-SOP is trivially equivalent to some $M\models T$ interpreting a poset $\Sigma'$ into which $\Sigma$ embeds. Thus, in this terminology, $\SOP$ is equivalent to $(\omega, <)-\SOP$.


However, as Garcia and Mennuni show in their paper, a host of other classificatory properties are equivalent to $\Sigma$-SOP for a carefully-chosen $\Sigma$. 

\begin{definition}\label{poset definable defn}
 A property $P$ of formulas (respectively, theories) is said to be \textit{poset definable} if there is a poset $\Sigma_P$ such that $P$ holds of a formula (respectively, a theory) precisely when it has $\Sigma_P$-SOP. 
\end{definition}

Garcia and Mennuni explicitly build posets witnessing that OP and IP are poset definable. Then, they go on to show that a broad class of properties, the \textit{tree properties} arising from \textit{maximal consistency patterns}, are each poset definable. We will return to these maximal consistency patterns in Section \ref{straight definition section}, as they are closely related to straight definability, another important definition for a patterning properties. But for now, it suffices to note that many known patterning properties, like $\TP_1$, $\TP_2$, $\text{ATP}$, and $\SOP_3$ all fall under this heading, and are thus shown to be poset definable. In Section \ref{New results on SOPn}, we show that each $\mathrm{SOP}_n$  property is poset definable.

However, a notable open question remains for $\Sigma$-SOP as a universal definition of patterning: whether TP is poset definable. Since the consistency pattern giving rise to TP is not maximal (unlike its extremal forms, $\TPi$ and $\TPii$), Garcia and Mennuni's results do not determine whether a poset exists to characterize TP. This potential  omission also extends to the class of generalized tree properties defined in \cite{day2025results}, the properties $\calI$-TP for a Ramsey index structure $\calI$. 


Additionally, Garcia and Mennuni observe that, as a consequence of results in \cite{chernikov2019n}, it is impossible to characterize $\text{IP}_k$, for $k\geq 2$, with posets (\cite{garcia2022model}, Remark 1.2). However, as they point out in Question 4.5, there may be some other structure, perhaps analogous to posets, such that $\text{IP}_k$ can be characterized using interpretations of these structures. This question is, to our knowledge, still open.

\subsection{Trace definition and collapse of indiscernibles}

Recall that OP is not equivalent to interpreting an infinite linear order because it does not require that the order's domain be definable. The notion of \textit{trace definition}, offered in \cite{walsberg2025trace}, removes this requirement that the domain be definable, and consequently gives another plausible definition of patterning property. Essentially, an $L$-structure $M$ trace defines an $L'$-structure $N$ if it defines all of the pieces of language in $L'$ giving $N$ its structure, but it doesn't necessarily define the domain on $N$.
\begin{definition}\label{trace defn defn}
     Let $\tau\colon N\to M^k$ be an injective function for some $k \geq 1$. Then $M$ \textit{trace defines} $N$ via $\tau$ if, for each $L'$-formula $\sigma(x_0,\dotsm, x_{n-1})$, there is an $L$-formula $\vphi_\sigma$ such that, for all $a_0,\dotsm, a_{n-1}\in N$, $N\models \sigma(a_0,\dotsm, a_{n-1}) \IFF M\models \vphi_\sigma(\tau(a_0),\dotsm,\tau(a_{n-1})).$
\end{definition}
We say that a theory $T$ \textit{trace defines} a structure $N$ if some model $M$ of $T$ trace defines $N$, and $T$ trace defines another theory $T'$ if it trace defines each model of $T'$.

It should be noted that Walsberg does not introduce trace definability for the purpose of giving a common way of defining every model-theoretic patterning property of interest. Rather, his purpose is to introduce a particularly rich class of potentially higher-arity patterns generalizing the independence property. One may nonetheless be interested in seeing which patterning properties are trace definable. Walsberg showed that OP and IP can be characterized in terms of trace definability. A theory has OP if, and only if, it trace defines the structure $(\bbQ,<)$, and a theory has IP if, and only if, that theory trace defines the random graph. 

In spite of this promising start, trace definability falls short of characterizing all patterning properties. For instance, TP is not equivalent to the trace definability of any structure. Walsberg proved in \cite{walsberg2025trace} that there are simple theories which are \textit{trace maximal}, i.e., which trace define every structure (in a countable language). For OP and IP, theories on the complex, ``pattern-including" side of the dichotomy trace define some particular structure, while those on the simple, ``pattern-excluding" side fails to define that structure. Here, TP is the complex side, and yet some theories which omit TP can trace define every possible structure. If trace defining a structure is what we meant by including a pattern, then, were $\mathrm{TP}$ characterizable in terms of trace definability, it would be impossible to have trace maximal theories which don't include the tree property pattern. So it is impossible to find a trace definition characterizing TP.

Trace definability fares better at characterizing higher-arity patterns. To see why, it is helpful to introduce the notion of generalized indiscernible sequences, in order to discuss so-called ``indiscernible collapse" characterizations of model-theoretic properties. 

\begin{definition}{}
Suppose $\calI$ is an $L'$-structure and $M$ is an $L$-structure. Let $A=(a_i)_{i\in \calI}$ be a collection of tuples from $M$. Then we say that the $A$ is a \textit{generalized indiscernible (set)}, and in particular, an $\calI$\textit{-indexed indiscernible (set)} if, for all $n\geq 1$ and $i_0,i_1,\dotsm, i_{n-1},j_0,j_1,\dotsm, j_{n-1}\in \calI$, if $\qftp_\calI(i_0,\dotsm, i_{n-1})=\qftp_\calI(j_0,\dotsm, j_{n-1})$, then $$\tp_M(a_{i_0},\dotsm a_{i_{n-1}})=\tp_M(a_{j_0},\dotsm, a_{j_{n-1}}).$$
\end{definition}

When $\calI$ is taken to be an infinite linear order, then the generalized indiscernible is called an \textit{indiscernible sequence}. This is the original and most widely studied kind of indiscernible. When $\calJ$ is a structure expanding a linear order, $\calJ$-indexed indiscernibles are said to \textit{collapse} to indiscernible sequences in models of a theory $T$ if, whenever $(a_i)_{i\in\calJ}$ is a set of $\calJ$-indexed indiscernibles, it is also an indiscernible sequence.

There is a close connection between trace definability and (the failure of) indiscernible collapse. IP theories trace define the random graph, but they are also precisely the theories in which ordered random graph indiscernibles do not collapse, by \cite{scow2012characterization}. The trace definition of OP can also be seen as a failure to collapse, based on Shelah's result that indiscernible sequences are not always indiscernible sets precisely in the theories with OP.\footnote{Strictly speaking, this characterization of stability does not meet our definition of collapse, since we define a generalized indiscernible as collapsing \textit{to} an indiscernible sequence, while Shelah shows that in stable theories, indiscernible sequences collapse \textit{to} indiscernible sets, an even stronger notion of indiscernibility. But morally, this is very much an indiscernible collapse result.} Walsberg makes the connection precise (\cite{walsberg2025trace} Proposition 9.30), showing that a theory $T$ trace defines a finitely homogeneous Ramsey structure $\calJ$ if and only if there is some non-collapsed indiscernible copy of $\calJ$ in a model of $T$.

In \cite{chernikov2019n}, the authors extend Scow's result for IP by showing (Theorem 5.4) that a theory has $\text{IP}_k$ if and only if there is some generically-ordered $k+1$-ary hypergraph indiscernible which is not indiscernible, i.e., if these hypergraph indiscernibles fail to collapse. Meanwhile, $\text{FOP}_k$ is also equivalent to the failure of a particular indiscernible structure to collapse by results of \cite{abd2025higher}; the bulk of that paper consists of constructing that generalized indiscernible index structure, so its statement is beyond our scope here. But in summary, both of the higher arity classificatory properties we consider can be captured by collapse of indiscernibles, and hence by trace definition.

\subsection{Straight definition}\label{straight definition section}

The final  definition of model-theoretic patterning property which we discuss requires us to drop the assumption that patterns are first-order structures. Instead, we take them to be sets of indices which determine the intersections of definable sets: \textit{consistency} and \textit{inconsistency conditions}. The definition of TP provides a good example of this characterization, where the consistency conditions are the assertion that paths in the indexing tree give consistent sets of formulas, while inconsistency conditions determine that siblings in the tree give inconsistent definable sets. Slightly modifying the terminology from \cite{bailetti2024walk}, we present the notion of a \textit{straightly definable property}, originally defined in \cite{shelah_what_2000}. 

\begin{definition}\label{straight pattern definition}
     A \textit{straight pattern} is a pair $(\calC,\calI)$, where $\calC$ and $\calI$ are both subsets of $(\calP(\omega)\times\calP(\omega))\setminus\{(\emptyset,\emptyset)\}$.\footnote{In \cite{bailetti2024walk}, what we call straight patterns were defined as $n$-patterns, their finite approximations. By compactness, this distinction is insignificant, and this way of defining things makes certain proofs slightly neater, specifically that of Proposition \ref{poset implies straight}.}

    A pair of sets $(X^+,X^-)\in \calC$ is a \textit{consistency condition} of the straight pattern $(\calC,\calI)$. 
    
    A pair of sets $(Y^+,Y^-)\in \calI$ is an \textit{inconsistency condition} of $(\calC,\calI)$.
    \end{definition}

\begin{definition}\label{exhibiting a straight pattern}
    A formula $\vphi(x,y)$ \textit{exhibits} a straight pattern $(\calC,\calI)$ in an $L$-theory $T$ if there is a set $B=(b_i)_{i<\omega}$ of $|y|$-length tuples from $M\models T$ such that, for all $(A^+,A^-)\in \calC$ and $(Z^+,Z^-)\in \calI$, we have 
    \[\{\vphi(x,b_i)\colon i\in A^+\}\cup \{\neg\vphi(x,b_j)\colon j\in A^-\}\] is consistent, and \[\{\vphi(x,b_i)\colon i\in Z^+\}\cup \{\neg\vphi(x,b_j)\colon j\in Z^-\}\] is inconsistent.

    A theory $T$ \textit{exhibits} a straight pattern $(\mathcal{C}, \mathcal{I})$ if some formula $\varphi(x, y)$ exhibits $(\mathcal{C}, \mathcal{I})$.
\end{definition}

Together, these notions yield a way of characterizing properties of formulas and theories.

\begin{definition}\label{straightly definable property}
    A property $P$ of formulas (respectively, theories) is said to be \textit{straightly definable} if there is a straight pattern $(\calC_P,\calI_P)$ such that each partitioned formula (respectively, each theory) has $P$ if, and only if, it exhibits $(\calC_P,\calI_P)$.
\end{definition}

The notion of straight definition is able to characterize all of the paradigm binary classificatory properties, like OP, IP, and TP. In his Proposition 3.3, Bailetti presents straight patterns for all of these, alongside patterns for SOP, $\TPi$ and $\TPii$ (though \cite{shelah_what_2000} states earlier that the existence of these straight patterns is immediate, without explicitly defining them). We include the straght patterns for OP and IP, both as an instructive example, and to compare with their higher arity analogues in Section \ref{higher arity straight defn section}.

\begin{proposition}\label{straight pattern examples proposition}
    Let $\vphi(x,y)$ be a partitioned formula in the language of $T$. Then
    \begin{enumerate}
        \item $\vphi(x,y)$ witnesses $\mathrm{OP}$ for $T$ if and only if $\vphi$ exhibits in $T$ the straight pattern $(\calC,\calI)$, where  
        \[\calC=\{(\{\ell\colon \ell\geq i\},\{0,\dotsm, i-1\})\colon i<\omega\}\enspace\text{and}\enspace \calI=\emptyset.\]

        \item $\vphi(x,y)$ witnesses $\mathrm{IP}$ for $T$ if and only if $\vphi$ exhibits in $T$ the straight pattern $(\calC,\calI)$, where  
        
        \[\calC=\{(X,\omega\setminus X)\colon X\subseteq \omega\}\enspace\text{and}\enspace\calI=\emptyset.\]
        
    \end{enumerate}
\end{proposition}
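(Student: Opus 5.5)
Both parts reduce to matching the consistency conditions against the realizing tuples, with a little bookkeeping about which side of $\vphi$ the parameters sit on. Throughout, the parameters $(b_i)_{i<\omega}$ of Definition \ref{exhibiting a straight pattern} play the role of the $y$-side tuples, and realizations of the consistent partial types play the role of the $x$-side tuples. Since $\calI=\emptyset$ in both cases, exhibiting the pattern just means that every consistency condition yields a consistent partial type. We work in a sufficiently saturated model, so a consistent type over the $b_i$ is realized there; this also lets us pass freely between ``there is a model $M$ with tuples'' and ``the types are consistent''.

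\textbf{Part (2), IP.} Suppose $\vphi$ witnesses IP. The definition of IP has $a_i$ in the $x$-slot and $b_s$ in the $y$-slot, indexed by subsets $s$. To match the pattern, whose parameters are indexed by $\omega$, we swap the roles. Set the pattern parameters to be $a_i$, viewed through the dual formula $\vphi^{op}(y,x):=\vphi(x,y)$. Then $b_X$ realizes $\{\vphi^{op}(y,a_i): i\in X\}\cup\{\neg\vphi^{op}(y,a_j): j\notin X\}$, so the pattern is exhibited by $\vphi^{op}$.

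Handling this asymmetry is the main obstacle. The statement is about $\vphi(x,y)$ exhibiting the pattern, so we cannot simply switch to $\vphi^{op}$. Instead we use the standard fact that IP is symmetric: $\vphi$ has IP iff $\vphi^{op}$ does. The proof takes an indiscernible sequence witnessing IP via compactness and re-indexes by $\calP(\omega)$ versus $\omega$.

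Concretely, we show directly that from the $a_i,b_s$ we can find $c_i$ (in the $y$-slot) and $d_X$ such that $\vphi(d_X,c_i)$ holds iff $i\in X$. By compactness it suffices to do this for a finite $n$. Fix $2^n$ indices $c_{t}:=b_{s_t}$ for $t\subseteq n$, with $s_t$ chosen so that $j\in s_t$ iff $j\in n$ corresponds to a set $Y_j=\{t: j\in t\}$. To get arbitrary $X\subseteq 2^n$ on the $y$-side, use the $a$'s indexed $i<2^{2^n}$ and sets $s$ encoding the bit matrix. This is the usual VC-dual argument: every finite bipartite incidence pattern of $\vphi$ is realized, and since the full powerset pattern contains all finite patterns, its transpose is realized too. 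We then conclude by compactness.

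Conversely, if $\vphi$ exhibits the pattern, then realizations $a_X$ of the types give full shattering of $\{b_i\}$ by $\vphi(x,y)$ in the $x$-variable, and the same dualization yields IP in the stated orientation.

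\textbf{Part (1), OP.} The same scheme works, but with the order. A realization $a_i$ of the $i$-th condition satisfies $\vphi(a_i,b_\ell)$ iff $\ell\ge i$. Shifting indices to make the inequality strict (compare $a_{i+1}$ with $b_\ell$) gives $\vphi(a_i,b_j)$ iff $i\le j$, equivalently iff $i<j+1$. Reindexing by compactness then gives exactly the OP condition. Conversely, an OP witness, after taking an indiscernible sequence and shifting indices, realizes each consistency condition.
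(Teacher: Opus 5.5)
The paper gives no proof of this proposition. It attributes the patterns to Bailetti's Proposition 3.3 (Shelah calls them immediate), so there is no argument of the authors' to compare against. Your proof is correct in substance. You also rightly caught the one point that is not literally immediate from the paper's definitions. In the paper's definition of $\mathrm{IP}$, the $\omega$-indexed tuples sit in the $x$-slot and the $\calP(\omega)$-indexed tuples in the $y$-slot. The pattern instead puts its $\omega$-indexed parameters in the $y$-slot. So part (2) genuinely needs the standard fact that $\vphi(x,y)$ has $\mathrm{IP}$ iff $\vphi^{\mathrm{op}}(y,x)$ does, and finite transposition plus compactness is the right way to get it.

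Two clean-ups are needed. First, your ``concretely'' paragraph is garbled: the definition of $s_t$ does not parse. You also need neither $2^{2^n}$ of the $a_i$ nor any indiscernible sequence.
\begin{itemize}
\item \emph{From $\mathrm{IP}$ to the pattern.} Fix $n$ and a bijection $X\mapsto i_X$ from $\calP(n)$ onto $\{0,\dots,2^n-1\}$. Put $d_X:=a_{i_X}$, and for $j<n$ put $c_j:=b_{s_j}$ with $s_j:=\{i_X : j\in X\}$. Then $\models\vphi(d_X,c_j)$ iff $i_X\in s_j$ iff $j\in X$. Hence the formulas $\exists x\,(\bigwedge_{j\in X}\vphi(x,y_j)\wedge\bigwedge_{j\in n\setminus X}\neg\vphi(x,y_j))$, over all $n$ and $X\subseteq n$, are finitely satisfiable. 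A realization $(c_j)_{j<\omega}$ makes every type $\{\vphi(x,c_j):j\in X\}\cup\{\neg\vphi(x,c_j):j\notin X\}$ consistent.
\item \emph{From the pattern to $\mathrm{IP}$.} Apply the same transposition on the other side. From pattern realizations $a_X$, put $b'_S:=b_{j_S}$ for $S\subseteq n$ and $a'_i:=a_{X_i}$ with $X_i:=\{j_S: i\in S\}$. Then compactness yields the $2^{\aleph_0}$ tuples $b'_s$.
\end{itemize}

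Second, part (1) needs only a shift, with no compactness and no indiscernibles. Also, the formula you display ``after shifting'' is actually the unshifted one.
\begin{itemize}
\item If $a_i$ realizes the $i$-th condition, then the pairs $(a_{i+1},b_i)$ witness $\mathrm{OP}$, since $\vphi(a_{i+1},b_j)$ holds iff $j\ge i+1$.
\item Conversely, from an $\mathrm{OP}$ witness $(a_i,b_i)_{i<\omega}$, the parameters $b'_\ell:=b_{\ell+1}$ exhibit the pattern, with $a_i$ realizing the $i$-th condition.
\end{itemize}
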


As noted above, TP is also straightly definable. This shows that  straight definability offers a potential improvement upon poset definability, as it is still an open question whether TP can be characterized by posets. But is there any property which is poset definable but not straightly definable? There is not: this fact is stated in passing by Garcia and Mennuni (below their Question 4.2), but it is worth articulating exactly why this is the case.

\begin{proposition}\label{poset implies straight}
    Suppose $\Sigma$ is a poset. Then there is a straight pattern $(\calC, \calI)$ such that a formula $\vphi(x,y)$ has $\Sigma$-$\mathrm{SOP}$ if, and only if, $\vphi(x,y)$ exhibits $(\calC, \calI)$. It follows that if a property is poset definable, then it is straightly definable.
\end{proposition}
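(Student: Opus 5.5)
The plan is to translate the two halves of the defining condition of $\Sigma$-$\mathrm{SOP}$ directly into consistency and inconsistency conditions. For parameters $a_s,a_t$, the failure $M\models\neg\forall x(\vphi(x,a_s)\to\vphi(x,a_t))$ says exactly that $\{\vphi(x,a_s),\neg\vphi(x,a_t)\}$ is consistent (with $T$ together with the diagram of the parameters). Its truth says exactly that this set is inconsistent. Since $\forall x(\vphi(x,a_s)\to\vphi(x,a_t))$ is a first-order statement about $a_s,a_t$, there is no difference between evaluating it in $M$ and in an elementary extension.

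\textbf{Case 1: $\Sigma$ countable.} Fix an injection $\iota\colon\Sigma\to\omega$ and set
\[\calC=\{(\{\iota(s)\},\{\iota(t)\})\colon s\not\leq t\},\qquad \calI=\{(\{\iota(s)\},\{\iota(t)\})\colon s\leq t\}.\]
No pair in either set is $(\emptyset,\emptyset)$, so $(\calC,\calI)$ is a straight pattern in the sense of Definition \ref{straight pattern definition}. The case $s=t$ lands in $\calI$, which is harmless because $\vphi(x,a)\wedge\neg\vphi(x,a)$ is always inconsistent.

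The two directions are then as follows.
\begin{itemize}
\item Given witnesses $(a_s)_{s\in\Sigma}$ for $\Sigma$-$\mathrm{SOP}$, put $b_{\iota(s)}=a_s$ and let $b_i$ be arbitrary for $i\notin\iota(\Sigma)$. Unused indices occur in no condition, so $(b_i)_{i<\omega}$ exhibits $(\calC,\calI)$.
\item Conversely, given $(b_i)_{i<\omega}$ exhibiting $(\calC,\calI)$, put $a_s=b_{\iota(s)}$. The two translations above show that $\vphi(x,a_s)\subseteq\vphi(x,a_t)$ holds if and only if $s\leq t$.
\end{itemize}

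\textbf{Case 2: $\Sigma$ uncountable.} This is the only real obstacle, since straight patterns are indexed by $\omega$. I would reduce to Case 1 by compactness.

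First, by compactness (working in a sufficiently saturated model), a formula $\vphi$ has $\Sigma$-$\mathrm{SOP}$ if and only if it has $\Sigma_0$-$\mathrm{SOP}$ for every finite subposet $\Sigma_0\subseteq\Sigma$. The reason is that the type in variables $(y_s)_{s\in\Sigma}$ expressing the required inclusions and non-inclusions is finitely satisfiable exactly when every finite subposet is realized.

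Next, take a countable elementary substructure $\Sigma'\preceq\Sigma$ by downward Löwenheim–Skolem. Then $\Sigma'$ is a poset with the same finite subposets up to isomorphism, since each finite configuration is expressible by an existential sentence. Hence $\Sigma$-$\mathrm{SOP}$ and $\Sigma'$-$\mathrm{SOP}$ coincide for every formula, and Case 1 applied to $\Sigma'$ supplies the required straight pattern.

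\textbf{The final clause.} If $P$ is poset definable via $\Sigma_P$ (Definition \ref{poset definable defn}), the pattern $(\calC,\calI)$ built for $\Sigma_P$ witnesses that $P$ is straightly definable for formulas. For theories, a theory has $\Sigma_P$-$\mathrm{SOP}$ if and only if some formula does, if and only if some formula exhibits $(\calC,\calI)$, which is exactly the theory-level notion in Definition \ref{exhibiting a straight pattern}.
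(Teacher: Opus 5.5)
Your proof is correct. Its core, Case 1, is exactly the paper's argument: the paper uses the same pattern of singleton pairs, with $(\{s\},\{t\})$ a consistency condition when $s\not\leq t$ and an inconsistency condition when $s\leq t$. It checks both directions by the same propositional translation of $\forall x(\vphi(x,a_s)\to\vphi(x,a_t))$ into inconsistency of $\{\vphi(x,a_s),\neg\vphi(x,a_t)\}$.

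Where you differ is in care rather than in method.

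\textbf{Uncountable posets.} The paper indexes the pattern directly by elements of $\Sigma$, tacitly treating $\Sigma$ as a subset of $\omega$, so it never addresses uncountable posets. Your Case 2 fills that gap. It first uses compactness to show that $\Sigma$-$\mathrm{SOP}$ depends only on the finite subposets of $\Sigma$. It then passes to a countable $\Sigma'\preceq\Sigma$ with the same finite subposets. Elementarity is more than you need here: any countable subposet containing a copy of each of the countably many finite subposet types of $\Sigma$ would do.

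\textbf{The diagonal pairs.} The paper writes $\prec$ when defining $\calC$ and $\calI$, and this must be read reflexively for its proof to work. Otherwise the unsatisfiable pair $(\{\sigma\},\{\sigma\})$ would become a consistency condition. Your explicit use of $\leq$, which sends the diagonal pairs to $\calI$, handles this correctly.
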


\begin{proof}
    Define the pattern $(\calC,\calI)$ arising from $(\Sigma,\prec)$ as follows: 

    \[\calC=\{(\{\sigma\},\{\tau
    \})\colon \sigma\not\prec\tau\}\]
    
   \[\calI=\{(\{\sigma\},\{\tau\})\colon \sigma\prec\tau\}.\]

First, suppose $\vphi(x,y)$ has $\Sigma$-SOP witnessed by $(a_\sigma\colon \sigma\in \Sigma)$. Then the following equivalences hold for $\sigma,\tau\in\Sigma$:

\[\sigma\prec\tau\iff \models\forall x(\vphi(x,a_\sigma)\to\vphi(x,a_\tau))\iff\models \neg\exists x(\vphi(x, a_\sigma)\land\neg\vphi(x, a_\tau)).\]

The first of these equivalences comes from the definition of $\Sigma$-SOP, while the second holds by propositional logic. But the last of these statements is equivalent to the condition that $\{\vphi(x, a_\sigma),\neg\vphi(x,a_\tau)\}$ is inconsistent. Thus we can take $(a_\sigma\colon \sigma\in \Sigma)$ to be the parameters witnessing the fact that $\vphi(x,y)$ exhibits $(\calC,\calI)$, as  $\{\vphi(x, a_\sigma),\neg\vphi(x,a_\tau)\}$ is inconsistent if, and only if, $\sigma\prec\tau$. 

Conversely, if $\vphi(x,y)$ exhibits $(\calC,\calI)$, then $\sigma\prec\tau$ if and only if $\{\vphi(x, a_\sigma),\neg\vphi(x,a_\tau)\}$ is inconsistent. But again by propositional logic, this is the case exactly when $\models\forall x(\vphi(x,a_\sigma)\to\vphi(x,a_\tau))$, and thus $\vphi(x,y)$ witnesses $\Sigma$-SOP.
   
\end{proof}

It is still open whether a property being straightly definable implies that it is poset definable; TP may serve as a counterexample.

However, Garcia and Mennuni prove that an significant class of straightly definable properties are poset definable. Theorem 3.10 in \cite{garcia2022model} shows that all ``tree properties defined via maximal consistency patterns" are poset definable. The relevant notion of a ``consistency pattern" is very similar to that of an straight pattern above (specifically, it is a positive pattern with inconsistency conditions of size 2) while a   ``tree property" is the notion of exhibiting such a consistency pattern.

Until now, it was an open question whether \SOPn, for $n\geq 4$, was straightly definable. This served as one of the only known gaps in this definition. Our Theorem \ref{SOPn straightly definable} resolves this gap, showing that straight definition is effectively able to characterize all known binary patterns.

\subsection{Higher arity patterns}\label{higher arity straight defn section}

It is to our knowledge open whether $\text{IP}_k$ and $\text{FOP}_k$ are straightly definable. \cite{bailetti2024walk} proves several results relating $\text{IP}_k$ to forms of \textit{positive maximality}, i.e., the property of exhibiting all patterns in which no negations appear\footnote{Bailetti showed that theories which are $k+1$\textit{-positively maximal}, exhibiting all positive patterns where inconsistency conditions are at most size $k+1$, have $\text{IP}_k$. See his Theorem 6.11.}. We will discuss the  \textit{positively straightly definable} properties defined by these patterns in Section \ref{positive definability categoricity}. The relationship between $\text{IP}_k$ and straight patterns which allow negations is, as yet, unexplored. However, a simple and natural modification of the definitions of straight pattern and exhibition show that the use of consistency and inconsistency conditions is easily able to characterize higher arity properties.

\begin{definition}\label{k-ary straight pattern}
    For $k\geq 1$, a $k$\textit{-ary straight pattern} is a pair $(\calC, \calI)$ where each of $\calC$ and $\calI$ is a subset of $\calP(\omega^{k})\times\calP(\omega^k)\setminus\{(\emptyset,\emptyset)\}$.

    We define consistency and inconsistency conditions of a $k$-ary straight pattern as with an ordinary straight pattern.
    
\end{definition}

\begin{definition}\label{k-ary exhibits definition}
    A formula $\vphi(x,y_0,\dotsm, y_{k-1})$ \textit{exhibits} a $k$-ary straight pattern $(\calC,\calI)$ in an $L$-theory $T$ if there is a set $B=(b_{0,i},b_{1,i},\dotsm, b_{k-1,i})_{i<\omega}$ of tuples from $M\models T$, with $|b_{ji}|=|y_j|$ such that, for all $(A^+,A^-)\in \calC$ and $(Z^+,Z^-)\in \calI$, we have \[\{\vphi(x,b_{0,i_0},\dotsm, b_{k-1,i_{k-1}})\colon (i_0,\dotsm, i_{k-1})\in A^+\}\cup \{\neg\vphi(x,b_{0,j_0},\dotsm, b_{k-1,j_{k-1}})\colon (j_0,\dotsm, j_{k-1})\in A^-\}\] is consistent, and \[\{\vphi(x,b_{0,i_0},\dotsm, b_{k-1,i_{k-1}})\colon (i_0,\dotsm, i_{k-1})\in Z^+\}\cup \{\neg\vphi(x,b_{0,j_0},\dotsm, b_{k-1,j_{k-1}})\colon (j_0,\dotsm, j_{k-1})\in Z^-\}\] is inconsistent.

     A theory $T$ \textit{exhibits} a $k$-ary straight pattern $(\mathcal{C}, \mathcal{I})$ if some formula $\varphi(x, y)$ exhibits $(\mathcal{C}, \mathcal{I})$ in $T$.
\end{definition}

\begin{definition}\label{k-ary straightly definable property}
    A property $P$ of $k+1$-partitioned formulas (respectively, of theories) is said to be \textit{$k$-ary straightly definable} if there is a $k$-ary straight pattern $(\calC_P,\calI_P)$ such that each partitioned formula $\vphi(x,y_0,\dotsm, y_{k-1})$ (respectively, each theory) has $P$ if, and only if, 
    it exhibits $(\calC_P,\calI_P)$.
\end{definition}

It is clear from these definitions that the ordinary notions of a straight pattern, exhibition, and straightly definable property are just the $k=1$ case of these definitions. 

Moreover, the usual patterns witnessing the straight definability of OP and IP generalize to the $k$-ary analogues of these properties.

\begin{proposition}\label{higher arity straight pattern examples proposition}
    Let $\vphi(x,y_0,\dotsm, y_{k-1})$ be a partitioned formula in the language of $T$. Then
    \begin{enumerate}
        \item $\vphi$ witnesses $\mathrm{FOP}_k$ for $T$ if and only if $\vphi$ exhibits in $T$ the $k$-ary straight pattern $(\calC,\calI)$, where  
        \[\calC=\{(\{(i_0,\dotsm, i_{k-1})\colon i_{k-1}\leq f(i_0,\dotsm, i_{k-2})\},\{(i_0,\dotsm, i_{k-1})\colon i_{k-1}> f(i_0,\dotsm, i_{k-2})\})\colon\]
        \[ f\colon\omega^{k-1}\to\omega\enspace\text{is a function}\}\]
        
        and $\calI=\emptyset$.

        \item $\vphi(x,y)$ witnesses $\mathrm{IP}_k$ for $T$ if and only if $\vphi$ exhibits in $T$ the straight pattern $(\calC,\calI)$, where  
        
        \[\calC=\{(X,\omega^k\setminus X)\colon X\subseteq \omega^k\}\enspace\text{and}\enspace\calI=\emptyset.\]
        
    \end{enumerate}
\end{proposition}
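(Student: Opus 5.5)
The plan is to unwind Definition \ref{k-ary exhibits definition} and match it against Definitions \ref{FOPk defn} and \ref{IPk definition}. Both directions are translations between a family of realizing tuples $b_f$ (or $b_s$) and the consistency of the corresponding partial types. Throughout we work in a monster model $\bbM$ of $T$, or a sufficiently saturated model, so that consistency of a small set of formulas with parameters is the same as being realized. Both patterns have $\calI=\emptyset$, so only the consistency conditions need to be checked.

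For $\mathrm{FOP}_k$, suppose $\vphi$ witnesses $\mathrm{FOP}_k$ via sequences $(a_{j,i})$ and tuples $(b_f)$. Take the parameters $b_{j,i}:=a_{j,i}$ in Definition \ref{k-ary exhibits definition}. Fix a function $f\colon\omega^{k-1}\to\omega$ and consider the associated consistency condition $(A_f^+,A_f^-)$. By definition of $\mathrm{FOP}_k$, the tuple $b_f$ satisfies
\[\vphi(x,a_{0,i_0},\dotsm,a_{k-1,i_{k-1}})\quad\text{exactly when}\quad i_{k-1}\leq f(i_0,\dotsm,i_{k-2}).\]
So $b_f$ realizes the partial type
\[\{\vphi(x,\bar a_{\bar i})\colon \bar i\in A_f^+\}\cup\{\neg\vphi(x,\bar a_{\bar i})\colon \bar i\in A_f^-\},\]
which is therefore consistent. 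Note that $A_f^+\neq\emptyset$, since it contains every tuple with $i_{k-1}=0$. Hence the pair is not $(\emptyset,\emptyset)$ and the pattern is legitimate.

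Conversely, suppose $\vphi$ exhibits the pattern with parameters $(b_{j,i})$, taken in $\bbM$. For each $f$ the corresponding partial type is consistent, so it is realized in $\bbM$ by some tuple; call it $b_f$. Setting $a_{j,i}:=b_{j,i}$, the defining biconditional of $\mathrm{FOP}_k$ holds for every index tuple. This is because $A_f^+$ and $A_f^-$ partition $\omega^k$, so each instance is decided positively or negatively by the type. There are continuum many $f$, but all the $b_f$ can be found in a single model: either take a $(2^{\aleph_0})^+$-saturated model, or apply compactness and Löwenheim--Skolem, since the required configuration is a consistent set of formulas in new constants. Either way $\vphi$ witnesses $\mathrm{FOP}_k$.

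The $\mathrm{IP}_k$ case is word-for-word the same, with $X\subseteq\omega^k$ in place of $f$ and $(X,\omega^k\setminus X)$ in place of $(A_f^+,A_f^-)$. Here the excluded pair $(\emptyset,\emptyset)$ never arises, since $X\cup(\omega^k\setminus X)=\omega^k$ is nonempty.

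There is no real obstacle. The only points needing care are the following:
\begin{itemize}
\item realizing uncountably many types simultaneously in one model, which is handled by saturation or compactness as above;
\item noting that each consistency condition is a complete specification on $\omega^k$, so that a realization gives the exact biconditional and not merely one implication.
\end{itemize}
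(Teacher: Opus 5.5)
Your proposal is correct and is exactly the direct unwinding of Definitions \ref{k-ary exhibits definition}, \ref{FOPk defn} and \ref{IPk definition} that the paper intends: the paper gives no separate proof and presents the statement as the evident $k$-ary analogue of Proposition \ref{straight pattern examples proposition}. One small point is that your saturation concern is milder than you state, because the $b_f$ (or $b_s$) need not be realized jointly, and each realizes a type over countably many parameters, so any $\aleph_1$-saturated elementary extension suffices.
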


\section{Defining $\SOP_n$ with posets and straight definitions}\label{New results on SOPn}

We show that the following poset, which naturally modifies that witnessing OP in \cite{garcia2022model}, can be embedded in some interpretable poset in each theory which witnesses \SOPn. It follows that each $\SOP_n$ property is poset definable, and thus straightly definable.

\begin{definition}
    We define the following poset $\Sigma_n$: its domain is the $n\times \omega$-indexed array $\{(i,j)\colon i<n, j<\omega\}$, and the only relations are given by \[(i,j)<(i', j') \IFF i<i' \enspace\text{and}\enspace j<j'.\] (The case $n=4$ is visualized below, where a path in this diagram represents the order relation.)
\end{definition}

\begin{center}
\begin{tikzpicture}
    
    \node(b1){$(1,0)$};
    \node(b2)[right of = b1]{$(1,1)$};
    \node(dots0)[right of =b2]{$\dotsm$};
    \node(empty)[right of =dots0]{$(1,i)$};
    \node(dots1)[right of =empty]{$\dotsm$};
    \node (ai)[below of =empty]{$(0,i)$};
    \node (adots1)[left of =ai]{$\dotsm$};
    \node (adots2)[right of =ai]{$\dotsm$};
    \node (a2)[left of =adots1]{$(0,1)$};
    \node (a1)[left of =a2]{$(0,0)$};

    \node (20)[above of =b1]{$(2,0)$};
    \node (21)[right of =20]{$(2,1)$};
    \node (2dots)[right of =21]{$\dotsm$};
    \node (2i)[right of =2dots]{$(2,i)$};
    \node (2dots2)[right of =2i]{$\dotsm $};

    \node (30)[above of =20]{$(3,0)$};
    \node (31)[right of =30]{$(3,1)$};
    \node (3dots)[right of =31]{$\dotsm$};
    \node (3i)[right of =3dots]{$(3,i)$};
    \node (3dots2)[right of =3i]{$\dotsm $};
    
    \draw[thick, ->] (a1) -- (b2);
    \draw[thick, ->](a1)--(31);
    \draw[thick, ->](b1)--(31);
    \draw[thick, ->](a2)--(3i);
    \draw[thick, ->](21)--(3i);
    \draw[thick, ->](b2)--(2i);
    \draw[thick, ->](b2)--(3i);
    \draw[thick, ->](a2)--(empty);
    \draw[thick, ->](20)--(31);

\end{tikzpicture}
\end{center}

\begin{theorem}\label{SOPn pattern theorem}
    For $n\geq 3$, a theory $T$ has $\mathrm{SOP}_n$ if, and only if, $T$ has $\Sigma_n$-$\mathrm{SOP}$.
    
\end{theorem}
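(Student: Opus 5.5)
The proof splits into two directions. The direction from $\Sigma_n$-$\mathrm{SOP}$ to $\mathrm{SOP}_n$ should be a direct construction. The converse needs a more delicate choice of definable family, and I expect it to be the main obstacle.

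\emph{($\Sigma_n$-$\mathrm{SOP}\Rightarrow\mathrm{SOP}_n$).} Suppose $\vphi(x,y)$ and $(a_s\colon s\in\Sigma_n)$ witness $\Sigma_n$-$\mathrm{SOP}$. Write $y\preceq y'$ for the formula $\forall x(\vphi(x,y)\to\vphi(x,y'))$; this is a definable preorder. I would take the tuples $b_j=(a_{(0,j)},\dotsm,a_{(n-1,j)})$ for $j<\omega$. For $u=(u_0,\dotsm,u_{n-1})$ and $v=(v_0,\dotsm,v_{n-1})$, define
\[\theta(u,v)\;:=\;\bigwedge_{i<n-1} u_i\preceq v_{i+1}\;\wedge\;\neg(v_0\preceq u_{n-1}).\]
For $j<j'$ we have $(i,j)<(i+1,j')$ in $\Sigma_n$, so the first conjuncts hold. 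Also $(0,j')\not\leq(n-1,j)$ because $j'>j$, so the last conjunct holds. Hence $\models\theta(b_j,b_{j'})$ for all $j<j'$.

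For the anti-looping clause, suppose $\theta(u^1,u^2),\dotsm,\theta(u^{n-1},u^n),\theta(u^n,u^1)$ all hold. Chaining the first conjuncts along the cycle gives $u^1_0\preceq u^2_1\preceq\dotsm\preceq u^n_{n-1}$. By transitivity of $\preceq$ we get $u^1_0\preceq u^n_{n-1}$. But $\theta(u^n,u^1)$ asserts $\neg(u^1_0\preceq u^n_{n-1})$, a contradiction. So this $\theta$ witnesses $\mathrm{SOP}_n$ in the sense of Definition~\ref{definition of sopn}.

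\emph{($\mathrm{SOP}_n\Rightarrow\Sigma_n$-$\mathrm{SOP}$).} Let $\vphi(x,y)$ witness $\mathrm{SOP}_n$. By Ramsey and compactness, take $(a_q)_{q\in\mathbb{Q}}$ indiscernible with $\models\vphi(a_q,a_r)$ for $q<r$. Let $R^k(y,x)$ be the formula saying there is a $\vphi$-path of length $k$ from $y$ to $x$, and note that $R^k(a_r,a_q)$ fails for $r>q$ when $k\le n-1$. I would look for a formula $\psi(x;y)$ and parameters $c_{(i,j)}$ built from the $a_q$ such that $\psi(M,c_{(i,j)})\subseteq\psi(M,c_{(i',j')})$ exactly when $i\le i'$ and $j\le j'$.

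My first attempt is to let $\psi(x;c_{(i,j)})$ define the set of $x$ admitting a $\vphi$-path of length roughly $n-1-i$ into a fixed segment above $a_j$. The row index $i$ would control path length and the column index $j$ would control the position in the sequence. The inclusion "$i<i'$, $j<j'\Rightarrow$ containment" should then come from concatenating with the edge $\vphi(a_j,a_{j'})$. Non-inclusions should come from exhibiting points such as the sequence elements themselves. I would use the fact that a path of total length $\le n$ closing back along the sequence would create a $\vphi$-cycle of length $\le n$. Such a cycle is inconsistent, since $\vphi$-cycles of length $m\le n$ can be padded using the sequence, after replacing $\vphi$ by $\vphi(x,y)\wedge\bigwedge_k\neg R^k(y,x)$ if necessary.

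The main obstacle is that $\vphi$ is not transitive. Inclusions between path-defined sets may fail, or may also hold for incomparable pairs such as $(i,j)$ versus $(i',j')$ with $i<i'$ and $j>j'$, so the exact boundaries of the defining sets need care. If direct path-length sets fail, I would instead use parameters that are $(n-1)$-tuples of sequence elements taken at columns spaced by $j$. The incomparable pairs would then be refuted by the cycle bound from the converse direction's argument, read in reverse.
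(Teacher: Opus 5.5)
The direction $\Sigma_n$-$\mathrm{SOP}\Rightarrow\mathrm{SOP}_n$ is correct and is essentially the paper's argument. The paper's formula $\pi$ includes all conjuncts for $\ell<m$. Your consecutive-row links plus the single negative conjunct $\neg(v_0\preceq u_{n-1})$ already give the $n$-cycle contradiction via transitivity of $\preceq$.

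The converse is not proved. You never fix $\psi$ or the parameters $c_{(i,j)}$, and the construction you do describe fails even in the simplest case. Take $T=\mathrm{Th}(\mathbb{Q},<)$ and let $\varphi(x,y)$ be $x<y$. By density, for any $k\geq 1$ and any target set $S$, the set of $x$ admitting a $\varphi$-path of length $k$ (or at most $k$, or at least $k$) into $S$ is just $\{x : x<s\text{ for some }s\in S\}$, independent of $k$. Suppose the row $i$ is encoded only through path length and the target segment depends only on $j$. Then the sets attached to $(0,j)$ and $(1,j)$ coincide, even though these points are incomparable in $\Sigma_n$. Here $\varphi$ is transitive, so the problem is not the non-transitivity you flagged but something more basic: a family of path-length sets in one sort need not remember the row. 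Your fallback with $(n-1)$-tuples is not specified enough to check.

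The missing idea, which is the paper's, is to make the row a label in the domain. First, $\Sigma$-$\mathrm{SOP}$ only requires a definable preorder $\preceq$ and elements $e_s$ with $e_s\preceq e_t\iff s\leq t$. Indeed, take $\psi(w;v):=w\preceq v$; reflexivity and transitivity give $\forall w(\psi(w;e_s)\to\psi(w;e_t))\iff e_s\preceq e_t$.

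The construction is as follows.
\begin{itemize}
\item Fix distinct $c_0,\ldots,c_{n-1}$ and let the domain be $\bigcup_{i<n}\{c_i\}\times M^{|y|}$.
\item Put $(c_i,\bar x)\preceq(c_{i'},\bar y)$ iff the two are equal, or $i<i'$ and there is a $\varphi$-path with exactly $i'-i$ edges from $\bar x$ to $\bar y$.
\end{itemize}
Transitivity is then just additivity of path lengths, so the non-transitivity of $\varphi$ is harmless.

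With $(a_q)_{q\in\mathbb{Q}}$, the map $(i,j)\mapsto(c_i,a_j)$ is an embedding:
\begin{itemize}
\item For $i<i'$ and $j<j'$, intermediate rationals give the required path.
\item For $i\geq i'$, the labels, together with $a_j\neq a_{j'}$ for $j\neq j'$, exclude $\preceq$.
\item For $i<i'$ and $j>j'$, an $(i'-i)$-edge path from $a_j$ to $a_{j'}$ closes up with an $(n-(i'-i))$-edge forward path into a closed $\varphi$-walk of length $n$, which is impossible.
\item For $j=j'$, one only gets a closed walk of length $i'-i<n$ at $a_j$.
\end{itemize}
The last case is exactly where your replacement $\varphi\wedge\bigwedge_{1\leq k<n}\neg R^k(y,x)$ is needed. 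It kills closed walks of every length $\leq n$ while preserving $\varphi(a_q,a_r)$ for $q<r$. The paper's written argument passes over this case.
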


\begin{proof}
    First, suppose $T$ has $\Sigma_n$-SOP, witnessed by the formula $\psi(x,y)$ and the parameters $(a_{i,j})_{i<n, j<\omega}$. 
    We claim that there is a formula that witnesses $\SOP_n$ for the parameters $(\bar{a}_i)_{i<\omega}$, where $\bar{a}_i=(a_{0,i}, a_{1,i},\dotsm, a_{n-1,i})$; i.e., the parameters in the poset's ``columns," in ascending order. 
    

    The witnessing formula will be an element of the type of the first two columns, $p(\bar{y}_0, \bar{y}_1):=\tp(\bar{a}_0, \bar{a}_1)$.

    By $\Sigma_n$-SOP, $p$ contains all formulae of the forms $\forall x(\psi(x, y_{\ell,i})\to \psi(x, y_{m,j}))$ and $\neg \forall x(\psi(x, y_{m,i})\to \psi(x, y_{\ell, j}))$, for all $\ell<m<n$, $i < j < \omega$. For $\bar{y}=(y_{0}, \ldots y_{n-1})$, $\bar{y'}=(y'_{0}, \ldots y'_{n-1})$ define 
    
    $$\pi(\bar{y}, \bar{y}')=: \bigwedge_{\ell < m < n} \forall x(\psi(x, y_{\ell})\to \psi(x, y'_{m})) \wedge \bigwedge_{\ell < m < n} \neg \forall x(\psi(x, y_{m})\to \psi(x, y'_{\ell})) $$.

    Then $\pi(\bar{a}_{i}, \bar{a}_{j})$ if $i < j$. We will show that $\pi(\bar{y}, \bar{y}')$ is a witnessing formula for $\mathrm{SOP}_{n}$. It remains to show the anti-looping clause.

    To check that we can have no loops of length $n$ in $\pi$, consider the formula 
    \[Q(\bar{y}_0,\dotsm, \bar{y}_{n-1})=: \pi(\bar{y}_0,\bar{y}_1)\wedge \pi(\bar{y}_1,\bar{y}_2)\wedge\dotsm \wedge \pi(\bar{y}_{n-2},\bar{y}_{n-1})\wedge \pi(\bar{y}_{n-1},\bar{y}_0).\] 

    Now, $Q$ implies the following chain of implications:

   $$
    \forall x(\psi(x, y_{0,1})\to \psi(x, y_{1,2})), \forall x(\psi(x, y_{1,2})\to \psi(x, y_{2,3})), \dotsm\\
    \dotsm, \forall x(\psi(x, y_{n-2,n-1})\to \psi(x, y_{n-1,0})).
  $$


    Thus, this formula implies $\forall x(\psi(x, y_{0,1})\to \psi(x, y_{n-1,0}))$. But since $0 < n-1$, we also have that $\pi(\bar{y}_{n-1},\bar{y}_0) \vdash\neg \forall x(\psi(x, y_{n-1,0})\to \psi(x, y_{0, 1}))$. Hence $Q$ is inconsistent, so $\pi(\bar{y}, \bar{y}')$ satisfies the anti-looping clause.
    

    Conversely, suppose that $\vphi(x,y)$ witnesses $\SOP_n$. By compactness, we may assume it witnesses $\SOP_n$ on parameters indexed by $(\bbQ,<)$, say $(a_q)_{q\in\bbQ}$ in $\bbM\models T$; take these parameters to have length $N$. 
Now recall that $T$ has $\Sigma$-SOP if, and only if, $\bbM\models T$ interprets a poset $\Sigma'$ into which $\Sigma$ embeds. We show that $\bbM$ interprets a poset into which $\Sigma_n$ embeds. 

Fixing distinct arbitrary parameters $c_0,\dots, c_{n-1}$ from $\bbM$, the domain for the poset consists of $n$ levels, given by the $c_i$. The domain set is $P=\bigcup_{i<n}\{c_i\}\times\bbM^N$. The poset relation is defined between pairs of variables $(z,\bar{x})$, where $\bar{x}$ has length $N$ and $z$ has length 1. It expresses the relation that holds between $(c_i, \bar{b}_0)$ and $(c_{i'},\bar{b}_1)$ when $i<i'$ and there is a $i'-i$-length path in the $\vphi$-relation starting with $\bar{b}_0$ and ending with $\bar{b}_1$. Let $\vphi^k(\bar{x}, \bar{y})$ abbreviate the following formula, which asserts that there is a length-$k$ path from $\bar{x}$ to $\bar{y}$:

$$\vphi^k(\bar{x}, \bar{y})=\exists \bar{u}_0,\dotsm, \bar{u}_{k-1}\left[\bar{x}=\bar{u}_0\land \bar{y}=\bar{u}_{k-1}\land \bigwedge_{\ell<k-1}\vphi(\bar{u}_\ell, \bar{u}_{\ell+1})\right].$$

Then we let 
    \[\psi(z,\bar{x};w,\bar{y})=(\bar{x}=\bar{y}\land z=w)\lor\bigvee_{i<i'<n}\left((z=c_i
    \land w=c_{i'})\land \vphi^{i'-i}(\bar{x},\bar{y})\right).\]

    This formula, by its construction, defines a poset relation $\preceq$ on $P$, so it suffices to show that $\Sigma_n$ embeds into $(P,\preceq)$. We claim that the function $h\colon \Sigma_n\to (P,\preceq)$ defined by $h((i,j))=(c_i, a_j)$ gives the desired embedding.

    To check that $h$ is an embedding, first suppose $(i,j)\leq(i',j')$. If $(i,j)=(i',j')$, then $h((i,j))\preceq h((i',j'))$, by the first disjunct of $\psi$. So we may assume $(i,j)<(i',j')$; hence $i<i' <n-1$ and $j<j'$. Since $\bbQ$ is dense, we may find $q_0<q_1<\dotsm <q_{i'-i-1}$ with $j=q_0$ and $j'=q_{i'-i-1}$. We have $\vphi(a_{q_0}, a_{q_1}), \vphi(a_{q_1},a_{q_2}),\dotsm, \vphi(a_{q_{i-i'-2}},a_{q_{i'-i-1}})$, and thus $\models \psi(c_i,a_j;c_{i'},a_{j'})$. Thus we have that $h((i,j))\prec h((i',j'))$; so in either case, $h((i,j))\preceq h((i',j'))$.
    
    Now, suppose that $(i,j)\not\leq(i',j')$. Then at least one of $i\geq i'$ and $j\geq j'$ is true. If $i>i'$, then there is no disjunct of $\psi$ that could make $\psi(c_i,a_j;c_{i'},a_{j'})$ true, by the equality conditions on the $c_k$ parameters. If $i=i'$ we must look to the first conjunct of $\psi$, and since in this case $j\neq j'$, we have $\not\models \psi(c_i,a_j;c_{i'},a_{j'})$. Finally, suppose $i<i'$ and $j\geq j'$. If we had $\models \psi(c_i,a_j;c_{i'},a_{j'})$, then it would follow that there is a $i'-i$-length $\vphi$-path from $a_{j}$ to $a_{j'}$. The fact that $\vphi$ witnesses $\SOP_n$ on $(a_q)_{q\in\bbQ}$ implies, as in the previous paragraph, that there is an $n-(i'-i)$-length $\varphi$-path from $a_{j'}$ to $a_{j}$. But together with the links in the $i'-i$-length $\vphi$ path from $a_{j}$ to $a_{j'}$, this would show that there is an $n$-length $\vphi$-loop, contradicting $\SOP_n$'s condition that such loops are inconsistent. Thus in any case we must have $h((i,j))\not\preceq h((i'j'))$. We conclude that $h$ is an embedding of posets, and thus that $T$ has $\Sigma_n$-SOP.
    
\end{proof}

We have now proved that the $\SOP_n$ properties are poset definable. It immediately follows by Proposition \ref{poset implies straight} that the $\SOP_n$s are straightly definable. 

\begin{theorem}\label{SOPn straightly definable}
    For $n\geq 3$, $\mathrm{SOP}_n$ is straightly definable.
\end{theorem}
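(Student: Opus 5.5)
The plan is to combine Theorem \ref{SOPn pattern theorem} with Proposition \ref{poset implies straight}. Theorem \ref{SOPn pattern theorem} shows that, for $n \geq 3$, a theory has $\mathrm{SOP}_n$ exactly when it has $\Sigma_n$-$\mathrm{SOP}$. So at the level of theories, $\mathrm{SOP}_n$ is poset definable in the sense of Definition \ref{poset definable defn}, with witnessing poset $\Sigma_{\mathrm{SOP}_n} = \Sigma_n$.

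Next, Proposition \ref{poset implies straight}, applied to $\Sigma_n$, produces the straight pattern $(\calC, \calI)$ whose index set is $\Sigma_n$. Its conditions are
\[\calC = \{(\{\sigma\},\{\tau\}) \colon \sigma \not\prec \tau\}, \qquad \calI = \{(\{\sigma\},\{\tau\}) \colon \sigma \prec \tau\},\]
and a formula has $\Sigma_n$-$\mathrm{SOP}$ if and only if it exhibits $(\calC,\calI)$. Quantifying over formulas gives that a theory has $\Sigma_n$-$\mathrm{SOP}$ if and only if it exhibits $(\calC,\calI)$. Chaining the two equivalences, $T$ has $\mathrm{SOP}_n$ if and only if $T$ exhibits $(\calC,\calI)$, which is precisely Definition \ref{straightly definable property} for properties of theories.

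One bookkeeping point needs care. Straight patterns are indexed by subsets of $\omega$, while this pattern is indexed by $\Sigma_n$. Since $\Sigma_n$ is countable, I fix a bijection $\Sigma_n \cong \omega$, for instance $(i,j) \mapsto nj + i$, and transport $\calC$ and $\calI$ along it. Exhibiting a pattern depends only on which index sets are required to be consistent or inconsistent, not on the names of the indices, so this relabeling does not change which formulas or theories exhibit the pattern.

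There is no real obstacle, since all the work is already in Theorem \ref{SOPn pattern theorem}. The one thing to check is that the statement is read at the level of theories. The converse direction of Theorem \ref{SOPn pattern theorem} builds a new formula $\psi$ from the $\mathrm{SOP}_n$ witness $\varphi$, so it does not show that the same formula witnesses both properties. Hence the conclusion is straight definability of $\mathrm{SOP}_n$ as a property of theories, which is what the theorem asserts.
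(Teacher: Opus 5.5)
Your argument is exactly the paper's: Theorem \ref{SOPn pattern theorem} makes $\mathrm{SOP}_n$ poset definable via $\Sigma_n$, and Proposition \ref{poset implies straight} turns this into a straight pattern; your reindexing of $\Sigma_n$ by $\omega$ and your theory-level reading are correct bookkeeping that the paper leaves implicit. One small caveat, inherited from the paper's notation: in your displayed pattern, $\prec$ must be read as the non-strict order $\preceq$ (matching the $\leq$ in the definition of $\Sigma$-SOP), since with a strict order $\calC$ would contain the unsatisfiable conditions $(\{\sigma\},\{\sigma\})$.
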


\begin{proof}
    By Theorem \ref{SOPn pattern theorem}, each $\SOP_n$ is poset definable. Thus by Proposition \ref{poset implies straight}, $\SOP_n$ is straightly definable.
\end{proof}

This answers the open question mentioned in \cite{bailetti2024walk} (see p. 23) about the properties $\text{SOP}_n$ for $n\geq 4$; as noted there, \cite{kaplan2024generic} provide a positive pattern for $\SOP_3$ (though Fact 1.3 of \cite{shelah_usvyatsov_2008} also straightforwardly implies positive straight definability of $\mathrm{SOP}_{3}$).  While Shelah claimed to have demonstrated that all of the properties $\SOP_n$  are straightly definable in \cite{shelah_what_2000} (see Observation 5.19), his proof has long been known to be erroneous; we discuss why in detail in Appendix \ref{shelah ``straight definition"}. By providing a straight definition for all of these properties, we have filled a significant gap in the theory of model-theoretic patterning notions. The following table represents what we now know, for all of Shelah's classical model-theoretic patterning properties. Note that the property $\TPi$ is equivalent to $\SOP_2$, so that property is included in the chart. The notion of a \textit{positively straightly definable property} is defined in the following section, Definition \ref{positively straightly definable property}.

\:

\resizebox{\textwidth}{!}{
  \begin{tabular}{l|rrrrrrrrr}
     & $\mathrm{OP}$ & $\mathrm{IP}$ & $\mathrm{TP}$ & $\mathrm{SOP}_{1}$ & $\mathrm{SOP}_{2}$  & $\mathrm{SOP}_{3}$ & $\mathrm{SOP}_{\geq 4}$ & $\mathrm{SOP}$ & $\mathrm{TP}_{2}$ \\
     \hline
     trace definable\footnotemark  & Y & Y & N & N & N & N & N & N & N \\
     positively straightly definable  & Y\footnotemark & Y\footnotemark[\value{footnote}] & Y & Y & Y & Y\footnotemark & N\footnotemark & N\footnotemark[\value{footnote}] & Y \\
     poset definable  & Y\footnotemark & Y\footnotemark[\value{footnote}] & ? & Y\footnotemark[\value{footnote}] & Y\footnotemark[\value{footnote}]\textsuperscript{,}\footnotemark & Y\footnotemark[\numexpr\value{footnote}-1\relax] & \textbf{Y} & Y & Y\footnotemark[\numexpr\value{footnote}-1\relax] \\
     straightly definable  & Y\footnotemark[\numexpr\value{footnote}-2\relax] & Y\footnotemark[\numexpr\value{footnote}-2\relax] & Y & Y & Y & Y\footnotemark[\numexpr\value{footnote}-3\relax] & \textbf{Y} & Y\footnotemark[\numexpr\value{footnote}-2\relax] & Y \\
  \end{tabular}
}

\addtocounter{footnote}{-6} 
\stepcounter{footnote}\footnotetext{ All memberships and non-memberships in this row due to \cite{walsberg2025trace}}
\stepcounter{footnote}\footnotetext{\cite{day2025results}}
\stepcounter{footnote}\footnotetext{\cite{shelah_usvyatsov_2008}}
\stepcounter{footnote}\footnotetext{\cite{bailetti2024walk}}
\stepcounter{footnote}\footnotetext{\cite{garcia2022model}}
\stepcounter{footnote}\footnotetext{Uses $\mathrm{NSOP}_{1} = \mathrm{NSOP}_{2}$.}

\:

\:

We'll return to the one remaining question mark, that of the poset definability of TP, in Section \ref{open questions}.

\section{Positive straight definability in countably categorical theories}\label{positive definability categoricity}

By the above Theorem \ref{SOPn straightly definable}, we now know that all of the properties within the original classification-theoretic hierarchy defined by Shelah fall within the notion of a model-theoretic patterning property given by straight definability. These include the order property $\mathrm{OP}$, the independence property $\mathrm{IP}$, the tree property $\mathrm{TP}$, the strict order property $\mathrm{SOP}$, $\mathrm{TP}_{2}$, $\mathrm{SOP}_{n}$ for integers $1 \leq n \leq 3$, and now, completing the picture for straight definability, $\mathrm{SOP}_{n}$ for integers $n \geq 4$. However, much remains unknown on a family of central and longstanding questions about this original hierarchy: whether some of the properties within this hierarchy (or intersections of these properties) are equal or not. As we show in this section, viewing properties alike as \textit{positively} straightly definable gives general techniques for analyzing these problems and giving quantifier complexity bounds on formulas witnessing implication between them.

The most iconic of these equivalence questions involve the properties $\mathrm{SOP}_{n}$ for $1 \leq n \leq 3$. These properties were introduced in \cite{Dzamonja2004}, and first attested by \cite{shelah_what_2000} in notes, citing this work then in preparation with Džamonja, based on lectures delivered at Rutgers University in fall of 1997. Compared to $\mathrm{SOP}_{n}$ for integers $n \geq 3$, $\mathrm{SOP}_{1}$ and $\mathrm{SOP}_{2}$ are defined differently:

\begin{definition}
\label{1-strict order property}

A theory $T$ has $\mathrm{SOP}_{1}$ if there exists a formula $\varphi(x, y)$ and tuples $\{b_{\eta}\}_{\eta \in 2^{<\omega}}$ (from some model $M \models T$) so that $\{\varphi(x, b_{\sigma \upharpoonright n})\}_{n < \omega}$ is consistent for any $\sigma \in 2^{\omega}$, but for any $\eta_{2} \unrhd \eta_{1} \smallfrown \langle 0\rangle$, $\{\varphi(x, b_{\eta_{2}}), \varphi(x, b_{\eta_{1} \smallfrown \langle 1\rangle})\}$ is inconsistent. Otherwise it is $\mathrm{NSOP}_{1}$.
\end{definition}

\begin{definition}
\label{2-strict order property}

A theory $T$ has $\mathrm{SOP}_{2}$ if there exists a formula $\varphi(x, y)$ and tuples $\{b_{\eta}\}_{\eta \in 2^{<\omega}}$ so that $\{\varphi(x, b_{\sigma \upharpoonright n})\}_{n < \omega}$ is consistent for any $\sigma \in 2^{\omega}$, but for incomparable $\eta_{1}$ and $\eta_{2}$, $\{\varphi(x, b_{\eta_{1}}), \varphi(x, b_{\eta_{2}})\}$ is inconsistent. Otherwise it is $\mathrm{NSOP}_{2}$.
\end{definition}

Notice that these properties are straightly definable (Definition \ref{straightly definable property}), but in particular, they are straightly definable using only consistency and inconsistency among the positive instances of $\varphi(x, y)$, with no negative instances of this formula, or instances of $\neg\varphi(x, y)$, being used in the definition. They therefore satisfy a definition of being a patterning property defined by \cite{bailetti2024walk} which, again slightly modifying the terminology there, we will call \textit{positive straight definability}. We can think of positive straight definability as another notion of patterning property, alongside the notions of poset definability, trace definability and straight definability exposited in Section 3 above.

\begin{definition}\label{positively straightly definable property}
    
    (1) A straight pattern $(\mathcal{C}, \mathcal{I})$ is a \textit{positive pattern} if $X^{-} = \emptyset$ for all $(X^{+}, X^{-}) \in \mathcal{C}$, and $Y^{-} = \emptyset$ for all $(Y^{+}, Y^{-}) \in \mathcal{I}$.

    (2) A property $P$ of formulas (respectively, theories) is said to be \textit{positively straightly definable} if there is a positive pattern $(\calC_P,\calI_P)$ such that each partitioned formula $\vphi(x,y)$ (resp. theory $T$) has $P$ if, and only if, $\vphi(x,y)$ (resp. $T$) exhibits $(\calC_P,\calI_P)$.
\end{definition}

In addition to $\mathrm{SOP}_{1}$ and $\mathrm{SOP}_{2}$, the tree property $\mathrm{TP}$ (Definition \ref{tree property} above) is clearly positively straightly definable, as is $\mathrm{TP}_{2}$, which we will define below (Remark \ref{preservation of sopn}). We also noted above that \cite{shelah_usvyatsov_2008} showed that $\mathrm{SOP}_{3}$ is positively straightly definable.\footnote{More precisely, they provide a two-formula characterization that closely resembles this; this one-formula characterization also resembles the ``compatible order property" of \cite{Mal10}.} Specifically:

\begin{fact}
\label{positive straight definition of sop3}

A theory has $\mathrm{SOP}_{3}$ if and only if there is an array $\{a_{i}, b_{i}\}_{i < \omega}$ and a formulas $\varphi(x, y)$ with

    (1) For $m < n$, $\{\varphi(x, b_{i})\}_{i \leq m} \cup \{\varphi(x, a_{i})\}_{m < i \leq n}$ is consistent.

    (2) For $ i < j$, $\{\varphi(x, a_{i}), \varphi(x, b_{j})\}$ is inconsistent.

\end{fact}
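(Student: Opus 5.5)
Both directions are direct translations between the $\mathrm{SOP}_{3}$ definition (Definition \ref{definition of sopn} with $n = 3$) and the array in the statement. I will use compactness and indiscernibility freely, and a standard equivalent formulation of $\mathrm{SOP}_{3}$ due to Shelah.

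\emph{From the array to $\mathrm{SOP}_{3}$.} Suppose $\varphi(x,y)$ and $\{a_i,b_i\}_{i<\omega}$ satisfy (1) and (2). By compactness and Ramsey, I replace the array with one indexed by $\mathbb{Q}$ such that the sequence of pairs $c_i=(a_i,b_i)$ is indiscernible; (1) and (2) are preserved since they are type conditions on finite subsequences. I then look for a formula $\theta(\bar y,\bar y')$ on pairs that expresses that the ``$b$-part up to $\bar y$, $a$-part after $\bar y'$'' is consistent, e.g.
\[\theta(y_a y_b, y'_a y'_b) := \exists x\,(\varphi(x,y_b)\wedge\varphi(x,y'_a)) \wedge \neg\exists x\,(\varphi(x,y'_b)\wedge\varphi(x,y_a)),\]
possibly strengthened with more instances to block loops. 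Condition (1) with $m=i$, $n=j$ gives $\exists x(\varphi(x,b_i)\wedge\varphi(x,a_j))$ for $i<j$. Condition (2) gives $\neg\exists x(\varphi(x,a_i)\wedge\varphi(x,b_j))$ for $i<j$, which is the form I need in the second conjunct. So $\theta(c_i,c_j)$ holds for $i<j$.

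The anti-looping clause is the real issue: a 3-cycle in $\theta$ need not be inconsistent for this crude $\theta$. Instead of building $\theta$ by hand, I would invoke Shelah's equivalent formulation of $\mathrm{SOP}_{3}$ (as in \cite{shelah_usvyatsov_2008}). That formulation asks for an indiscernible sequence and two formulas $\psi_0(x,y),\psi_1(x,y)$ such that $\{\psi_0(x,c_i),\psi_1(x,c_j)\}$ is inconsistent for $i<j$, while $\{\psi_0(x,c_i):i\le m\}\cup\{\psi_1(x,c_j):j>m\}$ is consistent. Our array provides this with $\psi_0=\varphi(x,y_b)$ and $\psi_1=\varphi(x,y_a)$ on $c_i=(a_i,b_i)$.

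\emph{From $\mathrm{SOP}_{3}$ to the array.} Using the same equivalence, I obtain $\psi_0,\psi_1$ and an indiscernible $(c_i)$. I then code both formulas into one: set $\varphi(x;y,z_1,z_2):=(z_1=z_2\wedge\psi_0(x,y))\vee(z_1\ne z_2\wedge\psi_1(x,y))$. With fixed $d\ne e$, I take $b_i=(c_i,d,d)$ and $a_i=(c_i,d,e)$. Then (1) and (2) translate directly. A degenerate one-element model can be handled by adding a dummy coordinate, or by noting that such theories lack $\mathrm{SOP}_3$ anyway.

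The main obstacle is establishing, or correctly citing, Shelah's two-formula characterization of $\mathrm{SOP}_{3}$. If I have to prove it myself, the hard direction is from two formulas to a loop-free $\theta$. There I would take $\theta(y,y')$ to say that the type $\{\psi_0(x,y),\psi_1(x,y')\}$ is consistent while $\{\psi_0(x,y'),\psi_1(x,y)\}$ is inconsistent. A 3-cycle must then be refuted by a partition argument on which side of each cut the realizations lie, following \cite{shelah_usvyatsov_2008}.
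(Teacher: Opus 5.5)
Your overall route is the paper's. The paper gives no argument beyond pointing to the two-formula characterization of $\mathrm{SOP}_3$ in Fact 1.3 of \cite{shelah_usvyatsov_2008} and remarking that it translates into the one-formula array. Your coding $\varphi(x;y,z_1,z_2)$, with $z_1=z_2$ selecting $\psi_0$ and $z_1\neq z_2$ selecting $\psi_1$, is a correct way to do that translation.

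However, the characterization you actually state is false. You ask that $\{\psi_0(x,c_i),\psi_1(x,c_j)\}$ be inconsistent for $i<j$, and also that $\{\psi_0(x,c_i)\}_{i\le m}\cup\{\psi_1(x,c_j)\}_{j>m}$ be consistent. With $i=m$, $j=m+1$ these contradict each other, so no theory satisfies your version. For the same reason your claim that the array provides it is wrong: by (1), $\varphi(x,b_i)\wedge\varphi(x,a_j)$ is consistent for $i<j$. The inconsistency clause must be $\{\psi_1(x,c_i),\psi_0(x,c_j)\}$ for $i<j$. That is the orientation you correctly used for (2) in your first paragraph, and the one your coding needs in order to produce (2). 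Also, as stated in \cite{shelah_usvyatsov_2008}, the characterization additionally requires $\psi_0(x,y)\wedge\psi_1(x,y)$ to be contradictory. The array does not supply this directly, so citing it for the array-to-$\mathrm{SOP}_3$ direction needs one more adjustment.

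The fallback you sketch for proving that direction yourself has a genuine gap. With $\psi_0=\varphi(x,y_b)$ and $\psi_1=\varphi(x,y_a)$, your final $\theta(y,y')$ is literally the crude formula you already rejected, and no partition argument can rescue it. The anti-looping clause must hold for all triples in $T$. A formula that records only pairwise (in)consistency of parameters can have $3$-cycles: nothing stops a theory with the array from also containing, say, a disjoint finite configuration with $\exists x(\psi_0(x,y_k)\wedge\psi_1(x,y_{k+1}))$ and $\neg\exists x(\psi_0(x,y_{k+1})\wedge\psi_1(x,y_k))$ for $k$ mod $3$.

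The missing idea is to put the realizations into the tuples. By (1) and compactness, pick $d_m\models\{\varphi(x,b_i)\}_{i\le m}\cup\{\varphi(x,a_i)\}_{i>m}$, let $e_m=d_ma_mb_m$, and set
\[\chi(x\,y\,z;\,x'\,y'\,z') := \varphi(x',z)\wedge\varphi(x,y')\wedge\neg\exists w\,\bigl(\varphi(w,y)\wedge\varphi(w,z')\bigr).\]
For $i<j$, $\chi(e_i,e_j)$ holds by the choice of $d_i,d_j$ and by (2). Now suppose $\chi(e^0,e^1)\wedge\chi(e^1,e^2)\wedge\chi(e^2,e^0)$ with $e^k=x_ky_kz_k$. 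The first two conjuncts give $\varphi(x_1,z_0)\wedge\varphi(x_1,y_2)$, while the third asserts $\neg\exists w(\varphi(w,y_2)\wedge\varphi(w,z_0))$. So $\chi$ witnesses $\mathrm{SOP}_3$, with no indiscernibles or citation needed.

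The other direction is just as direct. If $\theta$ witnesses $\mathrm{SOP}_3$ on $(c_q)_{q\in\mathbb{Q}}$, put $\psi_0(x,y)=\theta(y,x)$ and $\psi_1(x,y)=\theta(x,y)$. Then $c_{m+1/2}$ realizes the consistency conditions, and for $i<j$ the set $\{\theta(x,c_i),\theta(c_j,x)\}$ closes a $3$-cycle with $\theta(c_i,c_j)$, so it is inconsistent. Your coding then yields the array.
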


Moreover \cite{day2025results} showed that instability ($\mathrm{OP}$), as well as the independence property ($\mathrm{IP}$), well-known to be straightly definable, are in fact positively straightly definable. However, $\mathrm{SOP}$, as well as $\mathrm{SOP}_{n}$ for $n \geq 4$, have been shown by \cite{bailetti2024walk} (Example 5.7 there) not to be positively straightly definable.\footnote{It is thus unsurprising that the straight definitions for $\SOP_n$ arising from the poset definition we gave in Section \ref{New results on SOPn} involve negated instances of $\vphi$; otherwise, they would violate Bailetti's result.}

A contemporary development that has transformed the prevailing understanding of classification theory has been the discovery that there is at least one especially powerful equivalence between model-theoretic patterning properties--in particular, between two of the main positively straightly definable properties. There is precedent for implications between model-theoretic patterning properties: \cite{shelah1990classification} showed that an unstable theory either has the independence property or the strict order property, and also showed that a non-simple theory either has $\mathrm{TP}_{1}$ (equivalent to $\mathrm{SOP}_{2}$) or $\mathrm{TP}_{2}$. However, these are combinatorial results about formulas; i.e., they hold at the level of formulas up to Boolean combination (so for example, if a formula is unstable, it has a Boolean combination that either exhibits the independence property or the strict order property). By contrast, the following equivalence, which is proven by developing a theory of independence for $\mathrm{NSOP}_{2}$ theories, is not true at the level of formulas up to Boolean combination (as shown by \cite{AhnKim2024natp}, in Section 6 there). Answering a longstanding open question posed by \cite{Dzamonja2004} and mentioned earlier in \cite{shelah_what_2000} (also citing this work with Džamonja while it was in preparation), \cite{Mutchnik2026nsop2} shows:

\begin{fact}\label{sop1 equals sop2}
 Every theory with $\mathrm{SOP}_{1}$ has $\mathrm{SOP}_{2}$.
\end{fact}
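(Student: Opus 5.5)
This is a deep result, and we do not reprove it in full here; we only sketch the strategy of \cite{Mutchnik2026nsop2}. We argue by contraposition: assume $T$ is $\mathrm{NSOP}_{2}$, and show that $T$ is $\mathrm{NSOP}_{1}$. The target criterion is the characterization of $\mathrm{NSOP}_{1}$ due to Chernikov--Ramsey and Kaplan--Ramsey. It says that $T$ is $\mathrm{NSOP}_{1}$ as soon as there is an $\mathrm{Aut}(\mathbb{M}/M)$-invariant ternary relation $\ind$ over models with the following properties:
\begin{enumerate}
\item it satisfies existence, monotonicity, symmetry and strong finite character;
\item it satisfies the weak independence theorem: if $a \ind_M b$, $a' \ind_M c$, $b \ind_M c$ and $a \equiv_M a'$, then some $a''$ realizes $\tp(a/Mb) \cup \tp(a'/Mc)$ with $a'' \ind_M bc$.
\end{enumerate}
The whole proof is therefore an analysis of a suitable independence notion in an $\mathrm{NSOP}_{2}$ theory. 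The natural candidate, and the one used there, is \emph{Conant-independence}. Here $a \ind^{K^*}_M b$ means that no formula in $\tp(a/Mb)$ divides along \emph{every} $M$-invariant Morley sequence starting with $b$ (equivalently, every coheir sequence).

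Step one is a Kim's-lemma-style dichotomy for $\mathrm{NSOP}_{2}$ theories. Suppose $\varphi(x,b)$ $k$-divides along some invariant Morley sequence but is consistent along another. Then we build, by compactness and a tree-indexed induction, a binary tree $\{b_\eta\}_{\eta\in 2^{<\omega}}$ with the following properties:
\begin{enumerate}
\item each path is an invariant Morley sequence along which $\varphi$ is consistent;
\item incomparable nodes sit in Morley sequences of the dividing kind.
\end{enumerate}
This array would give $\mathrm{SOP}_{2}$ (in the $\mathrm{TP}_1$ form of Definition \ref{2-strict order property}), possibly after passing to a conjunction of $\varphi$ and using the fact that $\mathrm{SOP}_2=\mathrm{TP}_1$ holds up to conjunctions. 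The upshot is that in $\mathrm{NSOP}_{2}$ theories, Conant-dividing is witnessed by \emph{some well-chosen} family of sequences; the natural choice is tree Morley sequences built from coheirs. Step two uses this to prove \emph{symmetry} of $\ind^{K^*}$ over models, by the standard trick: interchange the roles of the two coordinates in an array built from a coheir and a heir sequence, and again extract a $\mathrm{TP}_1$ tree if symmetry fails.

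Step three is to prove a form of the weak independence theorem (amalgamation) for $\ind^{K^*}$. It suffices to obtain enough of it to run the Kaplan--Ramsey argument, for instance chain condition plus extension for Conant-independence. We would do this via a ``transitivity through tree Morley sequences'' argument, again extracting a $\mathrm{TP}_1$ configuration from any failure. With existence, monotonicity, symmetry, strong finite character and amalgamation in hand, the criterion yields $\mathrm{NSOP}_{1}$.

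The main obstacle is step three together with the symmetry of step two. Unlike in $\mathrm{NSOP}_{1}$ theories, we cannot assume Kim's lemma for arbitrary invariant Morley sequences. Every combinatorial construction must therefore produce an honest $\mathrm{TP}_1$ tree, whose inconsistency conditions hold for \emph{all} incomparable pairs, not just for $\varphi$-indiscernible sequences. This is where the bulk of the work in \cite{Mutchnik2026nsop2} lies. As remarked in the introduction, tracking the construction also shows that the $\mathrm{SOP}_{2}$ witness obtained can be taken to be an $\exists\forall$-formula in the original $\varphi(x,y)$.
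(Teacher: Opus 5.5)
Your outline names the right ingredients: Conant-independence, a Kim's lemma, symmetry, an amalgamation statement, then a combinatorial endgame. This matches the sketch of the original argument recalled in Appendix~\ref{formula complexity sop1 sop2}. However, the one concrete step you commit to, Step one, cannot serve as a step. Suppose a discrepancy between \emph{arbitrary} invariant Morley sequences (dividing along one, consistent along another) produced $\mathrm{SOP}_2$. Then every $\mathrm{NSOP}_2$ theory would satisfy Kim's lemma for Kim-dividing, which by Kaplan--Ramsey already characterizes $\mathrm{NSOP}_1$. So Step one alone would prove the Fact and Steps two and three would be idle. It would also contradict your own later remark that Kim's lemma for arbitrary invariant Morley sequences is unavailable.

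Nor does the tree-indexed induction deliver it. The natural construction (essentially Kaplan--Ramsey's derivation of $\mathrm{SOP}_1$ from a failure of Kim's lemma) only controls the comb pairs $b_{\eta_1\smallfrown\langle 1\rangle}, b_{\eta_2}$ with $\eta_2\unrhd\eta_1\smallfrown\langle 0\rangle$. For arbitrary invariant types, nothing forces every finite antichain to look like a Morley sequence in the dividing type while every path stays Morley in the other. Controlling all antichains is exactly the difference between $\mathrm{SOP}_1$ and $\mathrm{SOP}_2$, and your sketch has no idea for it. The missing ingredient is the special class of (strong) canonical coheirs. Kim's lemma is proved only for Morley sequences in them, in the one-sided form used in Appendix~\ref{formula complexity sop1 sop2}: a formula that does not Conant-divide is consistent along every Morley sequence in a strong canonical coheir.

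Step three is likewise without content, and your endgame is not the one described in the paper. The criterion you quote demands amalgamation over arbitrary $b\ind_M c$ with an \emph{independent} amalgam. That is the full independence theorem, although you call it weak, and neither ``chain condition plus extension'' nor ``transitivity through tree Morley sequences'' is shown to give it for $\ind^{K^*}$. The paper refers only to a weak independence theorem for Conant-independence, and in Appendix~\ref{formula complexity sop1 sop2} it is not fed into a black-box criterion.

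Instead, one fixes a strong canonical coheir $q$ and builds $\{b^1_ib^2_i\}_{i<\omega}$ by induction with three properties:
\begin{itemize}
\item each $J_n=(b^2_0,\dots,b^2_{n-1},b^1_n,b^1_{n+1},\dots)$ is a Morley sequence in $q$, so $\varphi$ is consistent along it by the canonical-coheir Kim's lemma;
\item each crossing pair $b^1_ib^2_j$ with $i\le j$ has the type over $M$ of the pair $b_0b_1$, on which $\varphi$ is inconsistent;
\item $b^1\ind^{K^*}_M b^2$ holds throughout.
\end{itemize}
The last property lets one add $b^1_{n+1}\models q|_{M\{b^1_ib^2_i\}_{i\le n}}$ and then $b^2_{n+1}$ via the weak independence theorem. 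The resulting array exhibits the positive $\mathrm{SOP}_3$ pattern of Fact~\ref{positive straight definition of sop3}, contradicting $\mathrm{NSOP}_2$ since $\mathrm{SOP}_3$ implies $\mathrm{SOP}_2$.

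So your proposal is a reasonable table of contents, but it lacks the canonical-coheir machinery on which that Kim's lemma, and everything after it, rests.
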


However, the following question, also posed by \cite{Dzamonja2004} and mentioned in \cite{shelah_what_2000}, remains one of the main open problems of model theory:

\begin{question}\label{is sop2 equal to sop3}
    Does every theory with $\mathrm{SOP}_{2}$ have $\mathrm{SOP}_{3}$?
\end{question}

So implications between model-theoretic patterning properties are of central interest, particularly within the setting of positively straightly definable properties. The results of this section will be about implications or equivalences between model-theoretic patterning properties in the setting of \textit{countably categorical theories}.

Fact \ref{sop1 equals sop2} says that, if a given theory has a formula $\varphi(x, y)$ with $\mathrm{SOP}_{1}$, it has is a formula $\varphi'(x, y)$ that exhibits $\mathrm{SOP}_{2}$. In a problem session at the 2023 meeting on neostability theory at BIRS, Byunghan Kim asked the informal question of whether the $\mathrm{SOP}_{2}$ formula $\varphi'(x, y)$ can be described in terms of the $\mathrm{SOP}_{1}$ formula $\varphi(x, y)$ (see \cite{BIRS23w5145} for a report on the BIRS conference where Byunghan Kim asked this question). In fact, a close analysis of the second author's proof that $\mathrm{SOP}_{1} = \mathrm{SOP}_{2}$ will demonstrate that, if $\varphi(x, y)$ exhibits $\mathrm{SOP}_{1}$ in a theory, there is a formula $\varphi'(x', y')$ exhibiting $\mathrm{SOP}_{2}$  which is a Boolean combination of existential and universal formulas in $\varphi'(x, y)$. Particularly, if a theory has a formula $\varphi(x', y')$ exhibiting $\mathrm{SOP}_{1}$, then it has a formula $\varphi'(x', y')$ exhibiting $\mathrm{SOP}_{2}$ which is a $\exists\forall$-formula in $\varphi(x, y)$. By a $\exists\forall$-formula in $\varphi(x, y)$, we mean a formula of the form $\exists u \forall v \psi(u, v, x', y')$ for tuples of variables $u, v, x', y'$, where $\psi(u, v, x', y')$ is a Boolean combination of instances of $\varphi(x, y)$.\footnote{Equivalently, $\varphi'(x', y')$, within the expansion by a definable predicate for $\varphi(x, y)$, is equivalent to a $\forall\exists$-formula in the reduct to the language consisting of that predicate.} We give more details about quantifier complexity in the proof that $\mathrm{SOP}_{1}$ is equal to $\mathrm{SOP}_{2}$ in Appendix \ref{formula complexity sop1 sop2}.

The main result of this section will be to show that a similar phenomenon applies to all equivalences or implications between positively straightly definable properties of theories, \textit{assuming countable categoricity}. In exchange for this requirement, we only need to assume that the implication holds at the level of countably categorical theories, though we are often interested in general implications, such as in Question \ref{is sop2 equal to sop3} above on $\mathrm{SOP}_{2}$ and $\mathrm{SOP}_{3}$.

\begin{theorem} \label{theorem on implications between positively straightly definable properties in countably categorical theories}
    Let $P$ and $Q$ be positively straightly definable properties, and suppose that for every countably categorical theory $T$,

    $$T \mathrm{\: has \:} P \Rightarrow T \mathrm{\: has \:} Q.$$

    Then for any countably categorical theory $T$, for every formula $\varphi(x, y)$ that exhibits $P$ in $T$, there is a formula $\varphi'(x', y')$ exhibiting $Q$ in $T$ which is a $\exists\forall$-formula in $\varphi(x, y)$.

\end{theorem}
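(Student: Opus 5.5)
The plan is to pass to the reduct of $T$ to the single relation defined by $\varphi(x,y)$, use the hypothesis inside that reduct, and then pull the resulting witness for $Q$ back to $T$ with control over its quantifier complexity.

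\textbf{Step 1 (the reduct).} Fix a countably categorical $T$ and a formula $\varphi(x,y)$ exhibiting $P$ in $T$. Let $M$ be the countable model of $T$, and let $R$ be a new relation symbol of arity $|x|+|y|$. Form the structure $N=(M,R^{N})$ with $R^{N}=\varphi(M)$. Since $\mathrm{Aut}(M)\le \mathrm{Aut}(N)$, the group $\mathrm{Aut}(N)$ is still oligomorphic, so $T_R=\mathrm{Th}(N)$ is countably categorical by Ryll-Nardzewski.

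Because $P$ is positively straightly definable, whether $R(x,y)$ exhibits $(\mathcal{C}_P,\mathcal{I}_P)$ depends only on which conjunctions of instances of $R$ are consistent. By compactness this amounts to finitary conditions: for each finite piece of the pattern there should exist parameters $b_i$ such that certain finite conjunctions of instances are satisfiable and others are not. Each such condition is a first-order sentence in the language of $R$, and it is true in $N$ exactly when it is true in $M$ for $\varphi$. Hence $T_R$ has $P$, witnessed by $R$ itself, and by hypothesis $T_R$ has $Q$, witnessed by some $\{R\}$-formula $\chi(x',y')$.

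\textbf{Step 2 (reducing quantifier complexity).} An arbitrary $\chi$ would pull back to $T$ with unbounded quantifier complexity in $\varphi$, so it must be replaced by an $\exists\forall$-formula in $R$. This is where Saracino's theorem and \cite{bodirsky2025taking} come in. Saracino's theorem says that a countably categorical theory has a model companion, and in fact an $\forall\exists$-axiomatizable $\omega$-categorical model complete companion. Following \cite{bodirsky2025taking}, the countable model of $T_R$ should be replaceable by an $\omega$-categorical model complete structure in which every formula is equivalent to an existential formula. Equivalently, every formula is equivalent to the negation of an existential formula, which is universal, and so an $\exists\forall$ formula suffices for pulling back.

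The concrete plan has three parts.
\begin{enumerate}
\item Let $T^{*}$ be the model companion of the universal (or $\forall\exists$) part of $T_R$. Its countable model $N^{*}$ embeds into $N$ and vice versa, or more precisely $N^{*}$ is the model-complete core or companion given by \cite{bodirsky2025taking}.
\item Argue that $N^{*}$ still has $P$ via $R$. For positive patterns, consistency of positive conjunctions is preserved upward under embeddings, and inconsistency is preserved because existential positive sentences transfer. So the choice of embeddings must be such that $N$ and $N^{*}$ satisfy the same existential sentences, that is, they are mutually embeddable. Conclude that $T^{*}$ has $Q$ via some formula, which by model completeness is equivalent to an existential formula $\exists u\,\theta$ with $\theta$ quantifier-free in $R$.
\item Transfer the exhibition of $Q$ back to $N$, replacing this formula by a suitable $\exists\forall$-formula: existential witnesses are taken in $N^{*}$, and a universal clause expresses that the witnesses behave as they would in $N^{*}$. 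Saracino's $\forall\exists$-axiomatization is what makes this expressible. By positivity of $Q$, consistency conditions transfer upward along the embedding and inconsistency conditions transfer downward.
\end{enumerate}

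\textbf{Step 3 (pulling back to $T$).} Substitute $\varphi$ for $R$ in the resulting $\exists\forall$ $\{R\}$-formula to obtain $\varphi'(x',y')$. Exhibition of $Q$ is again expressed by finitary sentences, which hold in $M$ exactly when they hold in $N$. So $\varphi'$ exhibits $Q$ in $T$ and is an $\exists\forall$-formula in $\varphi$.

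The main obstacle is Step 2, specifically moving the $Q$-witness between $N^{*}$ and $N$ while keeping it $\exists\forall$. The first thing to try is to take $N^{*}$ embedded in $N$ with $N$ existentially closed relative to it. Then one uses the quantifier-free type of the existential witnesses, plus a universal clause ensuring that inconsistency conditions survive, which is exactly where positivity of the pattern is needed.
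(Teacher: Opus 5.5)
\textbf{There is a genuine gap in Step 2, part 2.} There you assert that the model companion $T^{*}$ of the bare $\{R\}$-reduct $T_R$ still has $P$ via $R$, because ``inconsistency is preserved because existential positive sentences transfer.'' Consistency conditions do pass upward along an embedding of $N$ into a model of $T^{*}$. An inconsistency condition, however, is $\neg\exists x\bigwedge_{i\in Z^{+}}R(x,b_i)$, a universal statement about the particular $b_i$. Nothing stops the larger model from adding a common solution. Agreement of $N$ and $N^{*}$ on existential sentences does not help either: exhibiting a pattern is an $\exists\forall$ condition, since the same $b_i$ must witness both consistency and inconsistency.

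This really fails. Let $M$ be the Fra\"iss\'e limit of finite structures with predicates $X,Y$, an equivalence relation $E$ on $Y$, and $R\subseteq X\times Y$ such that each $x$ has exactly one $R$-neighbour in each $E$-class. (Add a function symbol picking that neighbour to get amalgamation and uniform local finiteness, so $M$ is $\omega$-categorical.)
\begin{itemize}
\item $R(x,y)$ exhibits $\mathrm{TP}_2$, with the rows given by distinct $E$-classes.
\item Every finite directed bipartite graph embeds into $(M,R)$: send the $Y$-vertices to distinct classes.
\item Hence the model companion of $T_R$ is the theory of the random bipartite graph. That theory is simple, so it has no $\mathrm{TP}_2$ at all.
\end{itemize}
So your appeal to $P\Rightarrow Q$ in $T^{*}$ has nothing to apply to.

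\textbf{The missing idea, which is how the paper proceeds, is to name inconsistency before passing to the model companion.} Add predicates $I_n(y_1,\ldots,y_n)$ for $\neg\exists x\bigwedge_{i}\varphi(x,y_i)$, and let $T_0$ be the reduct to $\{R\}\cup\{I_n\}_{n<\omega}$.
\begin{itemize}
\item Inconsistency conditions are now atomic, so they are preserved upward into a model of the model companion $T'$ of $T_0$.
\item The universal sentences $\forall x\,\bar y\,(I_n(\bar y)\to\neg\bigwedge_i R(x,y_i))$ of $T_0$ hold in $T'$, because models of $T'$ embed into models of $T_0$.
\item Therefore $R$ exhibits $P$ in $T'$.
\end{itemize}

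Your part 3 also misidentifies the mechanism. No extra ``universal clause'' is needed: by Fact~\ref{preservation under model companions}, the \emph{same} existential formula that exhibits $Q$ in $T'$ exhibits $Q$ in $T_0$. Inconsistency transfers by model completeness of $T'$, by sandwiching a model of $T_0$ between two models of $T'$.

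The $\forall$ in the final bound comes exactly from unfolding the $I_n$, which are universal in $\varphi$. Note that if your Step 2 worked as written, it would even yield a purely existential formula in $\varphi$.
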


Our proof uses results of \cite{bodirsky2025taking} and \cite{saracino1973model} on model companions of complete theories. The reader may be more familiar with model companions within the context of constructing concrete examples of first-order theories as model companions of a particular universal theory (possibly a universal expansion of a Morleyized base theory). For example, the theory of the random graph is constructed as the model companion of the theory of undirected graphs, a universal theory. However, throughout this paper we are implicitly assuming that a theory is complete, as is common when discussing first-order theories in general, especially within the context of model-theoretic patterning properties. Therefore, it may be useful to review the definition of the model companion of a complete theory. Note that this really is a nontrivial definition, because the theory $T$ in this definition is \textit{not} assumed to be Morleyized. 

\begin{definition}
\label{model companion of a complete theory}

Let $T$ be a (complete) theory; then a (complete) theory $T'$ is the \textit{model companion} of $T$ if:

\begin{itemize}
    \item every model of $T$ embeds as an induced substructure into a model of $T'$
    \item every model of $T'$ embeds as an induced substructure into a model of $T$
    \item the theory $T'$ is model complete (i.e., if $M \models T'$, $M' \models T'$ and $M \subset M'$, then $M \prec M'$).
\end{itemize}

\end{definition}

We now discuss preservation of positively straightly definable properties under model companions. \cite{bodirsky2025taking} claim to show that, for $\mathcal{P}$ any straightly definable property, if $T$ has a model companion $T'$, then if $T'$ has the property $\mathcal{P}$, $T$ also has the property $\mathcal{P}$. However, there appears to be an error in the proof of this result for straightly definable properties, as opposed to positively straightly definable properties.\footnote{The error is the claim within the proof of Theorem 3.22 of their paper that, for $\mathcal{P}^{+}$ the \textit{positivization} of a straightly definable property $\mathcal{P}$,  $\mathcal{P}^{+}$ implies $\mathcal{P}$ for the theory $T$. We give an outline of the translation of their arguments (which they give first for \textit{core companions} and then infer for model companions) into a direct argument for their claimed result on model companions. Let $(\mathcal{C}, \mathcal{I})$ be a straight pattern for $\mathcal{P}$. Define an \textit{obstruction} of a formula $\varphi(x)$ in a theory $T$ to be an \textit{existential} formula $\psi(x)$ such that $T \models \forall x(\psi(x) \to \neg\varphi(x))$. Then say a theory has the property $\mathcal{P}^{+}$ if there is a formula $\varphi(x, y)$, a set $B=(b_i)_{i<\omega}$ of $|y|$-length tuples from $M\models T$, and obstructions $\psi(x, y)$ of $\varphi(x, y)$, and $\theta_{(q,r)}(y_1, \dots, y_q, y_{q+1}, \dots, y_{q+r})$ of $\exists x \left( \bigwedge_{i=1}^q \phi(x; y_i) \wedge \bigwedge_{i=1}^r \psi(x; y_{q+i}) \right)$ for each $q, r < \omega$, such that

for all $(A^+,A^-)\in \calC$ and $(Z^+,Z^-)\in \calI$, we have 
    \[\{\vphi(x,b_i)\colon i\in A^+\}\cup \{\psi(x,b_j)\colon j\in A^-\}\] is consistent, and

    $$(b_{i})_{i \in Z^{+}}(b_{i})_{i \in Z^{-}} \models \theta_{(|Z^{+}|, |Z^{-}|)}(y_1, \dots, y_{|Z^{+}|}, y_{|Z^{+}|+1}, \dots, y_{|Z^{+}|+|Z^{-}|}).$$

    Now suppose a theory $T$ has a model companion $T^{+}$, and that $T^{+}$ has $\mathcal{P}$. It is well known that every formula in a model complete theory is equivalent to an existential formula. So there is an existential formula $\varphi(x, y)$ exhibiting $\mathcal{P}^{+}$ in $T'$, say with the obstruction $\psi(x, y)$ to $\varphi(x, y)$. Then we can show that $\varphi(x, y)$ exhibits $\mathcal{P}^{+}$ in $T$, also with the obstruction $\psi(x, y)$ to $\varphi(x, y)$. It is then claimed that the assertion that $T$ has $\mathcal{P}^{+}$ implies that $T$ has $\mathcal{P}$, as desired.

    However, if $\mathcal{P}$ is only assumed to be straightly definable and not positively straightly definable, this last claim is not justified. Specifically, while it is known that $T \models \forall x  y(\psi(x, y) \to \neg\varphi(x, y))$, it is not known that $T \models \forall x y(\psi(x, y) \leftrightarrow \neg\varphi(x, y))$, even if $\psi(x, y)$ is chosen so that $T' \models \forall x y(\psi(x, y) \leftrightarrow \neg\varphi(x, y))$. So in $T$, while the second condition of $\mathcal{P}^{+}$ does imply that $\{\vphi(x,b_i)\colon i\in Z^+\}\cup \{\psi(x,b_j)\colon j\in Z^-\}$ is inconsistent for $(Z^{+}, Z^{-}) \in \mathcal{I}$, that does not imply that $\{\vphi(x,b_i)\colon i\in Z^+\}\cup \{\neg\vphi(x,b_j)\colon j\in Z^-\}$ is inconsistent for $(Z^{+}, Z^{-}) \in \mathcal{I}$. If $\mathcal{P}$ is assumed to be \textit{positively} straightly definable, though, the claim that $\mathcal{P}^{+}$ implies $\mathcal{P}$ is justified: the second condition of $\mathcal{P}^{+}$ does imply that $\{\vphi(x,b_i)\colon i\in Z^+\}$ is inconsistent for $(Z^{+}, \emptyset) \in \mathcal{I}$.
    
    For a clarification of their proof, an upcoming preprint of the second author will contain an exposition of the result of \cite{bodirsky2025taking} on preservation of classification-theoretic properties under model companions. While that exposition will assume $|Z^{+}|=2$ for each $(Z^{+}, \emptyset) \in \mathcal{I}$, that will only be for reasons of notational simplicity, and the exposition will generalize to any positively straightly definable property.}

    This justifies why the results of this section, particularly Theorem \ref{theorem on implications between positively straightly definable properties in countably categorical theories}, are about \textit{positively} straightly definable properties, rather than straightly definable properties in general. (At the conclusion of this section, we will give some remarks applying the proof of Theorem \ref{theorem on implications between positively straightly definable properties in countably categorical theories} outside of the positively straightly definable context, but these will not be general results on straightly definable properties.)

Not only do \cite{bodirsky2025taking} give a correct proof of the fact that model companions preserve positively straightly definable properties; their proof even shows that, for the existential formula exhibiting a positively straightly definable property in the model companion of a theory, \textit{the same existential formula} exhibits that positively straightly definable property in the original theory. Note in the following that if a model complete theory has a positively straightly definable property $P$, some existential formula exhibits $P$, because every formula in a model complete theory is equivalent to an existential formula.

\begin{fact}\label{preservation under model companions}
(\cite{bodirsky2025taking}, proof of Theorem 3.22)

Let a theory $T$ have a model companion $T'$, and let $P$ be a positively straightly definable property. Suppose that $T'$ has $P$, exhibited by the \emph{existential} formula $\varphi(x, y)$. Then $\varphi(x, y)$ also exhibits $P$ in $T$.
\end{fact}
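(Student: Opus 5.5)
The plan is to transfer the witnessing parameters from a model of $T'$ into a model of $T$ along the embeddings given by Definition \ref{model companion of a complete theory}. Consistency conditions are then handled by upward persistence of existential formulas. Inconsistency conditions are handled by passing back to $T'$ and using model completeness. Because $P$ is positively straightly definable, with positive pattern $(\calC_P,\calI_P)$, only positive instances of $\varphi$ occur, and positive instances of an existential formula go up along embeddings. This is exactly where positivity is used.

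Fix $N\models T'$ and parameters $(b_i)_{i<\omega}$ in $N$ witnessing that $\varphi(x,y)$ exhibits $(\calC_P,\calI_P)$ in $T'$. By the second clause of Definition \ref{model companion of a complete theory}, $N$ embeds as an induced substructure into some $M\models T$; identify $N$ with its image, so the $b_i$ are tuples of $M$. We claim the same $(b_i)_{i<\omega}$, viewed in $M$, witness that $\varphi$ exhibits the pattern in $T$.

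\emph{Consistency conditions.} Let $(A^+,\emptyset)\in\calC_P$. By compactness it suffices to realize in $M$ each finite subset $\{\varphi(x,b_i)\colon i\in F\}$ with $F\subseteq A^+$ finite. Consistency in $T'$ gives some $a$ in an elementary extension of $N$ realizing this finite set, so $N\models\exists x\bigwedge_{i\in F}\varphi(x,b_i)$, which is again an existential formula. Since $N\subseteq M$ is an induced substructure and existential formulas persist upward along embeddings, $M\models\exists x\bigwedge_{i\in F}\varphi(x,b_i)$. Hence the whole set is finitely satisfiable in $M$, and therefore consistent.

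\emph{Inconsistency conditions.} This is the main step. Let $(Z^+,\emptyset)\in\calI_P$. By compactness, inconsistency of $\{\varphi(x,b_i)\colon i\in Z^+\}$ in $T'$ means there is a finite $F\subseteq Z^+$ with $N\models\neg\exists x\bigwedge_{i\in F}\varphi(x,b_i)$. We must show the same finite set is not realized in any elementary extension of $M$. Suppose instead that $a$ in some $M^*\succeq M$ realizes it. Then $M^*\models T$, so by the first clause of Definition \ref{model companion of a complete theory}, $M^*$ embeds as an induced substructure into some $N^*\models T'$. Since each $\varphi(x,b_i)$ is existential, $N^*\models\bigwedge_{i\in F}\varphi(a,b_i)$. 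We now have a chain of induced substructures $N\subseteq M\subseteq M^*\subseteq N^*$ whose two ends are models of $T'$. By model completeness of $T'$, $N\preceq N^*$. Hence $N\models\exists x\bigwedge_{i\in F}\varphi(x,b_i)$, a contradiction. So $\{\varphi(x,b_i)\colon i\in Z^+\}$ is inconsistent in $T$.

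The delicate point is precisely this last argument. Upward persistence alone only transfers consistency; inconsistency must be pulled back through the detour to $N^*$, and this detour needs both directions of mutual embeddability together with model completeness. The argument also shows why negated instances would break it: a clause $\neg\varphi(x,b_j)$ is universal rather than existential and would not persist from $M^*$ to $N^*$. This matches the caveat discussed in the footnote above.
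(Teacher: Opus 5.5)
Your proof is correct. It takes a semantic route, whereas the argument the paper attributes to \cite{bodirsky2025taking} (sketched in the footnote before Fact~\ref{preservation under model companions}) is syntactic.

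In that argument, each inconsistency condition is certified by an existential ``obstruction'' $\theta(y_1,\ldots,y_q)$ with $T'\models\forall\bar y\,(\theta(\bar y)\to\neg\exists x\bigwedge_{i}\varphi(x,y_i))$. Such a $\theta$ exists because, in the model complete theory $T'$, the universal formula $\neg\exists x\bigwedge_{i}\varphi(x,y_i)$ is equivalent to an existential one. The $b_i$ satisfy $\theta$ in $N$, hence in $M$ by upward persistence. The implication is a universal sentence, so it holds in $T$ because every model of $T$ embeds in a model of $T'$.

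Your chain $N\subseteq M\subseteq M^*\subseteq N^*$ with $N\preceq N^*$ is the semantic counterpart of these two steps. The bare definition of model completeness replaces the theorem that every formula is equivalent to an existential one. The round trip through $N^*$ replaces the transfer of universal consequences from $T'$ to $T$.

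Your version is more direct and needs no auxiliary formulas. You could even drop $M^*$: once you have reduced to the finite set $F$, it suffices to rule out a witness in $M$ itself and embed $M$ into $N^*$.

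The obstruction version instead produces an explicit existential certificate of inconsistency. That is what lets the cited work package the argument as the intermediate property $\mathcal{P}^{+}$. It also makes transparent why negated instances break the argument: the gap between $T\models\psi\to\neg\varphi$ and $T\models\psi\leftrightarrow\neg\varphi$.
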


The place in the proof of Theorem \ref{theorem on implications between positively straightly definable properties in countably categorical theories} where we will use countable categoricity will be in applying the following fact, due to \cite{saracino1973model}:

\begin{fact}\label{countably categorical theories have model companions}

Every countably categorical theory has a countably categorical model companion.

\end{fact}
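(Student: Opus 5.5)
The plan is to construct the model companion explicitly as the theory $T^*$ of the existentially closed models of $T_\forall$, the universal consequences of $T$. Countable categoricity will be used through Ryll-Nardzewski to show that $T^*$ is elementary, model complete, complete, and again countably categorical. Recall that models of $T_\forall$ are exactly the substructures of models of $T$.

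Fix $n$. By Ryll-Nardzewski, $T$ has only finitely many complete $n$-types $q_1,\dots,q_k$. Let $\Sigma_q$ be the set of existential formulas in $q$. If $\bar a$ is an $n$-tuple in some $A\models T_\forall$, extend $A$ to some $N\models T$. Since existential formulas go up, the existential type of $\bar a$ in $A$ is contained in $\Sigma_{\mathrm{tp}_N(\bar a)}$, so it lies below one of finitely many sets. Call $p$ \emph{maximal} if $\Sigma_p$ is $\subseteq$-maximal among $\Sigma_{q_1},\dots,\Sigma_{q_k}$. For each maximal $p$ and each $q$ with $\Sigma_q\not\supseteq\Sigma_p$, pick $\delta_{p,q}\in\Sigma_p\setminus\Sigma_q$, and let $\delta_p$ be the conjunction of these $\delta_{p,q}$, which is again existential. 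Let $T^*$ consist of the following:
\begin{itemize}
\item $T_\forall$;
\item for each $n$, the axiom $\forall\bar x\bigvee_{p\text{ maximal}}\delta_p(\bar x)$;
\item for each maximal $p$ and each $\psi\in\Sigma_p$, the axiom $\forall\bar x(\delta_p(\bar x)\to\psi(\bar x))$.
\end{itemize}

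The first main step is to show that the models of $T^*$ are exactly the existentially closed models of $T_\forall$, and that in such models every tuple has existential type exactly $\Sigma_p$ for some maximal $p$.

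\emph{Models of $T^*$.} Let $A\models T^*$ and $\bar a\in A$. Some $\delta_p(\bar a)$ holds in $A$, so by the axioms $\Sigma_p\subseteq$ (existential type of $\bar a$ in $A$). For any extension $A\subseteq B\models T_\forall$, embed $B$ in some $N\models T$. The existential type of $\bar a$ in $B$ is then squeezed between $\Sigma_p$ and $\Sigma_q$, where $q=\mathrm{tp}_N(\bar a)$. Maximality of $p$ forces $\Sigma_q=\Sigma_p$, so $B$ satisfies no new existential formula about $\bar a$. Hence $A$ is existentially closed.

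\emph{Existentially closed models.} Conversely, let $A$ be existentially closed and $\bar a\in A$. Choose $N\supseteq A$ with $N\models T$ and $q=\mathrm{tp}_N(\bar a)$. By existential closedness the existential type of $\bar a$ in $A$ is $\Sigma_q$. If $\Sigma_q\subsetneq\Sigma_{p}$ for some $p$, realize $p$ by $\bar b$ in a model $N'\models T$. I would then use that $A$ together with $\bar a\mapsto\bar b$ embeds into a model of $T_\forall$. The expected route is that the universal type of $\bar a$ in $A$ is implied by $T_\forall\cup\Sigma_q$-style data, using the standard characterization that $A$ is existentially closed iff every existential formula false at $\bar a$ is refuted by a universal formula true at $\bar a$. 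That would contradict existential closedness, so $q$ is maximal. Then $\delta_q(\bar a)$ holds. Moreover, if $\delta_p(\bar a)$ holds, the choice of the $\delta_{p,q'}$ gives $\Sigma_q\supseteq\Sigma_p$, so $p=q$ by maximality, and the remaining axioms hold.

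I expect this maximality argument to be the main obstacle, and I would handle it carefully via the universal/existential duality criterion.

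With the characterization in hand, the remaining steps are routine. Every model of $T_\forall$, in particular every model of $T$, embeds into an existentially closed one by the usual union-of-chains construction, since $T_\forall$ is inductive. Every model of $T^*$ is a model of $T_\forall$ and so embeds into a model of $T$. For model completeness, if $A\subseteq B$ are models of $T^*$, then each $\bar a\in A$ has the same maximal existential type $\Sigma_p$ in both, so $A\prec_\exists B$; Robinson's test then gives model completeness.

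For completeness of $T^*$, note that $T_\forall$ has the joint embedding property: any two models of $T_\forall$ embed into models of the complete theory $T$, hence into a common model. Joint embedding together with model completeness yields that $T^*$ is complete.

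Finally, by model completeness, complete $n$-types of $T^*$ are determined by their existential parts. These are among the finitely many sets $\Sigma_p$, so $T^*$ has finitely many $n$-types for each $n$. By Ryll-Nardzewski, $T^*$ is countably categorical, which completes the proof of Fact \ref{countably categorical theories have model companions}.
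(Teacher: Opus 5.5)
The paper gives no proof of this statement; it is quoted as a black box from Saracino (1973), so there is no in-paper argument to compare against. Your argument is a correct, self-contained proof once the step you flagged is filled in. The other steps are sound:
\begin{itemize}
\item axiomatizing ``every tuple realizes a $\subseteq$-maximal existential type'' via the $\delta_p$;
\item checking that models of $T^*$ are existentially closed;
\item Robinson's test for model completeness;
\item joint embedding of $T_\forall$ for completeness;
\item Ryll-Nardzewski again for countable categoricity.
\end{itemize}
One minor slip: where you write ``$p=q$ by maximality'' you mean $\Sigma_p=\Sigma_q$. Distinct types can share an existential part, which is harmless here.

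The step you flag as the main obstacle is easy, but the justification you sketch is muddled. The criterion you quote is misstated. Every existential $\phi$ false at $\bar a$ is trivially refuted by a universal formula true at $\bar a$, namely $\neg\phi$. The correct criterion is that $\phi$ is refuted modulo $T_\forall$ by an \emph{existential} formula true at $\bar a$. Moreover, the embedding you want needs no information about the universal type of $\bar a$.

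The existential type of $\bar a$ in $A$ is $\Sigma_q\subseteq\Sigma_p$, and $\Sigma_p$ is the existential type of $\bar b$ in $N'$. Apply compactness to the atomic diagram of $A$ (with $\bar a$ renamed to $\bar b$) together with the elementary diagram of $N'$. A finite piece $\theta(\bar a,\bar c)$ yields $\exists\bar z\,\theta(\bar x,\bar z)\in\Sigma_q\subseteq\Sigma_p$, which holds of $\bar b$ in $N'$. So $A$ embeds into some $N''\succeq N'$ with $\bar a\mapsto\bar b$. Any formula in $\Sigma_p\setminus\Sigma_q$ is then an existential formula true of $\bar a$ in the extension $N''\models T_\forall$ but false in $A$, contradicting existential closedness.

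Alternatively, the corrected criterion gives $T_\forall\cup\Sigma_q(\bar x)\vdash\neg\phi(\bar x)$ for every existential $\phi\notin\Sigma_q$. Since $\bar b$ satisfies $\Sigma_q$ in $N'\models T_\forall$, its existential type is contained in $\Sigma_q$, which contradicts $\Sigma_q\subsetneq\Sigma_p$ directly. Either argument closes the step.
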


We are now ready to prove the main theorem of this section, which will be a consequence of the two previous facts.

\begin{proof}(of Theorem \ref{theorem on implications between positively straightly definable properties in countably categorical theories}) Let $P$ be a positively straightly definable property and $T$ a countably categorical theory in which $\varphi(x, y)$ exhibits $P$, and let $Q$ be another positively straightly definable property such that $P$ implies $Q$ for countably categorical theories. We show that there is a formula $\varphi'(x', y')$ which is a $\exists\forall$-formula in $\varphi(x, y)$ and which exhibits $Q$ in $T$. Let $R(x,y)$ be a definable predicate for $\varphi(x,y)$, and for $n < \omega$, let $I_{n}(y_{1}, \ldots, y_{n})$ be a definable predicate for $\neg \exists x \bigwedge^{n}_{i=1} \varphi(x, y_{n})$. Let $T_{0}$ be the reduct of the expansion of $T$ by these definable predicates to the language consisting of just the predicates $R$ and $I_{n}$ for $n < \omega$. Then $T_{0}$ will be countably categorical, so by Fact \ref{countably categorical theories have model companions} it will have a countably categorical model companion $T'$. We show the following claim:

\begin{claim}
The model companion $T'$ of $T_{0}$ has $P$.
\end{claim}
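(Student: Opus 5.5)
We need to show that $T'$ has $P$. The plan is to move $P$ from $T$ to $T_0$ and then from $T_0$ to $T'$, using the fact that the inconsistency conditions of $P$ are named by the quantifier-free predicates $I_n$.

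First, $T_0$ has $P$, exhibited by $R(x,y)$. Work in a model $M \models T$ and its reduct $M_0 \models T_0$, and take the parameters $(b_i)_{i<\omega}$ witnessing that $\varphi(x,y)$ exhibits the positive pattern $(\calC_P,\calI_P)$. Since $R$ is interpreted as $\varphi$, consistency and inconsistency of finite subsets of $\{R(x,b_i)\}$ mean the same thing in $M_0$ as in $M$. (Consistency is witnessed by an element, finite inconsistency is a first-order statement about the $b_i$, and infinite conditions reduce to finite ones by compactness.)

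Second, we transfer $P$ to $T'$. Fix $M_0 \models T_0$ with witnesses $(b_i)$ for $R$, chosen $\aleph_0$-saturated if needed. By the definition of model companion, $M_0$ embeds as an induced substructure into some $N \models T'$. Positive consistency conditions go up: if $\{R(x,b_i)\colon i\in A^+\}$ is realized in $M_0$ (finite subsets suffice by compactness), the same element realizes it in $N$, because the embedding preserves $R$. Inconsistency conditions are handled through the predicates $I_n$. For each finite $Z\subseteq Z^+$ with $(Z^+,\emptyset)\in\calI_P$, compactness gives $M_0\models I_{|Z|}(b_{i_1},\dots,b_{i_{|Z|}})$. (Here $I_n$ should define $\neg\exists x\bigwedge_{j}\varphi(x,y_j)$; the index in the displayed definition is a typo.) The embedding preserves $I_n$ since it is atomic, so $N\models I_n(\bar b)$.

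It remains to see that $T'\models \forall \bar y\,(I_n(\bar y)\to\neg\exists x\bigwedge_j R(x,y_j))$. That sentence holds in $T_0$ and is universal. Every model of $T'$ embeds into a model of $T_0$, and universal sentences are preserved under substructures, so every universal consequence of $T_0$ holds in $T'$. Hence $\{R(x,b_i)\colon i\in Z\}$ is inconsistent in $N$, and $R$ exhibits $P$ in $T'$.

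The main obstacle is the inconsistency direction, which is exactly why the $I_n$ were added to the language. Without them, inconsistency in $M_0$ need not survive in the larger structure $N$, where new elements could realize the conjunction. With them, inconsistency becomes a quantifier-free fact about the parameters, together with a universal axiom that passes to $T'$.

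Positivity of the pattern is what makes this work: every condition is either upward-preserved existential consistency or atomic-coded inconsistency. A negated instance $\neg R$ in a consistency condition would not be preserved in general. Countable categoricity of $T_0$ is used only to guarantee, via Fact \ref{countably categorical theories have model companions}, that the model companion $T'$ exists; the claim itself needs no further use of it.
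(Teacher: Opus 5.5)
Your argument is correct and is essentially the paper's proof. Both take witnesses in a model of $T_0$, where each inconsistency condition is recorded by the atomic predicate $I_n$, and embed that model into a model of $T'$, so the realizations of the consistency conditions and the $I_n$-facts carry over. Both also pass the universal sentence $\forall x\,\forall \bar y\,(I_n(\bar y)\to\neg\bigwedge_j R(x,y_j))$ from $T_0$ down to $T'$, because every model of $T'$ embeds into a model of $T_0$.

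Your compactness treatment of infinite $A^+$ and $Z^+$ is more careful than the paper's. One wording fix: ``for each finite $Z\subseteq Z^+$'' should read ``for some finite $Z\subseteq Z^+$'', since compactness only provides one inconsistent finite subset, and that is all you need.
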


\begin{proof}(of claim)
We will show in particular that $R(x, y)$ exhibits $P$ in $T'$. The proof of this claim will formally resemble some arguments in \cite{bodirsky2025taking} for Fact \ref{preservation under model companions}.

First, by definition of $R$ and the $I_{n}$, for $n \leq \omega$ there are no $a, b_{1}, \ldots, b_{n}$ in any model of $M\models T_{0}$ such that $M \models \bigwedge^{n}_{i=1} \varphi(a, b_{i})$. So because every model of $T'$ embeds into a model of $T_{0}$, there are also no $a, b_{1}, \ldots, b_{n}$ in any model $M' \models T'$ such that $M' \models \bigwedge^{n}_{i=1} \varphi(a, b_{i})$. 

Second, by definition of $R$ and the $I_{n}$, and the assumption that $\varphi(x, y)$ exhibits $P$ in $T$, for $(\mathcal{C}, \mathcal{I})$ the straight pattern defining $P$, there are $\{b_{i}\}_{i < \omega}$ in a model $M \models T_{0}$ such that, for $(A^{+}, \emptyset) \in \mathcal{C}$, $\{R(x, b_{i})\}_{i \in A^{+}}$ is consistent, while, for $(Z^{+}, \emptyset) \in \mathcal{I}$, $M \models I_{n}(b_{i_{1}}, \ldots b_{i_{n}})$ for $i_{1}, \ldots i_{n}$ an enumeration of $Z^{+}$. It follows that there are $\{b_{i}\}_{i < \omega}$, $\{a_{A^{+}}\}_{(A^{+}, \emptyset) \in \mathcal{C}}$ in a model $M \models T_{0}$ such that, for $(A^{+}, \emptyset) \in \mathcal{C}$, $a_{A^{+}} \models \{R(x, b_{i})\}_{i \in A^{+}}$, while, for $(Z^{+}, 0) \in \mathcal{I}$, $M \models I_{n}(b_{i_{1}}, \ldots b_{i_{n}})$ for $i_{1}, \ldots i_{n}$ an enumeration of $Z^{+}$. Because every model of $T_{0}$ embeds into a model of $T'$, there are $\{b_{i}\}_{i < \omega}$, $\{a_{A^{+}}\}_{(A^{+}, \emptyset) \in \mathcal{C}}$ in a model $M' \models T'$ such that, for $(A^{+}, \emptyset) \in \mathcal{C}$, $a_{A^{+}} \models \{R(x, b_{i})\}_{i \in A^{+}}$, while, for $(Z^{+}, 0) \in \mathcal{I}$, $M' \models I_{n}(b_{i_{1}}, \ldots b_{i_{n}})$ for $i_{1}, \ldots i_{n}$ an enumeration of $Z^{+}$. Then for $(A^{+}, \emptyset) \in \mathcal{C}$, $\{R(x, b_{i})\}_{i \in A^{+}}$ will be consistent, realized by $a_{A^{+}}$. However, for $(Z^{+}, \emptyset) \in \mathcal{I}$, $\{R(x, b_{i})\}_{i \in Z^{+}}$ will be inconsistent, by the previous paragraph. So $R(x, y)$ will in fact exhbit $P$ in $T'$.

\end{proof}

So by the claim, and by assumption, $T'$ has $Q$. As dicussed before the statement of Fact \ref{preservation under model companions}, because $T'$ is model complete, so every formula is equivalent to an existential formula in $T$, some existential formula $\varphi'(x', y')$ exhibits $Q$ in $T'$. Then by Fact \ref{preservation under model companions}, $\varphi'(x', y')$ exhibits $Q$ in $T_{0}$. So $\varphi'(x', y')$ exhibits $Q$ in the expansion of $T$ by the definable predicates $R$ and $I_{n}$. But $\varphi'(x', y')$ is an existential formula in the language with predicates for $R$ and the $I_{n}$. And $R$ and the $I_{n}$  both have definitions in $T$ equivalent to formulas of the form $\forall \overline{u} \psi(\overline{u}, \overline{v})$, where $\psi(\overline{u}, \overline{v})$ is a Boolean combination of instances of $\varphi(x, y)$. So $\varphi'(x', y')$ is equivalent to a $\exists\forall$-formula in $\varphi(x, y)$. And because $\varphi'(x', y')$  exhibits $Q$ in the expansion of $T$, this $\exists\forall$-formula in $\varphi(x, y)$ will exhibit $Q$ in $T$, as desired.

\end{proof}

Having proven Theorem \ref{theorem on implications between positively straightly definable properties in countably categorical theories}, we next give two applications of this general phenomenon within positively straightly definable properties to open problems in classification theory. We start with the question of whether every theory with $\mathrm{SOP}_{2}$  has $\mathrm{SOP}_{3}$, Question \ref{is sop2 equal to sop3} above. We see that, if the answer to this open question is yes, then in the case of countably categorical theories this answer is exhibited at the level of $\exists\forall$-formulas. Specifcally, as a corollary of Theorem \ref{theorem on implications between positively straightly definable properties in countably categorical theories}, we have the following. (Here, when we say a formula ``exhibits $\mathrm{SOP}_{3}$", let us say that this means exhibiting the standard \textit{positive straight definition} of $\mathrm{SOP}_{3}$ from Fact \ref{positive straight definition of sop3} above. Stated in this form, the corollary follows directly from the theorem. However, by Remark \ref{preservation of sopn} below that the properties $\mathrm{SOP}_{n}$, according to their original definitions in terms of $n$-cycles, are preserved under model companions by \cite{bodirsky2025taking}, there would also be no harm in defining ``exhibit $\mathrm{SOP}_{3}$" in terms of the original definition in terms of $3$-cycles (Definition \ref{definition of sopn}).)

\begin{corollary}
Suppose that every theory with $\mathrm{SOP}_{2}$  has $\mathrm{SOP}_{3}$. Then if $T$ is countably categorical, and $\varphi(x, y)$ exhibits $\mathrm{SOP}_{2}$ in $T$, there is some $\exists\forall$-formula $\varphi'(x', y')$ in $\varphi(x, y)$ exhibiting $\mathrm{SOP}_{3}$ in $T$.
\end{corollary}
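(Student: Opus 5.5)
This corollary is an instance of Theorem \ref{theorem on implications between positively straightly definable properties in countably categorical theories}, taking $P = \mathrm{SOP}_{2}$ and $Q = \mathrm{SOP}_{3}$. The plan is to verify the theorem's two hypotheses and then read off its conclusion.

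First, both properties must be positively straightly definable. For $P = \mathrm{SOP}_{2}$, Definition \ref{2-strict order property} can be rewritten as a positive pattern. Fix a bijection between $2^{<\omega}$ and $\omega$. The consistency conditions are the pairs $(\{\sigma\upharpoonright n\}_{n<m}, \emptyset)$ for $\sigma \in 2^{\omega}$ and $m<\omega$; finite approximations suffice by compactness, or one can take the infinite path sets directly, since straight patterns permit infinite $X^{+}$. The inconsistency conditions are the pairs $(\{\eta_{1}, \eta_{2}\}, \emptyset)$ for incomparable $\eta_{1}, \eta_{2}$. No negative instances occur, so this is a positive pattern, and exhibiting it is literally the definition of a formula exhibiting $\mathrm{SOP}_{2}$. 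For $Q = \mathrm{SOP}_{3}$, interleave the array $\{a_{i}, b_{i}\}_{i<\omega}$ of Fact \ref{positive straight definition of sop3} into a single $\omega$-indexed sequence. Clauses (1) and (2) then become positive consistency and inconsistency conditions. By that fact, having $\mathrm{SOP}_{3}$ is equivalent to exhibiting this positive pattern, and this is the sense of ``exhibits $\mathrm{SOP}_{3}$'' fixed in the paragraph preceding the corollary.

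Second, the implication hypothesis must hold. We assume that every theory with $\mathrm{SOP}_{2}$ has $\mathrm{SOP}_{3}$, so in particular every countably categorical theory with $P$ has $Q$.

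Theorem \ref{theorem on implications between positively straightly definable properties in countably categorical theories} now applies. For countably categorical $T$ and $\varphi(x,y)$ exhibiting $\mathrm{SOP}_{2}$ in $T$, it yields an $\exists\forall$-formula $\varphi'(x',y')$ in $\varphi(x,y)$ that exhibits the chosen positive pattern for $\mathrm{SOP}_{3}$ in $T$, which is the desired conclusion.

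The only real work is in the first step: confirming that the $\mathrm{SOP}_{2}$ and $\mathrm{SOP}_{3}$ patterns are positive straight patterns in the precise sense of Definition \ref{positively straightly definable property}, which amounts to routine re-indexing onto $\omega$. If one instead insists on ``exhibits $\mathrm{SOP}_{3}$'' in the $3$-cycle sense of Definition \ref{definition of sopn}, an extra step is needed. One would rerun the proof of the theorem using the preservation of $\mathrm{SOP}_{n}$ under model companions from Remark \ref{preservation of sopn}. I would avoid that route and keep to the positive definition.
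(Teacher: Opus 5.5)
Your proposal is correct and matches the paper's argument: the paper also obtains the corollary as a direct instance of Theorem~\ref{theorem on implications between positively straightly definable properties in countably categorical theories}, with $P=\mathrm{SOP}_{2}$ and with $Q=\mathrm{SOP}_{3}$ read as exhibiting the positive pattern of Fact~\ref{positive straight definition of sop3}. Your closing remark, that the $3$-cycle reading would instead need the model-companion preservation from Remark~\ref{preservation of sopn}, is the same caveat the paper states just before the corollary.
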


Our next application is to a problem which is specific to the countably categorical setting. This problem involves the property of being \textit{low}, which is also defined in terms of model-theoretic patterning properties:

\begin{definition}
A theory is \textit{low} if there is no formula $\varphi(x, y)$, strictly increasing sequence $k_{0} < \ldots < k_{n} < \ldots < \omega$ and $\{b^{n}_{i}\}_{i, n < \omega}$ such that for each $n< \omega$ and $i_{1}, \ldots, i_{k_{n}} < \omega$, $\{\varphi(x, b^{n}_{i_{j}})\}^{k_{n}}_{j =1}$ is consistent, but for each $n< \omega$ and distinct $i_{1}, \ldots, i_{k_{n}+1} < \omega$, $\{\varphi(x, b^{n}_{i_{j}})\}^{k_{n}+1}_{j =1}$ is inconsistent.
\end{definition}

So, reindexing, being nonlow is equivalent to a disjunction $\bigvee P_{i}$ of positively straightly definable properties of theories $P_{i}$, where the $P_{i}$ correspond to the strictly increasing sequences $k_{0} < \ldots < k_{n} < \ldots < \omega$. An open question posed by \cite{palacin2012omega} is:

\begin{question}
Is every countably categorical simple theory low?\footnote{It is not even known whether every countably categorical $\mathrm{NSOP}_{1}$ theory is low, or whether every countably categorical $\mathrm{NTP}_{2}$ theory is low. Of course, an analogous statement to the below Corollary \ref{ea-tree for countably categorical nonlow} holds in either case, because both of these properties are positively straightly definable. }
\end{question}

If the answer is yes, then each of the positively straightly definable properties $P_{i}$ implies the tree property in the countably categorical setting. So we see as a corollary of Theorem \ref{theorem on implications between positively straightly definable properties in countably categorical theories} that if the answer to this question is yes, this answer is exhibited at the level of $\exists\forall$-formulas.

\begin{corollary}\label{ea-tree for countably categorical nonlow}
Suppose that every countably categorical simple theory is low. Then if $T$ is countably categorical, and $\varphi(x, y)$ exhibits nonlowness in $T$, there is some $\exists\forall$-formula $\varphi'(x', y')$ in $\varphi(x, y)$ exhibiting the tree property in $T$.    
\end{corollary}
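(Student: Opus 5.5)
The plan is to reduce Corollary \ref{ea-tree for countably categorical nonlow} to Theorem \ref{theorem on implications between positively straightly definable properties in countably categorical theories}. The wrinkle is that nonlowness is not a single positively straightly definable property but a disjunction $\bigvee_i P_i$. So I would apply the theorem separately to each disjunct $P_i$, taking $Q$ to be $\mathrm{TP}$, which is positively straightly definable by Definition \ref{tree property} together with Shelah's reduction to $k=2$.

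First I would make the disjunction precise. For a strictly increasing sequence $\bar k = (k_n)_{n<\omega}$, let $P_{\bar k}$ be given by the following positive pattern. Reindex $\omega \times \omega$ as $\omega$, writing $b^n_i$ for the parameter at index $\langle n,i\rangle$. The consistency conditions are $(\{\langle n,i_1\rangle,\dots,\langle n,i_{k_n}\rangle\},\emptyset)$ for all $n$ and all $i_1,\dots,i_{k_n}$. The inconsistency conditions are $(\{\langle n,i_1\rangle,\dots,\langle n,i_{k_n+1}\rangle\},\emptyset)$ for all $n$ and all distinct $i_1,\dots,i_{k_n+1}$. Straight from the definition of lowness, a formula $\varphi(x,y)$ witnesses nonlowness exactly when it exhibits $P_{\bar k}$ for some $\bar k$. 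In particular, if $\varphi$ exhibits nonlowness in $T$, fix a sequence $\bar k$ with $\varphi$ exhibiting $P_{\bar k}$.

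Second, I would check the hypothesis of the theorem for the pair $P=P_{\bar k}$, $Q=\mathrm{TP}$. Let $T_1$ be any countably categorical theory with $P_{\bar k}$. Then $T_1$ is nonlow. Under the hypothesis of the corollary, every countably categorical simple theory is low, so $T_1$ is not simple, that is, $T_1$ has $\mathrm{TP}$. Thus $P_{\bar k}$ implies $\mathrm{TP}$ for all countably categorical theories. Applying Theorem \ref{theorem on implications between positively straightly definable properties in countably categorical theories} to our $T$ and $\varphi$ then gives an $\exists\forall$-formula $\varphi'(x',y')$ in $\varphi(x,y)$ exhibiting $\mathrm{TP}$ in $T$.

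The only point needing care is that $\mathrm{TP}$ must be stated as a single positive pattern, since the definition of $\mathrm{TP}$ quantifies over $k$. I would use the $k=2$ form (Shelah), so that $\mathrm{TP}$ is given by one fixed positive straight pattern on $\omega^{<\omega}$, reindexed by $\omega$. With that, the required form of $Q$ is available and the argument is otherwise routine.
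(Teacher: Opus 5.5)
Your proposal is correct and follows essentially the same route as the paper. The paper also splits nonlowness into positively straightly definable properties $P_i$, one for each strictly increasing sequence $(k_n)_{n<\omega}$. It then notes that, under the hypothesis, each $P_i$ implies the tree property among countably categorical theories, and applies Theorem~\ref{theorem on implications between positively straightly definable properties in countably categorical theories}. Your explicit pattern for $P_{\bar k}$ and your appeal to Shelah's $k=2$ form of $\mathrm{TP}$ (so that $Q$ is a single positive pattern) simply spell out details the paper leaves implicit.
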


Notice that countable categoricity is not even an extra assumption here, because the question already only asks if every \textit{countably categorical} simple theory is low; it is well-known that not every simple theory is low in general. (See \cite{CasanovasKim1998} for one example.)

We conclude this section by giving some additional applications of the technique used to prove Theorem \ref{theorem on implications between positively straightly definable properties in countably categorical theories}, though these no longer necessarily involve positively straightly definable properties.

\begin{remark}\label{preservation of sopn}

While the arguments of \cite{bodirsky2025taking} do not, as they claim, apply to all straightly definable properties, they do show (Proposition 3.24 of that paper) that $\mathrm{SOP}_{n}$ is preserved under model companions for integers $n \geq 3$. (Because it was previously not known that $\mathrm{SOP}_{n}$ is straightly definable for integers $n \geq 4$, these properties $\mathrm{SOP}_{n}$ are handled in their paper as a separate case, despite their claims about general straightly definable properties.) In fact, they show as in Fact \ref{preservation under model companions} that if $T$ has a model companion $T'$, and $R(x,y)$ is an existential formula exhibiting $\mathrm{SOP}_{n}$ in $T'$ (in the sense of Definition \ref{definition of sopn} in terms of $n$-cycles), then $R(x, y)$ exhibits $\mathrm{SOP}_{n}$ in $T$.

An open problem posed in \cite{chernikov2014theories} asks whether, for $n \geq 3$, every theory with $\mathrm{SOP}_{n}$ has either $\mathrm{SOP}_{n+1}$ or $\mathrm{TP}_{2}$.\footnote{As an edge case of this problem, it is also open whether every theory with the tree property has either $\mathrm{TP}_{2}$ or $\mathrm{SOP}_{3}$, and an analogous version of the conclusion of this remark applies to this question.} A theory is defined to have $\mathrm{TP}_{2}$ if there is a formula $\varphi(x, y)$ and $\{b_{ij}\}_{i, j < \omega}$ such that for all $i< \omega$ and distinct $j_{1}, j_{2}< \omega$, $\{\varphi(x, b_{ij_{1}}), \varphi(x, b_{ij_{2}})\}$ is inconsistent, but for $\sigma: \omega \to \omega$, $\{\varphi(x, b_{i\sigma(i)})\}_{i < \omega}$ is consistent. So $\mathrm{TP}_{2}$ is another example of a positively straightly definable property.

Since the results of \cite{bodirsky2025taking} on preservation under model companions apply to $\mathrm{SOP}_{n}$ for $n \geq 3$ just as well as to positively straightly definable properties, we may reason about this open problem just as in the proof of Theorem \ref{theorem on implications between positively straightly definable properties in countably categorical theories}. Specifically, suppose it is in fact the case that every theory with $\mathrm{SOP}_{n}$ has either $\mathrm{SOP}_{n+1}$ or $\mathrm{TP}_{2}$. Then if $T$ is countably categorical, and $\varphi(x, y)$ exhibits $\mathrm{SOP}_{n}$ in $T$, there is some $\exists\forall$-formula $\varphi'(x', y')$ in $\varphi(x, y)$ exhibiting either $\mathrm{SOP}_{n+1}$ or $\mathrm{TP}_{2}$ in $T$.

\end{remark}

We give one last unconditional application of the technique of Theorem \ref{theorem on implications between positively straightly definable properties in countably categorical theories}. This will involve the property of \textit{supersimplicity}, which is given by a pattern in multiple formulas, rather than just one.\footnote{While we have been almost exclusively focused on local properties, where some pattern is defined by a single formula, supersimplicity is just one example of a model-theoretic property which is not local, but still has a patterning ``flavor". Other examples include various notions of rank in first-order theories. It would be interesting to see future work which attempts to characterize all local and multi-formula properties alike using some definitional scheme.} A theory is \textit{supersimple} if there is no family of formulas $\{\varphi^{i}(x, y^{i})\}_{i < \omega}$ with a family of positive integers $\{k_{i}\}_{i < \omega}$ and parameters $\{b_{\eta}\}_{\eta \in \omega^{< \omega}}$ such that, for $\sigma \in \omega^{\omega}$, $\{\varphi^{n}(x, b_{\sigma|_{n}})\}_{n < \omega}$ is consistent, but for $\eta \in \omega^{< \omega}$, any $k_{|\eta|}$ many distinct formulas in $\{\varphi^{|\eta|+1}(x, b_{\eta \smallfrown \langle i \rangle})\}_{i < \omega}$ form an inconsistent set.

One of the main cases of the Koponen conjecture from \cite{Kop16}, resolved in Theorem 6.2 of \cite{baldwin2024simple}, is that every simple theory with quantifier elimination in a finite relational language is supersimple. We can now show that this result holds at the level of $\exists\forall$-formulas. (This proof, again, uses the techniques of the main result this section. However, we would not be surprised if it was also possible to prove this corollary using an extremely tedious analysis of the proof of this simple-to-supersimple case of the Koponen conjecture.)

\begin{corollary}
    Let $T$ be theory with quantifier elimination in a finite relational language. Suppose $T$ is not supersimple, exhibited by the formulas $\varphi^{i}(x, y^{i})$. Then $T$ is non-simple, exhbited by some $\exists\forall$-formula in the $\varphi^{i}(x, y^{i})$\footnote{A $\exists\forall$-formula in a family of formulas is defined similarly to a $\exists\forall$-formula in a single formula, just starting with a Boolean combination of instances of formulas in this family instead of a Boolean combination of instances of the single formula}.

\end{corollary}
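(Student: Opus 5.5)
Since the Koponen-conjecture theorem of \cite{baldwin2024simple} gives: simple and QE in a finite relational language implies supersimple, I will run the argument of Theorem \ref{theorem on implications between positively straightly definable properties in countably categorical theories} with the family $\{\varphi^{i}\}$ in place of a single $\varphi$. The plan is to pass to a reduct that is again a theory with quantifier elimination in a finite relational language, still witnesses non-supersimplicity, and whose model companion inherits that failure.

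\textbf{Setting up the reduct.} Since the language $L$ of $T$ is finite relational and $T$ has QE, $T$ is countably categorical. Moreover each $\varphi^{i}$ is equivalent to a quantifier-free formula, hence to a Boolean combination of atomic $L$-formulas. So only finitely many atomic formulas are relevant to each $\varphi^{i}$, but infinitely many across the family. Introduce predicates $R_{i}(x,y^{i})$ for $\varphi^{i}$, together with inconsistency predicates $I^{i}_{k}(y_{1},\ldots,y_{k})$ for $\neg\exists x\bigwedge_{j}\varphi^{i}(x,y_{j})$. Let $T_{0}$ be the reduct of the definitional expansion to these predicates. Then $T_{0}$ is countably categorical, being a reduct of a countably categorical theory. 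By Fact \ref{countably categorical theories have model companions}, $T_{0}$ has a countably categorical model companion $T'$.

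\textbf{Transferring non-supersimplicity to $T'$.} Exactly as in the Claim of the previous proof, using the two embedding conditions for model companions, the array $\{b_{\eta}\}$ together with realizations of the branches embeds into a model of $T'$. There the $R_{i}$ still exhibit the supersimplicity-failure pattern. The consistency conditions are witnessed by the embedded realizations. The inconsistency conditions hold because any witness in a model of $T'$ would embed into a model of $T_{0}$, where it is ruled out by the definition of the $I^{i}_{k}$. So $T'$ is not supersimple.

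\textbf{Applying Koponen.} The crux is now to use \cite{baldwin2024simple} to conclude that $T'$ is non-simple. This requires $T'$ to have QE in a finite relational language, which is the main obstacle, since $T_{0}$ as built has an infinite language.

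My first approach is to change the reduct: take $T_{0}$ to be the reduct of $T$ to the finitely many atomic $L$-relations themselves. Each $\varphi^{i}$ is quantifier-free in these relations, so non-supersimplicity is already exhibited in $T$ by formulas in this finite language. The model companion of such a theory is an $\aleph_{0}$-categorical model-complete theory. I would then argue that it has QE after naming finitely many $\emptyset$-definable relations. Alternatively, I would find a version of Koponen's theorem for model companions of such reducts, or appeal directly to the fact that $\aleph_{0}$-categorical model-complete theories in a finite relational language that are homogeneous in that language satisfy the hypotheses.

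If this fails, I would simply take $T_{0}=T$ itself, with the $\varphi^{i}$ replaced by their quantifier-free equivalents. That gives non-simplicity of $T$ directly, but without the $\exists\forall$ control, so the control must come from the model companion step.

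\textbf{Pulling back.} Once $T'$ is non-simple, the tree property is positively straightly definable. Since $T'$ is model complete, some existential formula in the predicates exhibits TP in $T'$. By Fact \ref{preservation under model companions}, that formula also exhibits TP in $T_{0}$, and hence in $T$. Unfolding the predicates $R_{i}$ and $I^{i}_{k}$, which are universal in the $\varphi^{i}$, yields an $\exists\forall$-formula in the $\varphi^{i}$, as in the final paragraph of the preceding proof.
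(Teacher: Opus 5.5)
Your reduct $T_{0}$, the transfer of non-supersimplicity to $T'$, and the pull-back through Fact~\ref{preservation under model companions} all match the paper's sketch. The gap is the step you yourself flag as the crux: you never establish that $T'$ satisfies the hypothesis of the Koponen case, and none of your proposed fixes does it. Reducting to the atomic $L$-relations defeats the purpose. That reduct is just $T$ restricted to a sublanguage of the finite language $L$, so any TP witness obtained there is existential in $L$-atoms. It is not an $\exists\forall$-formula built from instances of the $\varphi^{i}$, and since $T$ already has quantifier elimination, such ``control'' would be vacuous anyway. Your claim that an $\aleph_{0}$-categorical model complete theory in a finite relational language acquires quantifier elimination after naming finitely many $\emptyset$-definable relations is unjustified. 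That property is finite homogenizability, which fails in general. Whether quantifier elimination in a finite relational language survives reducts or model companions is open (Question 1.1 of \cite{bodirsky2025taking}). Your remaining options are placeholders rather than arguments, and the fallback $T_{0}=T$ gives no control, as you note.

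The missing idea is to stop requiring $T'$ to satisfy the hypothesis of the Koponen case. Use instead a weaker hypothesis that survives your construction: in each fixed finite tuple of variables, there are only finitely many types over $\emptyset$ of indiscernible sequences.
\begin{itemize}
\item $T$ has this property: if $r$ bounds the arities, the type of an indiscernible sequence is determined by the quantifier-free type of its first $\max(r,2)$ terms.
\item It passes to reducts: from a reduct-indiscernible sequence, extract a fully indiscernible one; its reduct type is unchanged.
\item It passes to model companions. Embed a model $N$ of $T'$ into a model $M$ of $T_{0}$, which embeds in turn into a model of $T'$ elementarily extending $N$. So a $T'$-indiscernible sequence has the same existential type in $N$ and in $M$. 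That existential type is preserved when you extract a $T_{0}$-indiscernible sequence, and it determines the full $T'$-type by model completeness.
\end{itemize}
So your $T'$ has this property, despite its infinite language $\{R_{i}, I^{i}_{k}\}$. Theorem 6.3 of \cite{baldwin2024simple} says a countably categorical theory with this property is either supersimple or non-simple. It applies directly to $T'$ and gives non-simplicity, after which your pull-back goes through unchanged.
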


\begin{proof}

(sketch) Let $T_{0}$ be a reduct of $T$.  Note that $T$ is countably categorical. The theory  $T_{0}$ is then countably categorical, so by Fact \ref{countably categorical theories have model companions} it has a countably categorical model companion $T'$. It is not known whether the property of having quantifier elimination in a finite relational language is preserved under model companions (see Question 1.1 of \cite{bodirsky2025taking}), and this property is also not known to be preserved under reducts. However, $T$ at least has the weaker property that there are finitely many types of indiscernible sequences (over $\emptyset$) in any fixed finite variables. One can show that this is preserved under taking model companions and reducts, so $T'$ has this property as well. But it is even shown in Theorem 6.3 of \cite{baldwin2024simple} that, for any countably categorical theory with finitely many types of indiscernible sequences in any fixed finite variables, that theory must be either supersimple or non-simple. So $T'$, if we can show that it is not supersimple, will not be simple. We can then proceed as in the proof of Theorem \ref{theorem on implications between positively straightly definable properties in countably categorical theories}.

\end{proof}

\section{Open Questions}\label{open questions}

Prior to the results of this paper, it was known that all of the original properties from Shelah's classical classification-theoretic hierarchy were straightly definable, with the exception of $\mathrm{SOP}_{n}$ for integers $n \geq 4$. Our result, Theorem \ref{SOPn straightly definable}, that these properties are also straightly definable therefore completes the categorization.

However, as shown in the table at the end of Section \ref{New results on SOPn}, there remains one gap in the categorization of the original classification-theoretic properties. This gap was first noted in the following question of \cite{garcia2022model}:

\begin{question}
    Is the tree property, $\mathrm{TP}$, poset definable? 
\end{question}

It should also be noted that the fact that $\mathrm{SOP}_{n}$ is straightly definable and poset definable for all $n$ only applies to \textit{integer} values of $n$. The case of non-integer values of $n$, for which the properties $\mathrm{SOP}_{n}$ were introduced in \cite{mutchnik2025approximations}\footnote{Another family of properties defined in terms of quasimetrics, referred to as an extension of $\mathrm{SOP}_{r}$ to real values of $r$, was defined in a thesis appendix of \cite{hanson2020definability}; however, as Hanson shows, these coincide  for relatively straightforward reasons with the original properties $\mathrm{SOP}_{n}$ for $n$ an integer, so unlike the real-valued $\mathrm{SOP}_{r}$ hierarchy introduced by the second author of this paper, it's not plausible that these could actually be new properties.}, remains open: 

\begin{question}
    Are the properties $\mathrm{SOP}_{r}$ straightly definable for non-integer values of $r$? Are they poset definable?
\end{question}

To our knowledge, the proof that $\mathrm{SOP}_{n}$ is straightly definable and poset definable for integer values of $n$ does not straightforwardly extend to this case.

In fact, it remains open whether the properties $\mathrm{SOP}_{r}$, for non-integer values of $r$, are even new properties at all. We could make the following bold conjecture: $\mathrm{SOP}_{n}$ is straightly definable (or poset definable) \textit{if and only if} $n$ is an integer. We have now shown the ``if" direction; the ``only if" direction would therefore imply that the properties $\mathrm{SOP}_{r}$ for non-integer values of $r$ really are new classification-theoretic properties. Moreover, proving the ``only if" direction would reshape our understanding of the nature of the newness problem for non-integral $\mathrm{SOP}_{r}$. A reasonable intuition may suggest that, if it's the case that the properties $\mathrm{SOP}_{r}$ for non-integer values of $r$ are new properties, this would demonstrate that the integers do not have a special status within the strict order property hierarchy. However, showing that only the properties in the integer-valued $\mathrm{SOP}_{n}$ hierarchy are straightly definable would suggest, contrary to this intuition, that the real-valued $\mathrm{SOP}_{r}$ hierarchy does introduce new properties \textit{precisely because} the integers have a special status.

\textbf{Acknowlegdements:} We would like to thank Mirna Džamonja, Rosario Mennuni and Erik Walsberg for helpful comments on earlier versions of this paper. The second author would like to thank Alex Kruckman for first drawing his attention to the problem of straight definability of $\mathrm{SOP}_n$ for integers $n \geq 4$. 

\appendix

\section{Shelah's ``straight definition" of $\mathrm{SOP}_{n}$}\label{shelah ``straight definition"}

 Shelah, aiming to give a straight definition of $\mathrm{SOP}_{n}$ for integers $n \geq 3$, claims that a theory has $\mathrm{SOP}_{n}$ if and only if it satisfies the following:\footnote{While he actually states that $\mathrm{NSOP}_{n}$ is equivalent to this property, this is most likely just a typographical error.}

    There is a formula $\varphi(x, y)$ and an indiscernible sequence $\{a_{i}\}_{i < \omega}$ such that, for each $0=i_{0} < \ldots < i_{n} = \omega$, and $\varphi(x, y)^{\mathrm{if}(\mathrm{\ell \: even})}$ defined to be $\varphi(x, y)$ if $\ell$ is even and $\neg\varphi(x, y)$ otherwise, $\{\varphi(x, a_m)^{\text{if}(\ell\text{ even})} : \ell < n, m \in [i_\ell,i_{\ell+1})\}$ is consistent, but if $0=i_{0} < \ldots < i_{n} < \omega$, $\{\varphi(x, a_{i_\ell})^{\text{if}(\ell\text{ even})} : \ell \leq n)\}$ is inconsistent.

    By extracting indiscernible sequences, this is a straightly definable property. However, we show that this proposed straight definition for $\mathrm{SOP}_{n}$ fails. We can change the index set, replacing $\{a_{i}\}_{i < \omega}$ with $\{a_{i}\}_{i \in \mathbb{Q}_{\geq 0}}$. Thus, for $n \geq 3$, the proposed straight definition of $\mathrm{SOP}_{n}$ will fail if there is a theory with $\mathrm{SOP}_{n}$ with the following property, which contradicts the proposed straight definition of $\mathrm{SOP}_{n}$:

    Let $\{a_{i}\}_{i < \omega}$ be an indiscernible sequence with $b =: a_{0} \cap a_{1} = \bigcap a_{i}$, and let $\varphi(x, y)$ be a formula and $p(x) \in S(b)$ a complete type such that both $p(x) \cup \{\varphi(x, a_{i})\}_{ i < \omega}$ is consistent, and $p(x) \cup \{\neg\varphi(x, a_{i})\}_{ i < \omega}$ is also consistent. (Note that this follows from the consistency condition with $\{a_{i}\}_{i \in \mathbb{Q}_{\geq 0}}$ in Shelah's proposed straight definition, where $p(x)$ is the type over $b$ of a realization for this consistency condition.) Then $\{\varphi(x, a_{i})^{\text{if}(i \text{ even})} \}_{ i < \omega}$ is consistent. (This contradicts the inconsistency condition.)

    We first show that the model companion of the theory of triangle-free graphs is an $\mathrm{SOP}_{3}$ theory with this property contradicting Shelah's proposed straight definition. Let $\{a_{i}\}_{i < \omega}$ be an indiscernible sequence satisfying the hypotheses for $\varphi(x, y)$, with $b =: a_{0} \cap a_{1} = \bigcap a_{i}$ and $p(x) \in S(b)$. Either $\varphi(x, a_{0})$ or $\neg\varphi(x, a_{0})$ will be satisfied by some $c \models p(x)$ with $c \cap a_{0} \subset b$ and with no edges between $c \backslash b$ and $a_{0} \backslash b$. (This is just because there does exist some $c \models p(x)$ with $c \cap a_{0} \subset b$ and no edges between $c \backslash b$ and $a_{0} \backslash b$.) Suppose without loss of generality that $\neg\varphi(x, a_{0})$ is so satisfied. Choose $d \models p(x) \cup \{\varphi(x, a_{i})\}_{ i < \omega}$ over which $\{a_{i}\}_{i< \omega}$ is indiscernible; then $d \cap a_{i} = b$. By removing edges we may find $d'$ with $d'a_{i} \equiv da_{i}$ for $i$ even, and $d'a_{i} \equiv c a_{0}$ for $i $ odd. Then $d' \models \{\varphi(x, a_{i})^{\text{if}(i \text{ even})} \}_{ i < \omega} $, as desired.

    The verification for $\mathrm{SOP}_{n}$ for $n \geq 4$ uses the same idea. We sketch this argument, verifying the property contradicting Shelah's proposed straight definition for the model companion of the theory of $\leq n$-cycle-free directed graphs, which has $\mathrm{SOP}_{n}$. Again let $\{a_{i}\}_{i < \omega}$ be an indiscernible sequence satisfying the hypotheses for $\varphi(x, y)$, with $b =: a_{0} \cap a_{1} = \bigcap a_{i}$ and $p(x) \in S(b)$. Let $n= 2k$ or $n = 2k-1$ (so $k$ is the minimum such that any two points have a path of length at most $k$). Say $c \models p(x)$ is \textit{freely amalgamated} with $a_{i}$ over $b$ if $c \cap a_{i} \subset b$ and the minimum length of a path (in the ambient model) between a point of $c \backslash b$ and a point of $a_{i} \backslash b$ is the minimum length of a path \textit{each of whose edges are in $cb$ or $a_{i}b$} between those points, or $k$, whichever is lower. Additionally, for $k' \leq k$, say $c$ is $k'$-\textit{freely amalgamated} with $a_{i}$ over $b$ if $c \cap a_{i} \subset b$ and the minimum length of a path (in the ambient model) between a point of $c \backslash b$ and a point of $a_{i} \backslash b$ is the minimum length of a path \textit{each of whose edges are in $cb$ or $a_{i}b$} between those points, or $k'$, whichever is lower.

    Then either $\varphi(x, a_{0})$ or $\neg\varphi(x, a_{0})$ is satisfied by some $c \models p(x)$ freely amalgamated with $a_{0}$ over $b$. Without loss of generality we may assume that $\neg \varphi(x, a_{0})$ is so satisfied. Suppose for a contradiction that $\{\varphi(x, a_{i})^{\text{if}(i \text{ even})} \}_{ i < \omega}$ were inconsistent. Again choose $d \models p(x) \cup \{\varphi(x, a_{i})\}_{ i < \omega}$ over which $\{a_{i}\}_{i< \omega}$ is indiscernible; then, again $d \cap a_{i} = b$. We first show that $d$ can be chosen freely amalgamated with $a_{i}$ over $b$. We know that $d$ is at least $1$-freely amalgamated with $a_{i}$ over $b$, so by induction it sufficies to assume that $d$ is $k'$-freely amalgamated with $a_{i}$ over $b$ for $k' < k$, and then show that we can choose $d$ to be $k'+1$-freely amalgamated with $a_{i}$ over $b$. Starting with $d \models p(x) \cup \{\varphi(x, a_{i})\}_{ i < \omega}$ over which $\{a_{i}\}_{i< \omega}$ is indiscernible and with $d$ $k'$-freely amalgamated with $a_{i}$ over $b$, we can find $d'$ such that $d'a_{i} \equiv da_{i}$ for $i$ even, and $d'$ is $k'+1$-freely amalgamated with $a_{i}$ over $b$ for $i$ odd. By the supposition that $\{\varphi(x, a_{i})^{\text{if}(i \text{ even})} \}_{ i < \omega}$ is inconsistent and indiscernibility, among the odd $i$, $d' \models \varphi(x, a_{i})$ for all but finitely many $i$. So by indiscernibility and extracting an indiscernible sequence, we may find $d'' \models p(x) \cup \{\varphi(x, a_{i})\}_{ i < \omega}$ over which $\{a_{i}\}_{i< \omega}$ is indiscernible and with $d''$ $k'+1$-freely amalgamated with $a_{i}$ over $b$, as desired.

    So we now have $d \models p(x) \cup \{\varphi(x, a_{i})\}_{ i < \omega}$ over which $\{a_{i}\}_{i< \omega}$ is indiscernible, and with $d$ freely amalgamated with $a_{i}$ over $b$. At this point we may find $d'$ with $d'a_{i} \equiv da_{i}$ for $i$ even, and $d'a_{i} \equiv c a_{0}$ for $i $ odd. Then $d' \models \{\varphi(x, a_{i})^{\text{if}(i \text{ even})} \}_{ i < \omega} $, contradicting our supposition that this set of formulas was inconsistent.

\section{ $\mathrm{SOP}_{1}$, $ \mathrm{SOP}_{2}$ and formula complexity}\label{formula complexity sop1 sop2}

Let $\varphi(x, y)$ be a $\mathrm{SOP}_{1}$ formula, and $R_{n}(a_{1}, \ldots, a_{n}) =: (a_{1}, \ldots, a_{n}) \models \exists x \bigwedge_{i=1}^{n}\varphi(x, y_{i})$. Remark 3.15 of \cite{mutchnik2023properties} states a partial argument, which it acknowledges to be incomplete at a specific step, that the proof of \cite{Mutchnik2026nsop2} that every theory with $\mathrm{SOP}_{1}$ has $\mathrm{SOP}_{2}$ can in fact be analyzed to show that there must be some formula $\varphi'(x', y')$, consisting of a Boolean combination of the $R_{n}$, that exhibits $\mathrm{SOP}_{2}$. In this appendix, we state why the step acknowledged to be incomplete in that remark can be fixed, giving us our desired $\mathrm{SOP}_{2}$ formula $\varphi'(x', y')$.

    As stated in Remark 3.15 of \cite{mutchnik2023properties}, if $\varphi'(x, y)$ as desired does not exist, we can prove, \textit{in the quantifier-free context in the formulas $R_{n}$}, Kim’s
lemma for Morley sequences in a canonical coheir, symmetry for Conant-independence, and the weak
independence theorem for Conant-independence as in the original argument  that $\mathrm{SOP}_{1}$ is equal to $\mathrm{SOP}_{2}$. Having proven these, the part where the argument, as stated in that remark, could not be continued, was in inductively constructing a configuration $\{b^{1}_{i}b_{i}^{2}\}_{i < \omega}$ with the following properties for any fixed strong canonical coheir $q(x)$ over some model $M$:

(1) For $J_{n}$ the sequence beginning with $b_{i}^{2}$ for $i < n$ and then continuing with $b_{i}^{1}$ for $i \geq n$, $J_{n}$ is a strong canonical Morley sequence in $q(x)$ (so, by Kim’s
lemma for Morley sequences in a canonical coheir, is an $R_{j}$-clique for each $j < \omega$).

(2) For $i \leq j$, $b_{i}^{1} b_{j}^{2}\equiv^{\{R^{n}\} - \mathrm{qftp}}_{M}  b_{0}b_{1}$

(3) $b^{1}_{1} \ldots b^{1}_{n} \ldots \ind^{(K^{*})^{\{R_{n}\}-\mathrm{qf}}}_{M}b^{2}_{1} \ldots b^{2}_{n} \ldots$.

Here $\ind^{(K^{*})^{\{R^{n}\}-\mathrm{qf}}}$ is to be read as (coheir) Conant-independence as in the original argument, but \textit{in the quantifier-free sense with respect to the $R_{n}$}, while $\equiv^{\{R_{n}\} - \mathrm{qftp}}$ denotes satisfying the same quantifier-free type with respect to the $R_{n}$. The issue with continuing the argument stated in that remark is as follows: consider $\{b^{1}_{i}b_{i}^{2}\}_{i \leq n}$ already constructed, and attempt to add on $b^{1}_{n+1}$ and then $b^{2}_{n+1}$ as in the original argument. There is no problem in showing that adding on $b^{1}_{n+1} \models q(x)|_{M \{b^{1}_{i}b_{i}^{2}\}_{i \leq n} } $ preserves the truncations of the properties (1)-(3). However, the remark states that, because the weak independence theorem for Conant-independence only applies in the quantifier-free context in $R_{\infty}$, after adding on $b^{1}_{n+1}$ using that theorem as in the original argument, (1) is not preserved. Instead, we are left with only a configuration  $b^{1}_{n+1} \models q(x)|_{M \{b^{1}_{i}b_{i}^{2}\}_{i \leq n+1} } $ satisfying the truncations of the following properties:

(1$'$) For $J_{n}$ the sequence beginning with $b_{i}^{2}$ for $i < n$ and then continuing with $b_{i}^{1}$ for $i \geq n$, $J_{n}$ \textit{satisfies the same quantifier-free type over $M$ in the $R_{j}$} as a strong canonical Morley sequence in $q(x)$ (so, by Kim’s
lemma for Morley sequences in a canonical coheir, is an $R_{j}$-clique for each $j < \omega$).

(2$'$) Same as (2).

(3$'$) Same as (3).

However, if we can inductively build a configuration $\{b^{1}_{i}b_{i}^{2}\}_{i < \omega}$ satisfying (1$'$)-(3$'$), rather than (1)-(3), this would still complete the argument that the formula $\varphi'(x, y)$ as desired exists. So suppose $\{b^{1}_{i}b_{i}^{2}\}_{i \leq n}$ satisfying the truncations of (1$'$)-(3$'$) is already constructed; we need to show that adding on $b^{1}_{n+1} \models q(x)|_{M \{b^{1}_{i}b_{i}^{2}\}_{i \leq n} } $, and then $b^{2}_{n+1} $ using (the quantifier-free version in $R^{n}$ of) the weak independence theorem as in the original argument, the truncations of (1$'$)-(3$'$) are preserved. If we are able to show that adding on $b^{1}_{n+1}$ preserves these properties, then, on the basis of what we already know, adding on $b^{2}_{n+1} $ will preserve these properties: the issue with the weak independence theorem only applying at the quantifier-free level is that (1) is not preserved upon adding $b^{2}_{n+1} $, but there is no problem preserving (1$'$). So the issue is showing, starting with $\{b^{1}_{i}b_{i}^{2}\}_{i \leq n}$, that adding $b^{1}_{n+1} \models q(x)|_{M \{b^{1}_{i}b_{i}^{2}\}_{i \leq n} } $ preserves (1$'$)-(3$'$). And preservation of (2$'$) and (3$'$) by adding $b^{1}_{n+1} \models q(x)|_{M \{b^{1}_{i}b_{i}^{2}\}_{i \leq n} } $ will be as in the original argument.\footnote{More technically, it will be as in the original argument at the quantifier-free level in the $R_{n}$. Specifically, to prove the analogue of Claim 6.2 of \cite{Mutchnik2026nsop2} to show that (3) is preserved, we must assume that $\{b^{1}_{i}\}_{i \leq n}$ begins a canonical coheir Morley sequence over $M$ indiscernible over $M\{b^{2}_{i}\}_{i \leq n}$. But using $b^{1}_{1} \ldots b^{1}_{n}  \ind^{(K^{*})^{\{R_{n}\}-\mathrm{qf}}}_{M}b^{2}_{1} \ldots b^{2}_{n}$, we may replace $\{b^{1}_{i}b_{i}^{2}\}_{i \leq n}$ so that this is the case without changing the quantifier-free type over $M$ in the $R_{n}$.} So the part where a new observation is needed is showing that adding $b^{1}_{n+1} \models q(x)|_{M \{b^{1}_{i}b_{i}^{2}\}_{i \leq n} } $ preserves (1$'$).

We give this new observation. All we need is to prove the following claim:

\begin{claim}
    Let $p(x)$ be a global $M$-coheir, and let $\varphi(x, y)$ be a quantifier-free formula in the $R^{n}$. Suppose that (for $|b| = |y|$) $b$ and $b'$ satisfy the same quantifier-free type over $M$ in the $R^{n}$. Then $\varphi(x, b) \in p(x)$ if and only if $\varphi(x, b') \in p(x)$.

    Therefore, for $q(x)$ our strong canonical coheir, if $b_{0} \ldots b_{n}$ satisfies the same quantifier-free type over $M$ in the $R^{n}$ as the first $n+1$ terms of a strong canonical Morley sequence over $M$ in $q(x)$, for $b_{n+1} \models q(x)|_{Mb_{0} \ldots b_{n}}$,  $b_{0} \ldots b_{n+1}$ will satisfy the same quantifier-free type over $M$ in the $R^{n}$ as the first $n+2$ terms of a strong canonical Morley sequence over $M$ in $q(x)$.
\end{claim}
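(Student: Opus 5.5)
The first assertion can be proved directly from the coheir property; the second then follows by induction on $n$ together with the definition of Morley sequences.

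\emph{First assertion.} I reduce to atomic formulas. Membership in the complete type $p(x)$ commutes with Boolean combinations: $\neg\chi \in p$ iff $\chi\notin p$, and $\chi_1\wedge\chi_2\in p$ iff both are. So it suffices to treat $\varphi(x,y)$ atomic in the $R_n$, i.e.\ of the form $R_k(t_1,\dots,t_k)$ where each $t_i$ is a variable from $x$, from $y$, or a parameter from $M$.

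Write $\chi(x,b)$ for such an atomic instance. Suppose $\chi(x,b)\in p$ but $\chi(x,b')\notin p$. Then $\chi(x,b)\wedge\neg\chi(x,b')\in p$. Since $p$ is finitely satisfiable in $M$, there is $m\in M$ with $\models \chi(m,b)\wedge\neg\chi(m,b')$.

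Now $\chi(m,y)$ is a quantifier-free formula in the $R_n$ with parameters from $M$. Because $b$ and $b'$ have the same quantifier-free type over $M$ in the $R_n$, $\models\chi(m,b)$ iff $\models\chi(m,b')$. This is a contradiction. The same argument, starting from $\chi(x,b')\in p$ and $\chi(x,b)\notin p$, gives the other direction. No real difficulty arises here; the only point is that finite satisfiability lets us replace $x$ by a parameter from $M$, turning the $x$-dependence into a parameter.

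\emph{Second assertion.} Let $(c_i)$ be a strong canonical Morley sequence in $q$ over $M$, so $c_{n+1}\models q|_{Mc_0\ldots c_n}$. Suppose $b_0\ldots b_n$ has the same quantifier-free $R$-type over $M$ as $c_0\ldots c_n$, and $b_{n+1}\models q|_{Mb_0\ldots b_n}$. We must show that $b_0\ldots b_{n+1}$ and $c_0\ldots c_{n+1}$ have the same quantifier-free $R$-type over $M$.

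Take any quantifier-free formula $\theta(x,y_0,\dots,y_n)$ in the $R_n$ with parameters from $M$. We have
\[
\models\theta(b_{n+1},b_0,\dots,b_n)\iff \theta(x,\bar b)\in q \iff \theta(x,\bar c)\in q \iff \models\theta(c_{n+1},\bar c).
\]
The first equivalence holds because $b_{n+1}\models q|_{Mb_0\ldots b_n}$. The second is the first assertion applied to the tuples $\bar b$ and $\bar c$, using that $q$ is a global $M$-coheir (a strong canonical coheir is in particular one). The third holds because $c_{n+1}\models q|_{Mc_0\ldots c_n}$.

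The only potential subtlety is the order of variables in a Morley sequence. If the convention is $b_{n+1}\models q|_{M b_{\le n}}$, the computation above applies directly. If the convention runs in reverse, the same computation works after reindexing the variables.
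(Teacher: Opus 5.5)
Your proof is correct and follows the paper's argument: finite satisfiability of $p$ in $M$ gives some $m \in M$ with $\models \varphi(m,b)\wedge\neg\varphi(m,b')$, which contradicts $b$ and $b'$ having the same quantifier-free $R$-type over $M$. Your reduction to atomic formulas is harmless but unnecessary, since the argument works verbatim for any quantifier-free $\varphi$; your chain of equivalences for the ``therefore'' clause spells out the routine deduction the paper leaves implicit.
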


\begin{proof}

The statement of the main clause of the claim is just a refinement of the well-known fact that an $M$-coheir is an $M$-invariant type; while the conclusion should not be true for $M$-invariant types in general, the standard proof of the fact that \textit{coheirs} are invariant actually shows this stronger fact.  Specifically, suppose without loss of generality that $\varphi(x, b) \in p(x)$, $\neg\varphi(x, b') \in p(x)$ and let $a \models p(x)|_{Mbb'}$, so $\models \varphi(a, b) \wedge \neg\varphi(a,b')$. By finite satisfiability, there is $m \in M$ such that 
$\models \varphi(m, b) \wedge \neg\varphi(m,b')$. Because $\varphi(x, y)$ is assumed to be a quantifier-free formula in the $R^{n}$, this contradicts that $b$ and $b'$ satisfy the same quantifier-free type over $M$ in the $R^{n}$.

\end{proof}

So we have shown that there is a formula $\varphi'(x', y')$ exhibiting $\mathrm{SOP}_{2}$ which is a quantifier-free formula in the $R_{n}$. In particular $\varphi'(x', y')$ will be a Boolean combination of existential and universal formulas in $\varphi(x, y)$. 

\section{An additional remark on $\mathrm{PM}^{(2)}$}

As a supplement to the main results of this paper, we develop the theory of a particular model-theoretic patterning property specifically motivated by the general notion of a positively straightly definable property of theories. This property $\mathrm{PM}^{(2)}$  or \textit{$2$-positive maximality}, was defined in Definition 6.2 of \cite{bailetti2024walk}. It is related to a special kind of positive pattern, the \textit{maximal consistency patterns} mentioned earlier defined in Definition 3.1 of \cite{garcia2022model}. (Note that the word ``maximal" is used in two different senses between these definitions.)

\begin{definition}

(1) A \textit{maximal consistency pattern} is a positive pattern $(\mathcal{C}, \mathcal{I})$ such that:

\begin{itemize}
    \item each $(Z^{+}, \emptyset) \in \mathcal{I}$ has $|Z^{+}|=2$
    \item the sets $Y^{+} \subset \omega$ with $(Y^{+}, \emptyset) \in \mathcal{C}$ are exactly those sets maximal with the property of not containing any $Z^{+}$ with  $(Z^{+}, \emptyset) \in \mathcal{I}$
    \item every $i \in \omega$ belongs to at least two distinct pairs $Z^{+}$ with $(Z^{+}, \emptyset) \in \mathcal{I}$.\footnote{While this last condition may not matter much for our purposes, the reason why Garcia and Mennuni include this condition is to use it in their proof that every positively straightly definable property defined by a maximal consistency pattern is poset definable.}
\end{itemize}

(2) A formula is $2$-\textit{positively maximal} or has $\mathrm{PM}^{(2)}$ if it has every positively straightly definable property defined by a maximal consistency pattern. Equivalently, it has every positively straightly definable property definable by a positive pattern $(\mathcal{C}, \mathcal{I})$ such that each $(Z^{+}, \emptyset) \in \mathcal{I}$ has $|Z^{+}|=2$.\footnote{This is because, for every positively straightly definable property $P$ defined by a positive pattern $(\mathcal{C}, \mathcal{I})$ with $|Z^{+}|=2$ for each $(Z^{+}, \emptyset) \in \mathcal{I}$, $P$ is implied by some positively straightly definable property defined by a maximal consistency pattern.} A theory has $\mathrm{PM}^{(2)}$ if it has a formula with $\mathrm{PM}^{(2)}$; otherwise the theory is $\mathrm{NPM}^{(2)}$
    
\end{definition}

So this definition is analogous to the property of \textit{straight maximality} defined in Definition 5.20 of \cite{shelah_what_2000}, stating that there is a formula with every straightly definable property.  While Bailetti also defines $\mathrm{PM}^{(k)}$ for all $k \geq 2$, the proeprty $\mathrm{PM}^{(2)}$ appears special among these; for example, it is open whether $\mathrm{NPM}^{2} \cap \mathrm{NSOP} = \mathrm{NSOP}_{3}$.

By Fact \ref{positive straight definition of sop3}, $\mathrm{SOP}_{3}$ is positively straightly definable by a maximal consistency pattern.  Moreover, $\mathrm{TP}_{2}$ (defined in Remark \ref{preservation of sopn} above) is positively straightly definable by a maximal consistency pattern. So their negations, $\mathrm{NSOP}_{3}$ and $\mathrm{TP}_{2}$, imply $\mathrm{NPM}^{(2)}$. In \cite{mutchnik2023properties}, it is shown that $\mathrm{NSOP}_{3}$ theories exhbit some phenomena similar to those known or conjectured for $\mathrm{NTP}_{2}$ theories. We focus on one phenomenon in particular, the relationship between \textit{internally $\mathrm{NSOP}_{1}$} and \textit{co-$\mathrm{NSOP}_{1}$} types in these theories:

\begin{definition}
    (1) Let $p(x)$ be an $n$-type over $M$. Let $\mathcal{L}_{p}$ contain an $m$-ary relation symbol $R_{\varphi}$ for each formula $\varphi(x_{1}, \ldots, x_{m}) \in L(M)$ with $|x_{i}| = n$ for $i \leq m$. Let $\mathcal{M}_{p}$ be the $\mathcal{L}_{p}$-structure with domain $p(\mathbb{M}^{n})$ and with $R_{\varphi}(p(\mathbb{M}^{n})^{m}) = \varphi(\mathbb{M}^{mn}) \cap p(\mathbb{M}^{n})^{m}$. Then $p(x)$ is \textit{internally $\mathrm{NSOP}_{1}$} if the theory of $\mathcal{M}_{p}$ is $\mathrm{NSOP}_{1}$.

    (2) A type $p(x)$ over a model $M$ is \textit{co}-$\mathrm{NSOP}_{1}$ if there does not exist a formula $\varphi(x, y) \in L(M)$ and tuples $\{b_{\eta}\}_{\eta \in 2^{<\omega}}$, $b_{\eta} \subset p(\mathbb{M})$ such that $\{\varphi(x, b_{\sigma \upharpoonleft n})\}_{n < \omega}$ is consistent for any $\sigma \in 2^{\omega}$, but for any $\eta_{2} \unrhd \eta_{1} \smallfrown \langle 0\rangle$, $\{\varphi(x, b_{\eta_{2}}), \varphi(x, b_{\eta_{1} \smallfrown \langle 1\rangle})\}$ is inconsistent.
\end{definition}

The following, for $\mathrm{NSOP}_{3}$ theories, is Theorem 1.1 of \cite{mutchnik2023properties}; for $\mathrm{NTP}_{2}$ theories it is a straightforward generalization of Theorem 6.17 of \cite{chernikov2014theories}.

\begin{fact}
    Let $T$ be $\mathrm{NSOP}_{3}$ or $\mathrm{NTP}_{2}$. Then every internally $\mathrm{NSOP}_{1}$ type is co-$\mathrm{NSOP}_{1}$.
\end{fact}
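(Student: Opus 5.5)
This Fact is cited, not proved afresh, so the plan is to reconstruct its proof in the $\mathrm{NTP}_{2}$ case and to reduce the $\mathrm{NSOP}_{3}$ case to Theorem 1.1 of \cite{mutchnik2023properties}. I will argue by contraposition. Fix a type $p(x)$ over $M$ that is not co-$\mathrm{NSOP}_{1}$, witnessed by $\varphi(x,y)\in L(M)$ and a tree $\{b_{\eta}\}_{\eta\in 2^{<\omega}}$ with each $b_{\eta}\subset p(\mathbb{M})$. Assuming $p$ is internally $\mathrm{NSOP}_{1}$, the goal is to produce $\mathrm{TP}_{2}$ (respectively $\mathrm{SOP}_{3}$) in $T$.

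First, by Ramsey and compactness I will make the tree strongly indiscernible over $M$. The key point is that strong indiscernibility, and the relevant tree configurations, only involve the types of the $b_\eta$ over $M$. Since all coordinates of the $b_\eta$ lie in $p(\mathbb{M})$, these types are exactly what $\mathcal{M}_p$ records, namely traces of $L(M)$-formulas on realizations of $p$.

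Next, I will use that $\mathrm{Th}(\mathcal{M}_p)$ is $\mathrm{NSOP}_{1}$ to obtain a Kim-independence calculus on realizations of $p$. This includes Kim's lemma for the induced structure and an independence theorem over $M$, following \cite{chernikov2014theories}, Theorem 6.17, in the $\mathrm{NTP}_2$ case. Applying these to the tree, I will extract a sequence of pairs $(c_i,d_i)$ of tuples from $p(\mathbb{M})$ in which $\varphi(x,c_i)$ and $\varphi(x,d_j)$ become inconsistent along one arrangement of indices while positive chains remain consistent. The independence theorem is what amalgamates the consistency, since it lets the types of the left and right branches over $M$ be merged inside $p(\mathbb{M})$.

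From these pairs, in the $\mathrm{NTP}_2$ case I will build an array whose rows are $2$-inconsistent and whose paths are consistent, yielding $\mathrm{TP}_{2}$ for a formula derived from $\varphi$. In the $\mathrm{NSOP}_3$ case I will instead build the array of Fact \ref{positive straight definition of sop3}, yielding $\mathrm{SOP}_{3}$. The main obstacle is the transfer step. Independence in $\mathcal{M}_p$ controls only formulas evaluated on tuples from $p(\mathbb{M})$, but the consistency of $\varphi(x,\cdot)$ involves the external variable $x$. To handle this, I would first try to encode consistency of finite conjunctions $\exists x\bigwedge\varphi(x,y_i)$ as $L(M)$-relations on $p(\mathbb{M})$, which are therefore visible in $\mathcal{M}_p$. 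For the $\mathrm{NSOP}_3$ case I would rely on the result of \cite{mutchnik2023properties}.
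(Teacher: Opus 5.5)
Your outline points the same way as the argument the paper describes. The paper only cites this Fact, but it explains that the proof is a statement about hypergraph sequences definable in the $\mathrm{NSOP}_1$ theory $\mathrm{Th}(\mathcal{M}_p)$, and it carries out the analogous argument for $\mathrm{MPM}^{(2)}$. Deferring the $\mathrm{NSOP}_3$ case to \cite{mutchnik2023properties} is therefore fine. However, the step that carries the proof is left unspecified, and the route you describe for $\mathrm{NTP}_2$ would fail.

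First, the ``transfer step'' is not an obstacle; it should be your starting point rather than a fallback. Let $R_n(y_1,\dots,y_n)$ be the $L(M)$-formula $\exists x\bigwedge_{i\le n}\varphi(x,y_i)$ on tuples from $p(\mathbb{M})$. Then:
\begin{itemize}
\item the sequence $R_\infty$ is definable in $\mathcal{M}_p$;
\item the failure of co-$\mathrm{NSOP}_1$ is exactly $\mathrm{MSOP}_1$ for $R_\infty$;
\item $\mathrm{MTP}_2$ (resp.\ $\mathrm{MSOP}_3$) for $R_\infty$ yields $\mathrm{TP}_2$ (resp.\ the $\mathrm{SOP}_3$ pattern of Fact \ref{positive straight definition of sop3}) for $\varphi$ itself.
\end{itemize}
Since these are positive patterns, it suffices to find them in a monster model of $\mathrm{Th}(\mathcal{M}_p)$ and pull finite pieces back by compactness.

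Second, the Kim-independence calculus must be used over a model $N$ of $\mathrm{Th}(\mathcal{M}_p)$, not over $M$. In $\mathcal{L}_p$ the parameters of $M$ are built into the language, so ``over $M$'' means over $\emptyset$, where Kim's lemma and the independence theorem are not available in general.

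The missing idea is what Kim's lemma actually provides. From the $\mathrm{MSOP}_1$ tree one gets a model $N$ and global $N$-invariant types $p',q'$ with $p'|_N=q'|_N$, such that Morley sequences in $p'$ are $R_n$-cliques for every $n$ and Morley sequences in $q'$ are $2$-anticliques.

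With this in hand, building the $\mathrm{TP}_2$ array ``from these pairs'' is the wrong move. As you describe them, the pairs $(c_i,d_i)$ carry only an $\mathrm{SOP}_3$-shaped pattern of consistency and inconsistency. That pattern alone cannot produce $\mathrm{TP}_2$: the dense linear order has $\mathrm{SOP}_3$ but is $\mathrm{NIP}$, hence $\mathrm{NTP}_2$.

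Instead, build the array directly from $p',q'$:
\begin{itemize}
\item Start with any Morley sequence $I_1$ in $q'$ over $N$.
\item Given rows $I_1,\dots,I_k$, take $c\models p'|_{NI_{\le k}}$. Then $c\ind^{K}_{N}I_{\le k}$ by invariance, and $I_{\le k}\ind^{K}_{N}c$ by symmetry.
\item Since $c\models q'|_N$, the chain condition gives a Morley sequence $I_{k+1}$ in $q'$ over $N$ beginning with $c$ and indiscernible over $NI_{\le k}$. Every term of $I_{k+1}$ then realizes $p'|_{NI_{\le k}}$.
\end{itemize}
The rows are $2$-inconsistent, and every path, read in row order, is a Morley sequence in $p'$, hence consistent.

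The independence theorem becomes necessary only when a new vertex must be $p'$-related to some earlier vertices and $q'$-related to others, as in $\mathrm{MSOP}_3$. This is exactly the amalgamation in the paper's claim that $\mathrm{MSOP}_1$ implies $\mathrm{MPM}^{(2)}$ in $\mathrm{NSOP}_1$ theories, which gives $\mathrm{MTP}_2$ and $\mathrm{MSOP}_3$ at once.
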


Generalizing this fact, the goal of the rest of this section will be to sketch arguments generalizing this fact to apply not just to $\mathrm{NSOP}_{3}$ or $\mathrm{NTP}_{2}$ theories, but to all $\mathrm{NPM}^{(2)}$ theories:

\begin{proposition}\label{internally nsop1 types co-nsop1 in nmp2 theory}
    Let $T$ be $\mathrm{NPM}^{(2)}$. Then every internally $\mathrm{NSOP}_{1}$ type is co-$\mathrm{NSOP}_{1}$.
\end{proposition}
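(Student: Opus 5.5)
We argue by contraposition. Suppose $p(x)$ is a type over $M$ which is internally $\mathrm{NSOP}_{1}$ but not co-$\mathrm{NSOP}_{1}$. The goal is to produce, in $T$, a single formula exhibiting every positive pattern $(\mathcal{C}, \mathcal{I})$ whose inconsistency conditions all have $|Z^{+}|=2$. By the equivalent form of $\mathrm{PM}^{(2)}$ in the definition above, this is enough.

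The candidate formula is the $\mathrm{SOP}_{1}$-witness $\varphi(x, y) \in L(M)$ from the failure of co-$\mathrm{NSOP}_{1}$, whose parameters $b_{\eta}$ lie in $p(\mathbb{M})$. After naming $M$ (harmless, since the properties are local), I would use the following dichotomy, which I expect to be the heart of the proofs of the $\mathrm{NSOP}_{3}$ and $\mathrm{NTP}_{2}$ cases. Because $\mathcal{M}_{p}$ is $\mathrm{NSOP}_{1}$, Kim-independence in $\mathcal{M}_{p}$ (over a suitable model of $\mathrm{Th}(\mathcal{M}_{p})$ containing enough of $p$) satisfies the independence theorem. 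Meanwhile, the $\mathrm{SOP}_{1}$ tree for $\varphi$ with leaves in $p(\mathbb{M})$ yields, by the standard argument (as in \cite{chernikov2014theories}, Theorem 6.17, and \cite{mutchnik2023properties}), an indiscernible configuration. The configuration consists of sequences $(c_{i})$, $(c'_{i})$ in $p(\mathbb{M})$ such that:
\begin{itemize}
\item each pair $c_{i}, c'_{i}$ has the same type in $\mathcal{M}_{p}$ as a pair $d, d'$;
\item $\{\varphi(x,c_{i})\}$ and $\{\varphi(x,c'_{i})\}$ are each consistent;
\item $\varphi(x,c_{i}) \wedge \varphi(x, c'_{j})$ is inconsistent for $i<j$ (or for the relevant comparability relation).
\end{itemize}
Here the inconsistency is witnessed by a formula on the parameter side only. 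I therefore plan to track inconsistency of pairs as the $L(M)$-relation $I(y_{1},y_{2}) := \neg\exists x(\varphi(x,y_{1})\wedge\varphi(x,y_{2}))$ restricted to $p$, which is an $\mathcal{L}_{p}$-relation $R_{I}$.

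Next, let $G$ be an arbitrary graph on $\omega$; it encodes the inconsistency conditions of a pattern with size-$2$ inconsistency sets, and the consistency conditions are the $G$-independent sets. I want $(e_{i})_{i<\omega}$ in $p(\mathbb{M})$ such that $R_{I}(e_{i},e_{j})$ holds exactly when $\{i,j\}\in G$. I also want every finite $G$-independent set $A$ to satisfy $\models \exists x \bigwedge_{i\in A}\varphi(x,e_{i})$. That last condition is again an $\mathcal{L}_{p}$-relation, namely $R_{\exists x\bigwedge\varphi}$. The construction goes inductively in $\mathcal{M}_{p}$. Each new $e_{n}$ should realize, over $e_{<n}$, the amalgam of the ``$d$-type'' towards the earlier $e_{i}$ that are $G$-adjacent and the ``$d'$-type'' towards those that are not. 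The independence theorem for Kim-independence in $\mathrm{Th}(\mathcal{M}_{p})$ is what lets these partial types be amalgamated consistently, keeping each new element Kim-independent from the previous ones.

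Finally, compactness turns the finite stages into an infinite witness, so $\varphi$ exhibits every such pattern and $T$ has $\mathrm{PM}^{(2)}$, a contradiction.

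The main obstacle is the amalgamation step. The consistency requirement $\exists x\bigwedge_{i\in A}\varphi(x,e_{i})$ is not a property of pairs, so it is not automatically preserved by amalgamating pairwise types. In the $\mathrm{NSOP}_{3}$ proof this is handled by a Kim's-lemma style argument along Morley sequences, which shows that consistency along independent configurations is determined by the $\mathcal{M}_{p}$-type. I would try to isolate exactly that lemma, namely that for Kim-independent tuples in $\mathcal{M}_{p}$ the $\varphi$-consistency of the union follows from the pairwise data. I would then check that its proof uses only $\mathrm{NSOP}_{1}$ of $\mathcal{M}_{p}$ and none of the ambient $\mathrm{NSOP}_{3}$ hypothesis. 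Any residual ambient-theory input should then be replaceable by the single assumption $\mathrm{NPM}^{(2)}$.
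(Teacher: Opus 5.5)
There is a genuine gap, and it is exactly the step you flag. For each new $e_n$ you amalgamate a ``$d$-type'' or ``$d'$-type'' towards each earlier $e_i$ separately. That controls only the binary relation $R_I$. It gives no handle on $\exists x\bigwedge_{i\in A}\varphi(x,e_i)$ for larger $G$-independent $A$.

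The lemma you hope to isolate is false in general, already in simple theories. That lemma says that for Kim-independent tuples in $\mathcal{M}_p$, consistency of the union is determined by pairwise data. Counterexample: in the random $3$-hypergraph $(V,H)$, let $\varphi(uvw;y)$ be $(y=u\vee y=v\vee y=w)\wedge(H(u,v,w)\vee u=w)$. Then any two distinct parameters are consistent, and three distinct ones are consistent iff they form a hyperedge. Independent triples with identical pairwise types over the base occur both ways. For the same reason, the pair types $\mathrm{tp}(d,d')$ you extract from the tree are the wrong input: they carry no higher-arity information.

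The missing idea is to let a global invariant type carry all the consistency at once. This is what the paper does.
\begin{itemize}
\item Work in the $\mathrm{NSOP}_1$ theory $\mathrm{Th}(\mathcal{M}_p)$ with the hypergraph sequence $R_n(y_1,\dots,y_n):=\exists x\bigwedge_{i\le n}\varphi(x,y_i)$. This sequence has $\mathrm{MSOP}_1$ because $p$ is not co-$\mathrm{NSOP}_1$.
\item The Kim's-lemma form of $\mathrm{MSOP}_1$ gives a model $N$ and global $N$-invariant types $r,s$ with $r|_N=s|_N$. Every $r$-Morley sequence is an $R_n$-clique for all $n$, and every $s$-Morley sequence is an $R_2$-anticlique.
\item Take a finite graph on $v_1,\dots,v_m$, with edges meaning consistency. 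Build mutually indiscernible $N$-invariant Morley sequences $I_1,\dots,I_m$. Choose each term of $I_{k+1}$ to realize $r|_{N\{I_i\}_{i\in S}}$, where $S$ is the set of \emph{all} earlier neighbours of $v_{k+1}$ taken together. It should also realize $s|_{NI_j}$ for each earlier non-neighbour $j$.
\item Then every clique $i_1<\dots<i_t$, with $a_i\in I_i$, begins an $r$-Morley sequence and so satisfies $R_t$. Non-adjacent pairs fail $R_2$. Compactness then yields $\mathrm{PM}^{(2)}$ for $\varphi$.
\end{itemize}

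The amalgamation needs three ingredients. First, the chain condition and symmetry: mutually indiscernible invariant Morley sequences satisfy $I_i\ind^K_N I_{\neq i}$, so $\{I_i\}_{i\in S},I_{j_1},\dots,I_{j_\ell}$ is a Kim-independent sequence of blocks. Second, repeated applications of the independence theorem. Third, extraction of mutual indiscernibility. Working with sequences rather than single elements is what supplies the Kim-independence between blocks. Your single-element induction assumes that independence without justification.

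Finally, no ambient hypothesis is needed along the way. $\mathrm{NPM}^{(2)}$ of $T$ is used only for the closing contradiction, just as $\mathrm{NSOP}_3$ or $\mathrm{NTP}_2$ is in the earlier results.
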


This proposition suggests that the fact that both $\mathrm{SOP}_{3}$ and $\mathrm{TP}_{2}$ are positively straightly definable properties defined by maximal consistency patterns, so $\mathrm{NSOP}_{3}$ and $\mathrm{NTP}_{2}$ imply $\mathrm{NPM}^{(2)}$, is the underlying explanation for why $\mathrm{NSOP}_{3}$ and $\mathrm{NTP}_{2}$ both exhibit this phenomenon. Common phenomena within $\mathrm{NSOP}_{3}$ and $\mathrm{NTP}_{2}$ that can be explained by $\mathrm{NPM}^{(2)}$ are not limited to this one; in an upcoming preprint of the second author, we will discuss a common phenomenon related to cycle removal that, as we will remark, actually applies to all $\mathrm{NPM}^{(2)}$ theories.

We will mostly refer to the arguments for Theorem 1.1 from \cite{mutchnik2023properties}, but will prove a specific combinatorial claim, analogous to Proposition 3.12 from that paper. We briefly discuss how that combinatorial claim fits into those arguments. Consider sequences of hypergraphs $R_{\infty}=(V,\{R_{n}\}_{n < \omega})$ on a common set of vertices $V$, where $R_{n}$ is an $n$-ary edge relation consisting of $n$-tuples of distinct elements which is permutation invariant, and each edge of $R_{n}$ is an $R_{m}$-clique for $m \leq n$. Every positively straightly definable property has an analogue for properties of hypergraph sequences, where consistency of $n$ instances of a formula is replaced by the presence of an $R_{n}$-edge between $n$ vertices, and inconsistency of $n$ instances of a formula is replaced by the absence of an $R_{n}$-edge between $n$ vertices. So $\mathrm{SOP}_{1}$ has a hypergraph analogue $\mathrm{MSOP}_{1}$ (where the ``$\mathrm{M}$" is after \cite{Mal10}), $\mathrm{TP}_{2}$ has $\mathrm{MTP}_{2}$ (there called an \textit{$(\omega, \omega, 1)$-array}, as in Malliaris's terminology), $\mathrm{SOP}_{3}$ has $\mathrm{MSOP}_{3}$ (there called the \textit{compatible order property} as in Malliaris), and so on. The second author's proof that internally simple types are co-simple in $\mathrm{NSOP}_{3}$ theories reduces to showing that, for $R_{\infty}=(V,\{R_{n}\}_{n < \omega})$ any definable hypergraph sequence in an $\mathrm{NSOP}_{1}$ theory (particularly, in $\mathcal{M}_{p}$ in the internally $\mathrm{NSOP}_{1}$ type $p$), if $R_{\infty}$ has $\mathrm{MSOP}_{1}$, $R_{\infty}$ has $\mathrm{MSOP}_{3}$. The proof for $\mathrm{NTP}_{2}$ involves doing the same, but with $\mathrm{MTP}_{2}$ instead of $\mathrm{MSOP}_{3}$. Thus, to prove Proposition \ref{internally nsop1 types co-nsop1 in nmp2 theory}, it will suffice to define $\mathrm{MPM}^{(2)}$ and show that $\mathrm{MSOP}_{1}$ implies $\mathrm{MPM}^{(2)}$ in hypergraph sequences definable in $\mathrm{NSOP}_{1}$ theories.

We first restate the definition of $\mathrm{MSOP}_{1}$, then define $\mathrm{MPM}^{(2)}$:

\begin{definition}
        Let $R_{\infty}=(V,\{R_{n}\}_{n < \omega})$ be a sequence of hypergraphs on a common set of vertices $V$, where $R_{n}$ is an $n$-ary edge relation consisting of $n$-tuples of distinct elements which is permutation invariant, and each edge of $R_{n}$ is an $R_{m}$-clique for $m \leq n$. Then $R_{\infty}$ has

        (1)  $\mathrm{MSOP}_{1}$ if (in some model of the theory of $R_{\infty}=(V,\{R_{n}\}_{n < \omega})$) there are vertices $\{b_{\eta}\}_{\eta \in 2^{<\omega}}$ such that $\{ b_{\sigma \upharpoonleft n}\}_{n < \omega}$ is a clique for each of the $R_{n}$ for any $\sigma \in 2^{\omega}$, but for any $\eta_{2} \unrhd \eta_{1} \smallfrown \langle 0\rangle$, $\{b_{\eta_{2}},  b_{\eta_{1} \smallfrown \langle 1\rangle}\}$ is a $2$-anticlique.

        (2) $\mathrm{MPM}^{(2)}$ if, for $R'_{\infty} =: \{R'_{n}\}_{n < \omega}$ a hypergraph sequence on some vertices $V'$ such that $(V', R'_{2})$ is the random graph, and the sets of $n$ vertices that are $R'_{n}$-edges are exactly the $R_{2}$-cliques, some model of the theory of $R_{\infty}=(V,\{R_{n}\}_{n < \omega})$ embeds $R'_{\infty} $ as a sequence of induced subhypergraphs.
\end{definition}

To prove Proposition \ref{internally nsop1 types co-nsop1 in nmp2 theory}, it will be enough to prove the following claim:

\begin{claim}
    Let $T$ be $\mathrm{NSOP}_{1}$, and let $R_{\infty}=(V,\{R_{n}\}_{n < \omega})$ be a sequence of hypergraphs on $V$ with $V$, $R_{n}$ definable in $T$. Then if $R_{\infty}$ has $\mathrm{MSOP}_{1}$, $R_{\infty}$ has $\mathrm{MPM}^{(2)}$.
\end{claim}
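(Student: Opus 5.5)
The plan is to adapt the proof of Proposition 3.12 of \cite{mutchnik2023properties}, where $\mathrm{MSOP}_{1}$ is shown to imply $\mathrm{MSOP}_{3}$. Here the target is the full ``generic'' configuration $R'_{\infty}$. By compactness it suffices to embed every finite graph $G$, with $R_{n}$-edges exactly on the $n$-cliques of $G$, into a model of the theory of $R_{\infty}$ as induced subhypergraphs. We work in a monster model of the $\mathrm{NSOP}_{1}$ theory $T$ and use the Kaplan--Ramsey theory of Kim-independence $\ind^{K}$ over models: Kim's lemma, symmetry, and the independence theorem.

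\textbf{Step 1: a local witness.} First I would convert $\mathrm{MSOP}_{1}$ into a two-point witness over a model, in the manner of the usual characterization of $\mathrm{SOP}_{1}$ by Kim-dividing. Concretely, I aim to find a model $M$, a global $M$-finitely satisfiable type $q$, and $c_{0}, c_{1}$ with the following properties:
\begin{itemize}
\item $c_{0} \equiv_{M} c_{1}$ and $c_{0} \ind^{K}_{M} c_{1}$;
\item $\{c_{0}, c_{1}\}$ is not an $R_{2}$-edge;
\item every Morley sequence in $q$ over $M$ starting from a realization of $\tp(c_{0}/M)$ is an $R_{n}$-clique for every $n$.
\end{itemize}
To get this, I would extract from the $\mathrm{MSOP}_{1}$ tree an $M$-indiscernible ``comb''. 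The positive (clique) side comes from the branch condition. The non-edge between $c_{0}$ and $c_{1}$ comes from the $2$-anticlique condition $\{b_{\eta_{2}}, b_{\eta_{1}\smallfrown\langle 1\rangle}\}$, exactly as in the Kaplan--Ramsey argument that $\mathrm{SOP}_{1}$ yields a formula Kim-dividing along one Morley sequence while being consistent along another. Kim's lemma then upgrades ``some Morley sequence in $q$ is a clique'' to ``every Morley sequence in $q$ is a clique''.

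\textbf{Step 2: inductive amalgamation.} Next I would realize a finite graph $G$ on vertices $1, \dots, k$ one vertex at a time. Suppose $v_{1}, \dots, v_{k}$ are already realized, all realizing $\tp(c_{0}/M)$ and pairwise Kim-independent over $M$. We need $v_{k+1}$ satisfying two pairwise requirements:
\begin{itemize}
\item for each $i$ adjacent to $k+1$ in $G$, the pair $v_{i}v_{k+1}$ has the type of the first two terms of a Morley sequence in $q$;
\item for each non-adjacent $i$, the pair $v_{i}v_{k+1}$ has the type $\tp(c_{0}c_{1}/M)$.
\end{itemize}
Both pair types are Kim-independent over $M$, so the independence theorem for $\ind^{K}$, applied successively to $v_{1}, \dots, v_{k}$, amalgamates them into a single $v_{k+1} \ind^{K}_{M} v_{1}\dots v_{k}$. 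This already makes $R_{2}$ on $\{v_{i}\}$ equal to the edge relation of $G$. It also forces the $R_{n}$-edges to lie among the $R_{2}$-cliques, because every $R_{n}$-edge is an $R_{2}$-clique by the hypergraph-sequence axioms.

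\textbf{Step 3: the main obstacle.} The key difficulty is the converse direction: every $G$-clique $\{v_{i_{1}}, \dots, v_{i_{n}}\}$ must be an $R_{n}$-edge. Pairwise amalgamation does not control higher-arity relations. To handle this, I would strengthen the inductive hypothesis, as in the passage from (1) to (1$'$) in Appendix \ref{formula complexity sop1 sop2}: every $G$-clique among the $v_{i}$ should have the same quantifier-free type over $M$ in the $R_{n}$ as an initial segment of a Morley sequence in $q$. When adding $v_{k+1}$, I would apply the independence theorem to the full types of the cliques of the neighbourhood of $k+1$, rather than only to pairs. For each maximal clique $C$ in that neighbourhood, the type to be imposed is $\tp(C\, d/M)$ with $d \models q|_{MC}$; this is legitimate since $d \ind^{K}_{M} C$ for $q$ a coheir. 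The compatibility needed to amalgamate the types over overlapping cliques is exactly where the coheir invariance statement of the Claim in Appendix \ref{formula complexity sop1 sop2} enters, since $q$ cannot distinguish tuples with equal $R_{n}$-quantifier-free type over $M$. Once this is in place, Kim's lemma gives that each clique is an $R_{n}$-edge. Compactness over all finite $G$ then yields the embedding of $R'_{\infty}$, so $R_{\infty}$ has $\mathrm{MPM}^{(2)}$.
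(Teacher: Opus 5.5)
Steps 1 and 2 follow the paper's strategy: a Kim's-lemma witness of $\mathrm{MSOP}_1$ over a model $M$, then adding vertices one at a time with the independence theorem for $\ind^K$. Step 3, where the real work lies, has a genuine gap.

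First, you cannot amalgamate the types $\tp(Cd/M)$ over overlapping maximal cliques. The Kaplan--Ramsey independence theorem amalgamates types over $MA$ and $MB$ only when $A \ind^K_M B$ with $M$ a model. Two cliques $C, C'$ sharing a vertex $v \notin M$ are never Kim-independent over $M$, since the formula $x = v$ Kim-divides over $M$. The coheir claim of Appendix \ref{formula complexity sop1 sop2} does not help here. It only says the types agree on the overlap, which is automatic because they are restrictions of the single global $q$; it produces no amalgam. The natural repair, which is what the paper does, is to demand $v_{k+1} \models q|_{MN}$ for the whole neighbourhood $N$ at once. Then every clique of $N \cup \{v_{k+1}\}$, listed in increasing order, is an actual Morley sequence in $q$, and the quantifier-free weakening in your inductive hypothesis becomes unnecessary.

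The second problem is more serious. After this repair you must amalgamate $q|_{MN}$ with the non-edge types over each non-neighbour $v_{i_1}, \dots, v_{i_\ell}$. This requires $N, v_{i_1}, \dots, v_{i_\ell}$ to form a Kim-independent sequence over $M$, for an arbitrary split of $\{1,\dots,k\}$ into neighbours and non-neighbours. Your inductive hypothesis does not supply this. Pairwise Kim-independence is too weak, and so is the sequential $v_j \ind^K_M v_{<j}$ that your construction actually produces. For example, if vertex $4$ is adjacent to $1$ and $3$ but not to $2$, you need $v_1 v_3 \ind^K_M v_2$. That does not in general follow from $v_2 \ind^K_M v_1$ and $v_3 \ind^K_M v_1 v_2$, because $\ind^K$ satisfies neither base monotonicity nor transitivity.

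The missing idea is the paper's: carry mutually indiscernible $M$-invariant Morley sequences $I_1, \dots, I_k$, in a fixed invariant type $r$ extending the common restriction to $M$, rather than single points.
\begin{itemize}
\item By symmetry and the chain condition, $I_i \ind^K_M I_{\neq i}$ for each $i$. By monotonicity, $\{I_i\}_{i\in S}, I_{i_1}, \dots, I_{i_\ell}$ is then a Kim-independent sequence for every $S$.
\item The chain condition also yields Kim-nonforking types over $M\{I_i\}_{i \in S}$ and over each $MI_i$ with $i \notin S$. These are realized by Morley sequences in $r$ whose terms realize the clique type over all neighbours jointly, respectively the anticlique type over $MI_i$.
\item The independence theorem amalgamates these into $I_{k+1}$.
\item Extracting mutually indiscernible sequences then restores the inductive hypothesis.
\end{itemize}
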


\begin{proof}

We freely make use of facts about Kim-independence in $\mathrm{NSOP}_{1}$ theories; see \cite{kaplanramsey2017kim} for details. As in the argument that $\mathrm{MSOP}_{1}$ implies $\mathrm{MSOP}_{3}$ or $\mathrm{MTP}_{2}$ in an $\mathrm{NSOP}_{1}$ theory, by $\mathrm{MSOP}_{1}$ there is a model $M$, and $M$-invariant Morley sequences $\{a_{i}\}_{i < \omega}$ in the $M$-invariant type $p(x)$, $\{b_{i}\}_{i < \omega}$ in the $M$-invariant type $q(x)$, $p(x)|_{M}=q(x)|_{M}$ with $\{a_{i}\}_{i < \omega}$ a clique for each of the $R_{n}$, and $\{b_{i}\}_{i < \omega}$ an anticlique for each of the $R_{n}$ for $n \geq 2$ (the ``Kim's lemma" characterization of $\mathrm{MSOP}_{1}$). Let $(\upsilon, \{\rho_{n}\}_{n \leq |\upsilon|})$ be sequence of hypergraphs on the finitely many vertices $\upsilon=\{v_{1}, \ldots, v_{|\upsilon|}\}$ such that the $\rho_{n}$-edges are exactly the $\rho_{2}$-cliques; it suffices to show $(V,\{R_{n}\}_{n < \omega})$ embeds $(\upsilon, \{\rho_{n}\}_{n < \omega})$ as a sequence of induced subhypergraphs. We will show by induction that, for $k \leq |v|$, there are $M$-mutually indiscernible\footnote{i.e, each $I_{i}$ is indiscernible over $MI_{\neq i}$, Definition 1.1 of \cite{chernikov2014theories}} invariant Morley sequences $I_{1}, \ldots I_{k}$ over $M$ such that, for some/any choice of $a_{i} \in I_{i}$, the sequence $a_{1}, \ldots, a_{k}$ is such that $a_{i} \mapsto v_{i}$ is an embedding as a sequence of induced subhypergraphs, and every subsequence of the $a_{1}, \ldots, a_{k}$ which forms an $R_{n}$-edge/$R_{2}$-clique starts an $M$-invariant Morley sequence in $p(x)$; once we have found $I_{1}, \ldots I_{|\upsilon|}$, we will have obtained an even stronger version of our desired instance of $(\upsilon, \{\rho_{n}\}_{n < \omega})$.

Let $k < |\upsilon|$ and suppose $I_{1}, \ldots, I_{k}$ as above are already constructed; we show that we can add $I_{k+1}$. Let $S = \{i \leq k: \rho_{2}(v_{k+1}, v_{i})\}$. It suffices to find an $M$-invariant Morley sequence $I_{k+1}$ such that $I_{1}, \ldots I_{k+1}$ are mutually indiscernible, each term of $I_{k+1}$ satisfies $p(x)|_{M\{I_{i}\}_{i \in S}}$, and for each $i \notin S$, each term of $I_{k+1}$ satisfies $q(x)|_{MI_{i}}$. Fix some arbitrary choice of $M$-invariant type $r(x)$ extending $p(x)|_{M} = q(x)|_{M}$. Note that $p(x)$, as an $M$-invariant type, does not Kim-fork over $M$. So by symmetry and the chain condition for Kim-independence in $\mathrm{NSOP}_{1}$ theories, there is a type $\tilde{p}(x)\in S(M\{I_{i}\}_{i \in S})$, which does not Kim-fork over $M$, realized by $M$-invariant Morley sequences in $r(x)$ each term of which satisfies $p(x)|_{M\{I_{i}\}_{i \in S}}$. Moreover, for the same reasons applied to $q(x)$, for each $i \in \{1, \ldots, k\}\backslash S$ there is a type $\tilde{q}_{i}(x) \in S(MI_{i})$, which also does not Kim-fork over $M$, realized by $M$-invariant Morley sequences in $r(x)$ each term of which satisfies $q(x)|_{MI_{i}}$. Now because each $I_{i}$ for $i \leq k$ is an $M$-invariant Morley sequence indiscernible over $I_{\neq i}$, by symmetry and the chain condition for Kim-independence, $I_{i} \ind_{M}^{K} I_{\neq i}$. In particular, for $i_{1} < \ldots <i_{\ell}$ the increasing enumeration of $\{1, \ldots, k\} \backslash S$, $\{I_{i}\}_{i \in S}, I_{i_{1}}, \ldots, I_{i_{j}}, \ldots I_{i_{\ell}} $ form a Kim-independent sequence over $M$: for $j \leq \ell$, $I_{i_{j}} \ind_{M}^{K} \{I_{i}\}_{i \in S}I_{i_{1}}, \ldots I_{i_{j-1}}$. So by repeated applications of the independence theorem for Kim-independence, we can find $I_{k+1} \models \tilde{p}(x) \cup \bigcup_{i \leq \ell} \tilde{q}_{i_{\ell}}(x)$, and extracting mutually indiscernible sequences using Lemma 1.2.1 of \cite{chernikov2014theories}\footnote{Note that the statement of this lemma in that paper technically applies to sets of sequences of a certain infinite length; this would not be needed in our case, because the conditions we would like our mutually indiscernible sequences to satisfy are type-definable. But to apply that lemma as stated, we could just use compactness to find sequences of the correct length satisfying these conditions, then apply the lemma to those sequences.}, we can assume $I_{1}, \ldots, I_{k+1}$ are mutually indiscernible, as desired.

\end{proof}

Note that Example 3.9 from \cite{mutchnik2023properties}, the model companion of the theory of triangle-free tripartite graphs, which was given as an example of an $\mathrm{NSOP}_{4}$ theory where not every internally $\mathrm{NSOP}_{1}$ type is co-$\mathrm{NSOP}_{1}$, is $\mathrm{NPM}^{(3)}$ (i.e., satisfies the negation of $\mathrm{PM}^{(3)}$, where $\mathrm{PM}^{(3)}$ means that a theory satisfies all of the positively straightly definable properties defined by positive patterns $(\mathcal{C}, \mathcal{I}$) where for an inconsistency condition $(Z^{+}, \emptyset) \in \mathcal{I}$, $|Z^{+}|=3$, as in Definition 6.2 of \cite{bailetti2024walk}; $\mathrm{PM}^{(k)}$ is defined similarly.) So the proposition we just proved about $\mathrm{NSOP}_{1}$ types being co-$\mathrm{NSOP}_{1}$ in $\mathrm{NPM}^{(2)}$ theories really does require $\mathrm{NPM}^{(2)}$ in the strict sense, and does not extend to $\mathrm{NPM}^{(k)}$ for $k > 2$.
\bibliographystyle{apalike}
\bibliography{bibliography}

\end{document}